\DeclareMathOperator{\NVT}{NV_t}
\DeclareMathOperator{\NV}{NV_0}
\DeclareMathOperator{\A}{A}
\DeclareMathOperator{\Leb}{Leb}
\newcommand{\dist}{\textrm{dist}}
\newcommand{\dep}{\textrm{dep}}
\newcommand{\ess}{\textrm{ess}}
\newcommand{\supp}{\textrm{supp}}
\newcommand{\R}{\mathbb{R}}
\newcommand{\eps}{\varepsilon}
\newcommand{\Crit}{\mathcal{C}}
\newcommand{\tB}{\widetilde{B}}
\newcommand{\SC}{\textrm{SC}}
\newcommand{\PR}{\textrm{PR}}
\newcommand{\CV}{\textrm{CV}}
\newcommand{\sC}{\mathscr{C}}
\newcommand{\hS}{\widehat{S}}
\newcommand{\tS}{\widetilde{S}}
\newtheorem{lem}{Lemma}[section]
\newtheorem{defi}{Definition}[section]
\newtheorem{pro}{Proposition}[section]
\newtheorem{cor}{Corollary}[section]
\newtheorem*{coro}{Corollary}
\newtheorem*{mainthm}{Main Theorem}
\newtheorem*{reducedmain}{Reduced Main Theorem}
\numberwithin{equation}{section}
\begin{document}
\title{Summability implies Collet-Eckmann almost surely}
\author{Bing Gao and Weixiao Shen}
\date{\today}
\address{Bing Gao, Block S17, 10 Lower Kent Ridge Road, Singapore~119076, Singapore}
\email{g0901232@nus.edu.sg}
\address{Weixiao Shen, Block S17, 10 Lower Kent Ridge Road, Singapore~119076, Singapore}
\email{matsw@nus.edu.sg}
\maketitle

\begin{abstract}
We provide a strengthened version of the famous Jakobson's theorem. Consider an interval map $f$ satisfying a summability condition. For a generic one-parameter family $f_t$ of maps with $f_0=f$, we prove that $t=0$ is a Lebesgue density point of the set of parameters for which $f_t$
satisfies both the Collet-Eckmann condition and a strong polynomial recurrence condition.
\end{abstract}

\section{Introduction}
The famous result of Jakobson~\cite{J} states that maps with stochastic behavior are abundant, in the probabilistic sense,
in one-dimensional dynamics, which opened the way to much progress in non-uniformly expanding dynamics.
Several alternative proofs and generalizations of this result were obtained in subsequent works, see~\cite{BC1, BC2, R, TTY, T, V, Lu, Ly1, Yo, AM05, WT} among others.
In this paper, we shall provide another generalization of this result.

To state our result, we start with some definitions. Let $f:[0,1]\to [0,1]$ be a $C^1$ map and let $\Crit(f)$ denote the set of critical points of $f$.
We say that $f$ satisfies {\em the summability condition } (abbreviated ($\SC$)), if for each $c\in \Crit(f)$,
we have
$$\sum\limits_{n=0}^{\infty}\frac{1}{|Df^n(f(c))|}<\infty.$$
We say $f$ satisfies {\em the Collet-Eckmann condition } (abbreviated (CE)), if for each $c\in \Crit(f)$, we have
$$\liminf\limits_{n\to\infty}\frac{1}{n}\log|Df^n(f(c))|>0.$$
Furthermore, we say $f$ satisfies the {\em polynomial recurrence condition of exponent $\beta$} (abbreviated ($\PR_\beta$)), if there exists $C>0$ such that for any $c, c'\in\Crit(f)$ and any $n\ge 1$, we have
$$\dist (f^n(c), c')\ge Cn^{-\beta}.$$
If for each $\beta>1$, $f$ satisfies $\PR_{\beta}$, then we say that $f$ satisfies the {\em strong polynomial recurrence condition} (abbreviated (SPR)).

Let $\mathcal{A}$ be the collection of $C^1$ interval maps $f: [0,1]\to [0,1]$ with the following properties:
\begin{itemize}
\item $f$ has no attracting or neutral periodic orbits;
\item each critical point of $f$ lies in the interior $(0,1)$;
\item $f$ is $C^3$ outside $\Crit(f)$;
\item for each critical point $c$, there exist $\ell>1$ and a $C^3$ diffeomorphism $\varphi: \R\to \R$ such that $\varphi(c)=0$
    and such that $|f(x)-f(c)|= |\varphi(x)-\varphi(c)|^\ell$ holds near $c$.
\end{itemize}

Consider a one-parameter $C^1$ family $f_t: [0,1]\to [0,1]$, $t\in [-\delta,\delta]$, with $f_0\in\mathcal{A}$. We say that this family is {\em regular}
if the following hold:
\begin{enumerate}
\item The map $(t,x)\mapsto f_t(x)$ is $C^2$ on $\{(t,x)\in [-\delta,\delta]\times [0,1]: f_t'(x)\not=0\}$.
\item There exist $C^2$ functions $c_i: [-\delta,\delta]\to (0,1)$, $i=1,2,\ldots, d$, such that $0<c_1(t)<c_2(t)<\ldots <c_d(t)<1$ and $\mathcal{C}(f_t)=\{c_i(t):1\le i\le d\},$
\item For each $1\le i\le d$, there exist $\ell_i>1$, $\eps>0$ and a $C^2$ family $\varphi_t$ of diffeomorphisms of $\mathbb{R}$ such that $\varphi_t(c_i(t))=0$, and $|f_t(x)-f_t(c_i(t))|= |\varphi_t(x)-\varphi_t(c_i(t))|^{\ell_i}$ holds when $|x-c_i(t)|<\eps$ and $|t|\le \delta$. 
\end{enumerate}

It is easy to see that if $f_t:[0,1]\to [0,1]$, $t\in [-1,1]$, is a $C^3$  family such that $f_0\in\mathcal{A}$ has only non-degenerate critical points, then for $\delta>0$ small enough, $\{f_t\}_{|t|<\delta}$ is a regular family. Besides, if $f_t, t\in [-1,1]$, is a real analytic family such that all the maps $f_t$ have the same number of critical points, and the corresponding critical points have the same order, then $f_t$ is regular.

For a measurable subset $X$ of $\mathbb{R}^n$, let $\Leb_n(x)$ denote the Lebesgue  measure of $X$ in $\mathbb{R}^n$.
For simplify, let $|X|$ denote the $\Leb_1(X)$ for the measurable set $X\subset \mathbb{R}$.

\begin{mainthm}
Consider a regular one-parameter family $f_t:[0,1]\to [0,1]$, $t\in [-1,1]$ and denote $F(x, t)=f_t(x)$. Assume
\begin{itemize}
\item $f_0$ satisfies the summability condition (SC);
\item the following non-degeneracy condition holds for $t=0$.
$$(\NVT) \qquad\sum\limits_{j=0}^{\infty}\frac{\partial_tF(f_t^j(c),t)}{Df_t^j(f_t(c))}\neq 0
\text{ for any critical point $c\in\mathcal{C}(f_t)$}.$$
\end{itemize}
Define $$\mathscr{Z}:=\{t\in [-1,1]:f_t \text{ satisfies (CE), (SPR) and ($NV_t$)}\}.$$
Then we have
$$\lim\limits_{\varepsilon\to 0}\frac{\left|[-\eps,\eps]\cap \mathscr{Z}\right|}{2\varepsilon}=1.$$
In particular, $|\mathscr{Z}|>0$.
\end{mainthm}

Like most of the approaches to the Jakobson's theorem, our proof is purely real analytic. Comparing to the previous works, our assumption on $f_0$ is much weaker and the result on strong polynomial recurrence condition is new. Previously the weakest assumption was given in~\cite{T}, where $f_0$ satisfies (CE) and the critical points are at most sub-exponentially recurrent. Our analysis on the phase space geometry is based on the recent work~\cite{S} by the second author, and these estimates  are transformed to the parameter space by modifying the argument in~\cite{T}.

For the family of real quadratic polynomials, our theorem is implicitly contained in~\cite{AM05}, where complex method developed in~\cite{Ly1} was applied to relate the phase and parameter spaces.
The complex method is powerful for uni-critical maps, but does not work for multimodal maps.

The non-degeneracy condition ($\NVT$) was introduced in \cite{T}. In~\cite{AM03}, a geometric interpretation of this condition was given: for a real analytic family $f_t$ of unimodal maps for which $f_0$ satisfies ($\SC$), $(\NVT)$ holds at $t=0$ if and only $f_t$ is transversal to the topological conjugacy class of $f_0$. In~\cite{Le1} and~\cite{A}, it was proved that for the family of quadratic maps $Q_t(z)=z^2+t$, if $Q_{t_0}$ satisfies ($\SC$) then the condition $(\NVT)$ automatically holds at $t=t_0$. By~\cite{Ly2}, for almost every $t\in\R$, $Q_t$ is either uniformly hyperbolic or satisfies ($\SC$). Thus our theorem gives a new proof of Theorem A and a part of Theorem B in~\cite{AM05}.

Recently this transversality result has been generalized to higher degree polynomials in~\cite{Le2}. With this result, we can extend our Main Theorem to the high dimensional version. More precisely, for any integer $n\in\mathbb{N}$, let $\textbf{a}=(a_1,a_2,\cdots,a_n)\in\mathbb{C}^n$ and $P_{\textbf{a}}=\sum\limits_{i=1}^n a_i x^i+\Big(1-\sum\limits_{i=1}^n a_i\Big) x^{n+1}$. Hence, $P_{\textbf{a}}(0)=0$ and $P_{\textbf{a}}(1)=1$.

Let $\mathcal{P}$ be the collection of polynomial maps $P$ with the following properties:
\begin{itemize}
\item $P([0,1])\subset [0,1]$;
\item $P|_{[0,1]}\in\mathcal{A}$ and $P|_{[0,1]}$ satisfies (SC).
\end{itemize}

\begin{coro}
Fix $n\in\mathbb{N}$, we define parameter sets $$\Lambda=\{\textbf{a}\in\mathbb{R}^n: P_{\textbf{a}}\in\mathcal{P}\},$$ and
$$\Lambda_0=\left\{\textbf{a}\in\Lambda:P_{\textbf{a}}|_{[0,1]}\text{ satisfies (CE) and (SPR) conditions }\right\}.$$
Then we have $\Leb_n(\Lambda\backslash \Lambda_0)=0$.
\end{coro}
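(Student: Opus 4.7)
The approach is to slice $\Lambda$ by a generic family of parallel lines and apply the one-parameter Main Theorem on each slice. For $\mathbf{a}\in\Lambda$ and $c\in\Crit(P_{\mathbf{a}})$, define the transversality vector
$$V(\mathbf{a},c)\;=\;\Bigl(\sum_{j=0}^{\infty}\frac{\partial_{a_i}P_{\mathbf{a}}(P_{\mathbf{a}}^j(c))}{DP_{\mathbf{a}}^j(P_{\mathbf{a}}(c))}\Bigr)_{i=1}^n\in\R^n,$$
which converges absolutely by (SC). For a direction $\mathbf{u}\in\R^n$, the restricted one-parameter family $P_{\mathbf{a}+t\mathbf{u}}$ satisfies $(NV_0)$ at $t=0$ precisely when $\mathbf{u}\cdot V(\mathbf{a},c)\neq 0$ for every $c\in\Crit(P_{\mathbf{a}})$. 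The generalization of the transversality theorem cited from \cite{Le2} in the introduction delivers $V(\mathbf{a},c)\neq 0$ for every $\mathbf{a}\in\Lambda$ and every critical point $c$.

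Next comes a two-fold Fubini argument. For each $\mathbf{a}\in\Lambda$ the set of bad directions $H(\mathbf{a})=\bigcup_{c}\{\mathbf{u}\in\R^n:\mathbf{u}\cdot V(\mathbf{a},c)=0\}$ is a finite union of hyperplanes and hence $\Leb_n(H(\mathbf{a}))=0$. Applying Fubini to $\{(\mathbf{a},\mathbf{u})\in\Lambda\times\R^n:\mathbf{u}\in H(\mathbf{a})\}$, one selects a unit vector $\mathbf{u}$ for which $B_{\mathbf{u}}:=\{\mathbf{a}\in\Lambda:\mathbf{u}\in H(\mathbf{a})\}$ satisfies $\Leb_n(B_{\mathbf{u}})=0$. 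Writing $\ell_{\mathbf{b}}=\{\mathbf{b}+t\mathbf{u}:t\in\R\}$ for $\mathbf{b}\in\mathbf{u}^\perp$, a second use of Fubini gives $\Leb_1(\ell_{\mathbf{b}}\cap B_{\mathbf{u}})=0$ for $\Leb_{n-1}$-almost every $\mathbf{b}\in\mathbf{u}^\perp$.

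Fix such a generic $\mathbf{b}$ and consider the one-parameter real-analytic family $f_t:=P_{\mathbf{b}+t\mathbf{u}}$. After discarding a further null set of $\mathbf{b}$, this family is regular in the sense defined above on a neighbourhood of each $t_0$ with $f_{t_0}\in\mathcal{P}$, because the locus where the critical multiplicities of $P_{\mathbf{a}}$ jump is a proper real-analytic subvariety of $\R^n$ and so has measure zero on a generic slice. At every $t_0$ with $\mathbf{b}+t_0\mathbf{u}\in\Lambda\setminus B_{\mathbf{u}}$, both (SC) and the non-degeneracy $(NV_{t_0})$ hold for $f_t$, so the Main Theorem, recentered at $t_0$, ensures that $t_0$ is a Lebesgue density point of $\{t:\mathbf{b}+t\mathbf{u}\in\Lambda_0\}$. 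Since $\Lambda_0\subset\Lambda$ and almost every $t_0\in\ell_{\mathbf{b}}\cap\Lambda$ lies outside $B_{\mathbf{u}}$, the Lebesgue density theorem forces $\Leb_1(\ell_{\mathbf{b}}\cap(\Lambda\setminus\Lambda_0))=0$, and a final Fubini integration in $\mathbf{b}$ produces $\Leb_n(\Lambda\setminus\Lambda_0)=0$. The principal obstacle is verifying that the result of \cite{Le2} supplies precisely the non-vanishing $V(\mathbf{a},c)\neq 0$ in the coordinates $(a_1,\ldots,a_n)$ used here, rather than a weaker or differently-phrased transversality claim; once this is confirmed, the remainder is standard Fubini and Lebesgue-density bookkeeping.
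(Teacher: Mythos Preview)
Your parallel-slicing argument is correct and genuinely different from the paper's route. The paper instead works \emph{radially}: for each $\mathbf{a}_*\in\Lambda\setminus\Lambda_1$ (where $\Lambda_1$ is the discriminant locus of degenerate critical points, disposed of first) it shows that for almost every direction $\mathbf{u}\in S^{n-1}$ the one-parameter family $t\mapsto P_{\mathbf{a}_*+t\mathbf{u}}$ satisfies $(\mathrm{NV}_0)$, applies the Main Theorem, and concludes that $\mathbf{a}_*$ is a density point of $\Lambda_0$ along almost every line through $\mathbf{a}_*$; it then invokes an external measure-theoretic lemma (Proposition~5.2 of \cite{AM03}) to pass from ``linear density along a.e.\ direction at every point'' to $\Leb_n(\Lambda\setminus\Lambda_0)=0$. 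Your approach trades that citation for a cleaner double Fubini: one application to choose a single good direction $\mathbf{u}$, and a second to integrate the one-dimensional conclusion over $\mathbf{u}^\perp$. This is more elementary and self-contained; the paper's version has the minor advantage that it never needs to worry about whether the critical-point configuration is constant along a fixed line, since it only looks infinitesimally at each $\mathbf{a}_*$.

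On the obstacle you flag: it is real, and the paper resolves it in two steps. Levin's theorem in \cite{Le2} asserts that the $r\times n$ matrix $\mathbf{L}=(L(c_j,v_k))$ of limits computed with respect to the \emph{critical-value} coordinates $(v_1,\ldots,v_n)$ has rank $r$. A separate result (Proposition~1 of \cite{Le3}) says that on the non-degenerate locus $\Pi$ the map $\mathbf{a}\mapsto(v_1(\mathbf{a}),\ldots,v_n(\mathbf{a}))$ is a local analytic diffeomorphism, so the Jacobian $\partial v/\partial a$ is invertible and the matrix $\widehat{\mathbf{L}}=(L(c_j,a_k))$ in the $a$-coordinates also has rank $r$. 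In particular every row of $\widehat{\mathbf{L}}$ is nonzero, which is exactly your $V(\mathbf{a},c)\neq 0$ for $\mathbf{a}\in\Lambda\setminus\Lambda_1$. You should state this chain explicitly, and restrict your non-vanishing claim to $\Lambda\setminus\Lambda_1$ rather than all of $\Lambda$; since $\Lambda_1$ is a proper algebraic set it has measure zero and can be absorbed into $B_{\mathbf{u}}$ without affecting the rest of your argument.
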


\begin{proof}
Consider parameter set $\Lambda_1=\{\textbf{a}\in\mathbb{R}^n: P_{\textbf{a}}\text{ has degenerate critical points}\}$.
For any $\textbf{a}\in\Lambda_1$, the discriminant $\Delta(\textbf{a})$ of $P_{\textbf{a}}'$ is equal to zero. Since $\Delta(\textbf{a})$ is a polynomial in $\textbf{a}$, the set $\Lambda_1$ has codimension one in $\R^n$, hence $\Leb_n(\Lambda_1)=0$.

Define 
$$\Pi=\left\{\textbf{a}\in\mathbb{C}^n: \text{ all critical points of $P_{\textbf{a}}$ are non-degenerate}\right\}.$$
Fix $\textbf{a}_*\in \Pi$. For $\textbf{a}$ in a small neighborhood of $\textbf{a}_*$, the critical points of $P_{\textbf{a}}$   $c_1(\textbf{a}),c_2(\textbf{a}),\cdots,c_n(\textbf{a})$ depend on $\textbf{a}$ analytically.
Letting $v_j(\textbf{a})=P_{\textbf{a}}(c_j(\textbf{a}))$ for $j=1,2,\cdots,n$, by Proposition $1$ in~\cite{Le3}, $\{v_1(\textbf{a}),v_2(\textbf{a}), \cdots,v_n(\textbf{a})\}$ is a local analytic coordinate.

Now let $\textbf{a}_*\in\Lambda\backslash\Lambda_1$.
Suppose $c_1,c_2,\cdots,c_r$ be the all critical points of $P_{\textbf{a}_*}$ in $(0,1)$. By Theorem 1 in~\cite{Le2}, the rank of matrix
$$\textbf{L}=(L(c_j,v_k))_{1\leq j\leq r,1\leq k \leq n}$$ is equal to $r$,
where
$$L(c_j,v_k):=\lim\limits_{m\to \infty}\frac{\frac{\partial P_{\textbf{a}}^m(c_j)}{\partial v_k}\big|_{\textbf{a}=\textbf{a}_*}}{(P_{\textbf{a}_*}^{m-1})'(P_{\textbf{a}_*}(c_j))}.$$
Notice that $\{a_1,a_2,\cdots,a_k\}$ is a globe analytic coordinate, then we define
$$L(c_j,a_k):=\lim\limits_{m\to \infty}\frac{\frac{\partial P_{\textbf{a}}^m(c_j)}{\partial a_k}\big|_{\textbf{a}=\textbf{a}_*}}{(P_{\textbf{a}_*}^{m-1})'(P_{\textbf{a}_*}(c_j))}.$$
Hence, the rank of matrix $$\widehat{\textbf{L}}=(L(c_j,a_k))_{1\leq j\leq r, 1\leq k\leq n}$$ is equal to $r$ and all entries of $\widehat{\textbf{L}}$ are real numbers.

For any direction $\textbf{u}\in S^{n-1}$, let $F^{(\textbf{u})}(x,t):=P_{\textbf{a}_*+t\textbf{u}}(x)$,
then we have
$$\sum\limits_{m=0}^{\infty}\frac{\partial_t F^{(\textbf{u})}(P_{\textbf{a}_*}^m(c_j),0)}{DP_{\textbf{a}_*}^m(P_{\textbf{a}_*}(c_j))}=
(L(c_j,a_1),L(c_j,a_2),\cdots,L(c_j,a_k))\cdot \textbf{u}.$$ Thus, ($\NV$) condition holds for
$F^{(\textbf{u})}(x,t)$ if and only if all entries of $\widehat{\textbf{L}}\cdot\textbf{u}$ are nonzero. Since the rank of matrix $\widehat{\textbf{L}}$ is equal to $r$, all rows of matrix $\widehat{\textbf{L}}$ are nonzero. If the $k$-th entry of $\widehat{\textbf{L}}\cdot\textbf{u}$ is equal to $0$, then $\textbf{u}$ is contained in the intersection of hyperplane in $\mathbb{R}^n$ and
$S^{n-1}$. Thus, for almost all $\textbf{u}$ in $S^{n-1}$ (endowed with the Lebesgue measure on $S^{n-1}$), all entries of $\widehat{\textbf{L}}\cdot\textbf{u}$ are nonzero.

Hence, for almost every direction $\textbf{u}$ in $S^{n-1}$, $(\NV)$ condition holds for one-parameter family $F^{(\textbf{u})}(x,t)$. Together with our Main Theorem, it follows that $\textbf{a}_*$ is a density point of set $\Lambda_0$ along line $\textbf{a}_*+ t \textbf{u}$. By proposition 5.2 in~\cite{AM03}, $\Leb_n((\Lambda\backslash\Lambda_1)\backslash\Lambda_0)=0$. Then the statement follows.

\end{proof}


The paper is organized as follows. In \S~\ref{sec:reduction}, we state a Reduced Main Theorem from which we deduce the Main Theorem. The rest of the paper is devoted to the proof of the Reduced Main Theorem. As described by Adrien Douady, the proof consists of two steps: in \S~\ref{sec:plough} we ``plough in the phase space'' and in \S~\ref{sec:harvest} we ``harvest in the parameter space''.

\noindent
{\bf Acknowledgment.} BG is supported a Research Scholarship from NUS and WS is supported by Research Grants R-146-000-128-133
and C-146-000-027-001 from NUS. We would like to thank G. Levin for helpful conversations on his non-degeneracy result.

\section{Reduction}\label{sec:reduction}

\subsection{Normalization}\label{subsec:regularfamliy}
A regular family $g_t:[0,1]\to [0,1]$, $t\in [-1, 1]$, 
is called {\em normalized} if the following hold:
\begin{enumerate}
\item[(i)] the maps $g_t$, $t\in [-1,1]$, all have the same critical points (denoted by $c_1, c_2, \ldots, c_d$);
\item[(ii)] there exists $\eps_*>0$ and for each $i\in \{1,2,\ldots,d\}$ there exists $\ell(c_i)>1$ such that $|g_t(x)-g_t(c_i)|=|x-c_i|^{\ell(c_i)}$ holds for all
$t\in[-1,1]$ and $x\in B(c_i,\eps_*)$;
\item[(iii)] $|\partial_t G(x,t)|\le 1$ for all $x\in [0,1]$ and $|t|\le 1$.
\end{enumerate}

To prove the Main Theorem, we only need to consider a normalized regular family. Indeed,
given any regular family $f_t: [0,1]\to [0,1]$, $t\in [-1,1]$, one can find a $C^2$ family $h_t$, $t\in [-1,1]$, of diffeomorphisms from $[0,1]$ onto itself, such that $\widetilde{f}_t=h_t\circ f_t\circ h_t^{-1}$ all have the same critical points and are normalized as in (ii).
Furthermore, take a small constant $\kappa$ and define $g_t= \widetilde{f}_{t\kappa}$. Then the family $G(x,t)=g_t(x)$, $t\in [-1,1]$, satisfies all the properties (i), (ii), (iii).
Note also that if $f_0$ satisfies (SC) then $g_0$ satisfies (SC); and if $F$ satisfies the non-degeneracy condition $(\NVT)$ at $t=0$, then so does $G$.

In the remaining of this paper, we assume that $F$ is a normalized regular family. Let $\Crit$ denote the common set of critical points of $f_t$, and let $$\ell_{\max}=\max\{\ell(c): c\in \Crit\}\text{ , }\ell_{min}=\min\{\ell(c):c\in\Crit\}.$$
Moreover, let $f=f_0$ and $\CV=f(\Crit)$.

\subsection{The (CE) and (PR) conditions}
For each $c\in\mathcal{C}$ and $\delta>0$, let
$$\widetilde{B}(c;\delta)=B(c,\delta^{1/\ell(c)}), \,\, D_c(\delta)=\frac{\delta}{|\tB(c;\delta)|}=\frac{1}{2}\delta^{1-\ell(c)^{-1}},$$
and let
$$\widetilde{B}(\delta)=\bigcup\limits_{c\in\mathcal{C}}\widetilde{B}(c;\delta).$$

The space $\{f_t\}_{t\in [-1,1]}$ is admissible in the sense of ~\cite{S}. Thus by~\cite[Theorem 1]{S}, we have the following:
\begin{pro}\label{prop:lemma-lambda}
For each $\varepsilon>0$ small enough,
there exist $\Lambda(\varepsilon)>1$ and $\alpha(\varepsilon)>0$ such that
$$\lim_{\varepsilon\to 0}\Lambda(\varepsilon)=\infty\text{, }\lim\limits_{\varepsilon\to 0}\alpha(\varepsilon)= 0$$
and the following hold for $|t|<\varepsilon$:
\begin{enumerate}
\item[(i)]Let $x\in[0,1]$ be such that $\dist(x,\CV)\leq 4\varepsilon$,
with $f_t^j(x)\notin\widetilde{B}(\varepsilon)$ for $j=0,1,\cdots,s-1$
and $f_t^s(x)\in\widetilde{B}(c;2\varepsilon)$ for some $c\in\mathcal{C}$. Then
\begin{equation}
|Df_t^s(x)|\geq \frac{\Lambda(\varepsilon)}{D_c(\varepsilon)}\exp(\eps^{\alpha(\varepsilon)}s)
\end{equation}
\item[(ii)] Let $x\in[0,1]$ be such that $f_t^j(x)\notin\widetilde{B}(\varepsilon)$ for $j=0,1,\cdots, s-1$, then
\begin{equation}
|Df_t^s(x)|\geq A \varepsilon^{1-\ell_{\max}^{-1}}\exp(\eps^{\alpha(\varepsilon)}s)
\end{equation}
where $A>0$ is a constant independent of $\varepsilon$.
\end{enumerate}
\end{pro}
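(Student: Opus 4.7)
The plan is to derive this proposition essentially as a direct appeal to Theorem 1 of \cite{S}, applied uniformly across the family $\{f_t\}_{|t|<\varepsilon}$. The first step is to verify that the normalized regular family $\{f_t\}_{t\in[-1,1]}$ belongs to the admissible class considered in \cite{S}. Since $f_0\in\mathcal{A}$ satisfies (SC), and normalization ensures that (i) all $f_t$ share the same critical set $\Crit$, (ii) each $f_t$ has the identical local normal form $|f_t(x)-f_t(c)|=|x-c|^{\ell(c)}$ on a fixed neighborhood of each $c$, and (iii) $\|\partial_t F\|_\infty\le 1$, the family is a $C^0$-small perturbation of $f_0$ that preserves the critical geometry exactly. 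Together with the absence of attracting or neutral cycles (which is inherited from $f_0\in\mathcal{A}$ and persists for $|t|$ small), these properties put $\{f_t\}_{|t|<\varepsilon}$ into the admissible framework of \cite{S}.

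The second step is to invoke \cite[Theorem 1]{S}. That theorem, for an admissible family whose base map satisfies (SC), produces for every sufficiently small $\varepsilon>0$ two quantities $\Lambda(\varepsilon)$ and $\alpha(\varepsilon)$ with $\Lambda(\varepsilon)\to\infty$ and $\alpha(\varepsilon)\to 0$ as $\varepsilon\to 0$, controlling the hyperbolic expansion along any orbit segment which remains outside the critical neighborhood $\tB(\varepsilon)$, with constants independent of $t$ in the range $|t|<\varepsilon$. Part (i) is then read off by applying this to an initial point $x$ that is $4\varepsilon$-close to $\CV$, whose orbit stays outside $\tB(\varepsilon)$ up to time $s-1$, and returns to some $\tB(c;2\varepsilon)$: the theorem yields precisely the inequality with the factor $\Lambda(\varepsilon)/D_c(\varepsilon)$ in front of the exponential, where the $D_c(\varepsilon)^{-1}$ term records the standard loss of derivative coming from the final close approach to $c$. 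Part (ii) is the analogous estimate for a ``free'' orbit segment that never enters $\tB(\varepsilon)$, in which case the theorem supplies a uniform lower bound of order $\varepsilon^{1-\ell_{\max}^{-1}}$ multiplied by the exponential expansion rate $\exp(\varepsilon^{\alpha(\varepsilon)}s)$.

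Since the content of the proposition is entirely contained in \cite{S}, the only real work is bookkeeping: one must check that the admissibility definition of \cite{S} is exactly matched by our three normalization conditions, and translate the conclusions into the notation $\tB(c;\delta)$, $D_c(\delta)$, $\tB(\delta)$ used here. The main (and essentially the only) obstacle is this dictionary between the two setups, which is straightforward because the normalization in \S\ref{subsec:regularfamliy} was designed with exactly this application in mind. The genuinely hard estimates — uniform hyperbolicity outside $\tB(\varepsilon)$ under nothing more than (SC) on $f_0$ — are used here as a black box.
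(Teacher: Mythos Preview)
Your proposal is correct and matches the paper's own treatment exactly: the paper does not give an independent proof of this proposition but simply notes that the normalized family $\{f_t\}_{t\in[-1,1]}$ is admissible in the sense of \cite{S} and then invokes \cite[Theorem~1]{S} as a black box. Your added detail about checking the admissibility hypotheses and translating the notation is precisely the bookkeeping the paper leaves implicit.
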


\noindent
{\bf Remark.} This result is our starting point to prove abundance of Collet-Eckmann parameters near a summable one.  The proof is based on decomposition of an $f_t$-orbit into pieces that can be shadowed by $f_0$-orbits and a delicate choice of the binding periods played an central role.

Let $\mathbb{N}=\{0,1,\ldots\}$ denote the set of natural numbers.
Define
\begin{equation}\label{eqn:qeps}
q_\eps(x)=
\inf\left\{k\in\mathbb{N}: x\not\in \bigcup_{c\in\Crit}\tB(c;e^{-k\ell(c)}\eps)\right\}.
\end{equation}
Note that for $x\in\tB(c;\eps)$ with $\eps>0$ small and $c\in\Crit$, we have
\begin{multline}\label{eqn:Dfq}
|Df_t(x)|=\ell(c)\dist(x,\Crit)^{\ell(c)-1}\ge \ell(c) \left(e^{-q_\eps(x)\ell(c)} \eps\right)^{1-\ell(c)^{-1}}\\
> e^{-q_\eps(x)(\ell(c)-1)}D_c(\eps) > e^{-q_\eps(x) \ell_{\max}} D_c(\eps).
\end{multline}
Thus the following is an immediate consequence of Proposition~\ref{prop:lemma-lambda}.
\begin{lem}\label{lem:Dfnq}
Provided that $\eps>0$ is small enough, the following holds: For any $y\in \tB(\eps)$, $t\in [-\eps,\eps]$, and $n\ge 1$,
putting
$$m=\#\{1\le k\le n: f_t^k(y)\in\tB(\eps)\},$$
we have
\begin{equation}
|Df_t^n(f_t(y))|\ge A\eps^{1-\ell_{\max}^{-1}} \Lambda(\eps)^m \exp \left(-\ell_{\max} \sum_{k=1}^n  q_\eps(f_t^k(y))\right) e^{\eps^{\alpha(\eps)} n}.
\end{equation}
Furthermore, if $f_t^n(y)\in \tB(\eps)$, then
\begin{equation}
|Df_t^n(f_t(y))|\ge \Lambda(\eps)^m \exp \left(-\ell_{\max} \sum_{k=1}^n  q_\eps(f_t^j(y))\right).
\end{equation}
\end{lem}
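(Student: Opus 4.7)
The proof will be a chain-rule decomposition of $|Df_t^n(f_t(y))|=\prod_{j=1}^n|Df_t(f_t^j(y))|$ along the returns of the orbit $y,f_t(y),\ldots,f_t^n(y)$ to $\tB(\eps)$, applying Proposition~\ref{prop:lemma-lambda}(i) between consecutive returns, the pointwise estimate~\eqref{eqn:Dfq} at each return, and Proposition~\ref{prop:lemma-lambda}(ii) to the tail whenever the final iterate lies outside $\tB(\eps)$.

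The preliminary observation---the only point needing more than bookkeeping---is that for $\eps$ sufficiently small, two returns cannot be consecutive. Since $f_0$ satisfies (SC), no critical value $f(c)$ can itself be a critical point (otherwise $Df^n(f(c))=0$ for $n\ge 1$, contradicting summability), so $\delta_0:=\min_{c,c'\in\Crit}|f(c)-c'|>0$. Using $|\partial_tF|\le 1$ together with $|t|\le\eps$ and the fact that $x\in\tB(c;\eps)$ forces $|f_t(x)-f(c)|\le 2\eps$, one sees $\dist(f_t(x),\Crit)\ge\delta_0-2\eps>\eps^{1/\ell_{\max}}$ once $\eps$ is small enough, so $f_t(x)\notin\tB(\eps)$. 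Consequently, if $1\le k_1<k_2<\cdots<k_m\le n$ enumerate the indices $k$ with $f_t^k(y)\in\tB(\eps)$ and we set $k_0:=0$, then $s_i:=k_i-k_{i-1}-1\ge 1$ for every $i$.

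With this separation in hand, set $x_i:=f_t^{k_{i-1}+1}(y)$ and let $c_i\in\Crit$ with $f_t^{k_i}(y)\in\tB(c_i;\eps)$. The same computation as above gives $\dist(x_i,\CV)\le 2\eps$, and by construction $f_t^j(x_i)\notin\tB(\eps)$ for $0\le j<s_i$ while $f_t^{s_i}(x_i)=f_t^{k_i}(y)\in\tB(c_i;2\eps)$. Then Proposition~\ref{prop:lemma-lambda}(i) yields $|Df_t^{s_i}(x_i)|\ge(\Lambda(\eps)/D_{c_i}(\eps))\exp(\eps^{\alpha(\eps)}s_i)$, while the intermediate form of~\eqref{eqn:Dfq} gives $|Df_t(f_t^{k_i}(y))|\ge 2\ell(c_i)\,e^{-q_\eps(f_t^{k_i}(y))\ell_{\max}}D_{c_i}(\eps)$, and the spare factor $2\ell(c_i)\ge 2\ell_{\min}$ absorbs $e^{\eps^{\alpha(\eps)}}$ once $\eps$ is small. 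Multiplying these two bounds, the $D_{c_i}(\eps)$ factors cancel and the $i$-th block contributes at least $\Lambda(\eps)\exp(\eps^{\alpha(\eps)}(k_i-k_{i-1}))\exp(-\ell_{\max}q_\eps(f_t^{k_i}(y)))$.

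Assembling the blocks and using $\sum_i(k_i-k_{i-1})=k_m$ together with $q_\eps\equiv 0$ off $\tB(\eps)$, the product over $i=1,\ldots,m$ will be at least $\Lambda(\eps)^m\exp(\eps^{\alpha(\eps)}k_m)\exp(-\ell_{\max}\sum_{k=1}^n q_\eps(f_t^k(y)))$. If $f_t^n(y)\in\tB(\eps)$ then $k_m=n$ and the trailing block is absent; since $\exp(\eps^{\alpha(\eps)}n)\ge 1$, the second asserted inequality follows at once. Otherwise $k_m<n$ and Proposition~\ref{prop:lemma-lambda}(ii), applied to $f_t^{k_m+1}(y)$ over the $n-k_m$ iterates that avoid $\tB(\eps)$ by maximality of $k_m$, supplies the extra factor $A\eps^{1-\ell_{\max}^{-1}}\exp(\eps^{\alpha(\eps)}(n-k_m))$, producing the first asserted inequality. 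The one substantive step is the preliminary separation of returns; without it, coincident returns would force an irrecoverable loss of order $\Lambda(\eps)/D_{c_i}(\eps)$ per coincident pair.
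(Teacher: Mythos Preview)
Your proof is correct and follows essentially the same approach as the paper: decompose $|Df_t^n(f_t(y))|$ along the return times to $\tB(\eps)$, apply Proposition~\ref{prop:lemma-lambda}(i) on each block between consecutive returns, use \eqref{eqn:Dfq} at each return, and apply Proposition~\ref{prop:lemma-lambda}(ii) on the tail when $f_t^n(y)\notin\tB(\eps)$. Your explicit verification that two returns cannot be adjacent (so that each block has length $s_i\ge 1$ and Proposition~\ref{prop:lemma-lambda}(i) actually applies) is a useful detail that the paper's terse proof leaves implicit; you might also note for completeness that when $f_t^n(y)\in\tB(\eps)$ the first asserted inequality follows from the second because $A\eps^{1-\ell_{\max}^{-1}}<1$ for small $\eps$.
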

\begin{proof}
Let $0=n_0<n_1<\cdots< n_m$ be all the integers in $\{0,1,\ldots, n\}$ such that $f_t^{n_j}(y)\in \tB(\eps)$.
Note that $\dist(f_t(y),\CV)\le 2\eps$. Applying Proposition~\ref{prop:lemma-lambda} (i) to obtain lower bounds for
$|Df_t^{n_{j+1}-n_j-1}(f_t^{n_j+1}(y))|$, $0\le j< m$, applying (ii) to obtain lower bounds for $|Df_t^{n-n_m-1} (f_t^{n_m+1}(y))|$ in the case $n_m<n$,
and applying (\ref{eqn:Dfq}) give us the desired inequalities.
\end{proof}
For $t\in[-1,1]$, $\varepsilon>0$ and $c\in\Crit$,
let $S_1^{(c)}(t;\varepsilon)<S_2^{(c)}(t;\varepsilon)<\cdots<S_n^{(c)}(t;\varepsilon)<\cdots$
be the all positive integers such that $f_t^{S_j^{(c)}(t;\varepsilon)+1}(c)\in\widetilde{B}(\varepsilon)$, and let
\begin{equation}\label{eqn:dfnd}
d^{(c)}_j(t;\varepsilon)=q_\eps (f_t^{S_j^{(c)}(t;\eps)+1}(c)).
\end{equation}

\noindent
{\bf Convention.} If $c$ returns to $\tB(\eps)$ at most $n-1$ times, then let $S_n^{(c)}(t;\eps)=\infty$ and
$d^{(c)}_n(t;\eps)=0.$

Given $C>0$, for each $n=1,2,\ldots,$ we define
\begin{equation}
\label{eqn:setXn}
X_{n,\varepsilon}(C)  =
\bigg\{t\in[-\eps,\eps]:\sum\limits_{j=1}^{k}d^{(c)}_j(t;\varepsilon)\leq C k
\text{ for any $k< n$ and } c\in\Crit\bigg\},
\end{equation}
and
\begin{equation}
X_{\varepsilon}(C)=\bigcap\limits_{n=1}^{\infty}X_{n,\varepsilon}(C).
\end{equation}

Given $C>0$ and $\tau>0$, for each $m=0,1,\ldots,$ we define
\begin{equation}
\label{eqn:setYn}
Y^m_\eps (C,\tau) =\left\{ t\in X_\varepsilon(C):
\dist (f_t^{k+1}(\Crit), c) \ge \frac{\varepsilon^{1/\ell(c)}}{ (k+1)^{\tau}} \textrm{ for } 0\le k< m \mbox{ and } c\in \Crit\right\}
\end{equation}
and

\begin{equation}
Y_\eps(C,\tau)=\bigcap\limits_{m=0}^{\infty}Y^m_{\eps}(C,\tau).
\end{equation}

\begin{lem}\label{lem:condCEPR}
\begin{enumerate}
\item[(i)] Given $C>0$, the following holds provided that $\eps>0$ is small enough: for any $t\in X_\eps(C)$, $f_t$ satisfies the condition (CE).
\item[(ii)] For any $C>0, \tau>1$ and $\alpha>0$, if $t\in Y_\eps(C,\tau)$ and $\eps>0$ is small enough, then $f_t$ satisfies the condition $\PR_{\tau}$.
\end{enumerate}
\end{lem}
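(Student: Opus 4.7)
For part (i), I would apply Lemma~\ref{lem:Dfnq} with $y=c$ for each $c\in\Crit$. Since $c\in\tB(c;\eps)\subset\tB(\eps)$, the hypothesis is satisfied and one gets
$$|Df_t^n(f_t(c))|\ge A\eps^{1-\ell_{\max}^{-1}}\,\Lambda(\eps)^m\,\exp\!\Bigl(-\ell_{\max}\sum_{k=1}^n q_\eps(f_t^k(c))\Bigr)\,e^{\eps^{\alpha(\eps)}n},$$
where $m=\#\{1\le k\le n: f_t^k(c)\in\tB(\eps)\}$. Since $q_\eps$ vanishes off $\tB(\eps)$, only the returns contribute to the sum, and by~\eqref{eqn:dfnd} each contribution equals one of the $d_j^{(c)}(t;\eps)$. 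The hypothesis $t\in X_\eps(C)$ then caps $\sum_{j=1}^m d_j^{(c)}\le Cm$, so
$$\Lambda(\eps)^m \exp(-\ell_{\max}Cm)=\bigl(\Lambda(\eps)e^{-\ell_{\max}C}\bigr)^m\ge 1$$
once $\eps$ is small enough that $\Lambda(\eps)\ge e^{\ell_{\max}C}$, which is possible by Proposition~\ref{prop:lemma-lambda}. Taking $\tfrac{1}{n}\log$ of the resulting lower bound, the constant prefactor drops out in the limit and the exponent $\eps^{\alpha(\eps)}$ survives, yielding $\liminf_n \tfrac{1}{n}\log|Df_t^n(f_t(c))|\ge \eps^{\alpha(\eps)}>0$, i.e.~(CE).

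For part (ii), the strategy is a direct unpacking of the definition~\eqref{eqn:setYn} of $Y_\eps(C,\tau)$. For $t\in Y_\eps(C,\tau)$, the inequality $\dist(f_t^{k+1}(\Crit),c)\ge \eps^{1/\ell(c)}(k+1)^{-\tau}$ holds for all $k\ge 0$ and $c\in\Crit$; reindexing with $n=k+1$ gives $\dist(f_t^n(c'),c)\ge \eps^{1/\ell(c)} n^{-\tau}$ for all $c,c'\in\Crit$ and $n\ge 1$, which is exactly $(\PR_\tau)$ with constant $C'=\min_{c\in\Crit}\eps^{1/\ell(c)}=\eps^{1/\ell_{\min}}$ (for $\eps<1$). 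The auxiliary parameter $\alpha>0$ appearing in the statement plays no role in this half.

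There is no real obstacle here: Proposition~\ref{prop:lemma-lambda} and Lemma~\ref{lem:Dfnq} have already done the phase-space heavy lifting, and what remains is bookkeeping. The one small care point is the identification of $\sum_{k=1}^n q_\eps(f_t^k(c))$ with $\sum_{j=1}^m d_j^{(c)}(t;\eps)$ in part (i); once one observes that $q_\eps\equiv 0$ outside $\tB(\eps)$ this is automatic, modulo a possible off-by-one coming from the convention that $S_j^{(c)}$ ranges over positive integers. Such a boundary term adds at most a fixed $\eps$-dependent constant to the exponent and therefore does not affect the Lyapunov lower bound.
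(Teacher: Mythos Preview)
Your proposal is correct and follows essentially the same route as the paper: for (i) the paper also applies Lemma~\ref{lem:Dfnq} with $y=c$, identifies $\sum_{k=1}^n q_\eps(f_t^k(c))$ with $\sum_{j=1}^m d_j^{(c)}(t;\eps)$, bounds the latter by $Cm$ using $t\in X_\eps(C)$, and chooses $\eps$ small so that $\Lambda(\eps)\ge e^{\ell_{\max}C}$; for (ii) the paper simply declares the statement trivial, which is exactly your direct unpacking of~\eqref{eqn:setYn}. Your off-by-one worry is in fact moot: for $\eps$ small the critical values $f_t(c)$ stay bounded away from $\Crit$, so $f_t^1(c)\notin\tB(\eps)$ and no boundary term arises.
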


\begin{proof}
For any $t\in X_\eps(C)$, $n\geq 1$ and any $c\in\Crit$, let $$m=\#\{1\le k\le n: f_t^k(c)\in\tB(\eps)\}.$$
By lemma~\ref{lem:Dfnq}, we have
\begin{multline*}
|Df_t^n(f_t(c))|\ge A\eps^{1-\ell_{max}^{-1}} e^{\eps^{\alpha(\eps)} n} \Lambda(\eps)^m
\exp \left(-\ell_{max}\sum_{j=1}^m d^{(c)}_j(t;\eps)\right)\\
\ge A\eps^{1-\ell_{max}^{-1}} e^{\eps^{\alpha(\eps)} n} \left(\Lambda(\eps) e^{-\ell_{\max} C}\right)^m
\ge A\eps^{1-\ell_{max}^{-1}} e^{\eps^{\alpha(\eps)} n},
\end{multline*}
provided that $\eps>0$ is small enough so that $\Lambda(\eps)\ge e^{\ell_{\max} C}$.
Hence, $f_t$ satisfies the condition (CE).
The second statement is trivial.
\end{proof}

\begin{reducedmain}
Let $F=(f_t)$ be a normalized regular family of interval maps. Assume that $f_0$ satisfies ($\SC$) and that the condition $(\NVT)$ holds at $t=0$. Then
\begin{enumerate}
\item[(i)] Given $C>0$ there exists $K=K(C)>0$ such that $K(C)\to \infty$ as $C\to\infty$ and such that
$$\left|X_{n,\eps}(C)\setminus X_{n+1,\eps}(C)\right|\le K^{-n} \eps, \,\, n=1,2,\ldots,$$
provided that $\eps>0$ is small enough.
\item[(ii)] Given $C>0$, the following holds provided that $\eps>0$ is small enough:
for any $t\in X_\eps(C)$, $(\NVT)$ holds.
\item[(iii)] Given $C>0$, $\tau>\tau_0>1$, and $\sigma>0$,  we have
$$\left|Y^m_{\eps}(C,\tau)\setminus Y^{m+1}_\eps (C,\tau) \right|\le \sigma \eps (m+1)^{-\tau_0}, \,\, m=0,1,,\ldots,$$
provided that $\eps>0$ is small enough.
\end{enumerate}
\end{reducedmain}

\begin{proof}[Proof of the Main Theorem]
For $\tau>1$, let $\mathscr{Z}_\tau$ denote the set of parameters $t\in [-1,1]$ for which $f_t$ satisfies the conditions (CE), $(\NVT)$ and $(\PR_{\tau})$. We shall prove that
\begin{equation}\label{eqn:mathscrZtau}
\lim_{\eps\to 0} \frac{\left|\mathscr{Z}_\tau\cap [-\eps,\eps]\right|}{2\eps}=1.
\end{equation}

Fix $\tau_0\in (1,\tau)$ and $\eta>0$. Choose a large constant $C>0$ and a small constant $\sigma>0$, such that
$$\frac{2K-3}{K-1}-\sigma \sum_{m=0}^\infty (m+1)^{-\tau_0}> 2-\eta,$$
where $K=K(C)$ is as in the Reduced Main Theorem.
Provided that $\eps>0$ is small enough, we have
$$|X_\eps(C)|=|X_{1,\eps}(C)|-\sum_{n=1}^\infty |X_{n,\eps}(C)\setminus X_{n+1,\eps}(C)|\ge \frac{2K-3}{K-1} \cdot\eps,$$
and
\begin{equation*}
|Y_\eps(C,\tau)|\ge |X_{\eps}(C)|-\sum_{m=0}^\infty \left|Y^m_{\eps}(C,\tau)\setminus Y^{m+1}_{\eps}(C,\tau) \right|
\ge \left(2-\eta\right)\eps.
\end{equation*}
By Lemma~\ref{lem:condCEPR}, and the second statement of the Reduced Main Theorem, we have $Y_\eps(C,\tau)\subset \mathscr{Z}_\tau$. Thus
$$|\mathscr{Z}_\tau\cap [-\eps,\eps]|\ge (2-\eta) \eps.$$
The equality (\ref{eqn:mathscrZtau}) follows.

To complete the proof, we shall show that $\mathscr{Z}_2\setminus \mathscr{Z}$ has zero measure. Since $\mathscr{Z}=\bigcap_{k=1}^\infty \mathscr{Z}_{1+k^{-1}}$, we only need to show that for each $\tau>1$, $\mathscr{Z}_2\setminus \mathscr{Z}_\tau$ has measure zero. Indeed, for each $t_0\in \mathscr{Z}_2$ and $\tau>1$, we can apply the above argument to $f_{t_0}$ instead of $f_0$, and obtain
that $t_0$ is not a Lebesgue density point of $\mathscr{Z}_2\setminus \mathscr{Z}_\tau$. By Lebesgue density Theorem, the statement follows.
\end{proof}

\noindent
{\bf Notations.} We collect the notations which will be used in the rest of the paper.
For each $c\in\Crit$, let
$$W^{(c)}=\sum_{n=0}^\infty |Df^n(f(c))|^{-1}<\infty$$
and $$a^{(c)}=\sum_{n=0}^\infty\frac{\partial_t F(f^n(c),0)}{Df^n(f(c))}\not=0.$$

For any $x\in[0,1]$, $t\in[-1,1]$ and $n\in\mathbb{N}$, we define
$$\A(x,t,n)=\sum\limits_{j=0}^{n-1}\frac{|Df_t^j(x)|}{\dist (f_t^j(x),\mathcal{C})}.$$
So if $f_t^j(x)\in\mathcal{C}$ for some $j<n$, then $\A(x,t,n)=\infty$.

As before let $S_j^{(c)}(t;\eps)$ denote
the $j$-th return time of $f_t(c)$ into $\tB(\eps)$, let $d_j^{(c)}(t;\eps)$ be as defined in (\ref{eqn:dfnd}). Define
$$P_{j}^{(c)}(t;\eps)=\frac{|Df_t^{S_j^{(c)}(t;\eps)}(f_t(c))|}{\dist(f_t^{S_j^{(c)}(t;\eps) +1}(c),\Crit)},$$
$$p_j^{(c)}(t;\eps)=\log \frac{|P_j^{(c)}(t;\eps)|}{\A(f_t(c), t, S_j^{(c)}(t;\eps))},$$
and
$$\widetilde{p}_j^{(c)}(t;\eps)=\min\left\{p_j^{(c)}(t;\eps),d_j^{(c)}(t;\eps) \right\}.$$
\section{Ploughing in the phase space}\label{sec:plough}
In this section, we obtain some estimates in the phase space.
The main results are Propositions~\ref{lemma-sum-bounded} and~\ref{prop:totaldepth} below.
Lemmas~\ref{lem:psorbretpre},~\ref{lemma-local-bounded-distortion} and~\ref{lem:theta0} used in the argument are taken from~\cite{S}.
Note that the non-degeneracy condition ($\NVT$) plays no role in this section.
\subsection{A uniform summability}

\begin{pro}\label{lemma-sum-bounded}
Given $\delta>0$,  the following holds provided that $\eps>0$ is small enough. For any $t\in [-\eps,\eps]$, $c\in\Crit$, $x\in [0,1]$ with $\dist(x, f(c))\le 4\eps$,
if $n$ is a non-negative integer such that $f_t^j(x)\not\in \tB(\eps)$ holds for all $0\le j<n$, then
$$\sum_{j=0}^{n} |Df_t^j(x)|^{-1}\le W^{(c)}+\delta.$$
\end{pro}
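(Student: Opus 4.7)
The plan is to compare the partial $f_t$-sum with the summable $f_0$-sum $W^{(c)}$ by splitting at a cut-off $N$: for iterates $j\le N$ a shadowing argument forces $|Df_t^j(x)|^{-1}$ to be close to $|Df^j(f(c))|^{-1}$, while for $j>N$ the uniform expansion provided by Proposition~\ref{prop:lemma-lambda}(ii) makes the tail geometrically small.

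Fix $\delta>0$. Because $W^{(c)}<\infty$ and hence $|Df^n(f(c))|\to\infty$, the orbit $\{f^j(f(c)):j\ge 0\}$ avoids $\Crit$, so for any $N$ the finite segment $\{f^j(f(c)):0\le j\le N\}$ lies at some positive distance $d_N$ from $\Crit$. I would choose $N$ so that $|Df^N(f(c))|\ge M$ for a threshold $M$ to be fixed below. For $\eps$ small depending on $N$, the $C^1$-continuity of $(t,x)\mapsto f_t(x)$ off $\Crit$ (formalised through the distortion lemma taken from~\cite{S}) gives
$$|f_t^j(x)-f^j(f(c))|<d_N/2,\qquad |Df_t^j(x)|\ge (1-\eta(\eps))\,|Df^j(f(c))|\quad(0\le j\le N),$$
with $\eta(\eps)\to 0$. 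Summing yields the shadowed-part bound
$$\sum_{j=0}^{\min(n,N-1)}|Df_t^j(x)|^{-1}\le (1-\eta(\eps))^{-1}\sum_{j=0}^{N-1}|Df^j(f(c))|^{-1}\le W^{(c)}+\delta/2$$
for $\eta(\eps)$ small enough in terms of $W^{(c)}$ and $\delta$. If $n\ge N$, the hypothesis $f_t^j(x)\notin\tB(\eps)$ for $j<n$ means that $f_t^i(f_t^N(x))\notin\tB(\eps)$ for $0\le i<n-N$. Applying Proposition~\ref{prop:lemma-lambda}(ii) at the base point $f_t^N(x)$ together with the chain rule gives
$$|Df_t^j(x)|^{-1}\le \frac{e^{-\eps^{\alpha(\eps)}(j-N)}}{(1-\eta(\eps))\,|Df^N(f(c))|\,A\eps^{1-\ell_{\max}^{-1}}}\qquad(N<j\le n),$$
and summing the resulting geometric series bounds the tail by $C/\bigl(M\eps^{1-\ell_{\max}^{-1}+\alpha(\eps)}\bigr)$, which is $<\delta/2$ provided $M$ is chosen sufficiently large in terms of $\eps,\delta$.

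The hard part is the tension between the two requirements on $N$: the tail bound forces $|Df^N(f(c))|$ to be at least a negative power of $\eps$, so $N=N(\eps)$ must grow with $\eps\to 0$, whereas the $C^1$-shadowing as stated above only survives for $j\le N$ bounded in terms of how close $f^j(f(c))$ comes to $\Crit$. The resolution, which I expect to follow along the lines of~\cite{S}, is to apply Proposition~\ref{prop:lemma-lambda} itself at $t=0$ to control the growth of $|Df^j(f(c))|$ along the reference orbit between close approaches to $\Crit$, and to use the pseudo-orbit and distortion lemmas from~\cite{S} (\ref{lem:psorbretpre} and \ref{lemma-local-bounded-distortion}) to propagate the shadowing through those close approaches. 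Exhibiting a choice of $N=N(\eps)$ that lies simultaneously within the shadowing window and the tail-control window is the main technical point of the proof.
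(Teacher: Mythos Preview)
Your split at a cut-off $N$ and the shadowing estimate for $j\le N$ are exactly what the paper does (with $N=M_c(\eps)$ taken from Lemma~\ref{lem:psorbretpre}). You also correctly identify the real obstacle: after the binding period the constant $A\eps^{1-\ell_{\max}^{-1}}$ and the tiny rate $\eps^{\alpha(\eps)}$ in Proposition~\ref{prop:lemma-lambda}(ii) make the tail bound too weak, and no fixed $N(\eps)$ will simultaneously lie in the shadowing window and give enough derivative to kill this deficit.

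Where your proposal goes wrong is in the suggested resolution. You propose to ``propagate the shadowing through close approaches'' so that the comparison with the $f_0$-orbit survives all the way to a sufficiently large $N(\eps)$. This is not what the paper does, and it is not clear it can be made to work: after the binding period the $f_t$-orbit may visit $\tB(\eps_0)\setminus\tB(\eps)$ repeatedly, and there is no mechanism that keeps it glued to any fixed $f_0$-orbit through those passages. The paper's key idea is instead a \emph{bootstrap}. One introduces
\[
L^{(c)}(\eps)=\sup\Bigl\{\textstyle\sum_{j=0}^n|Df_t^j(x)|^{-1}:\ |t|\le\eps,\ |x-f(c)|\le 4\eps,\ f_t^j(x)\notin\tB(\eps)\ \text{for}\ j<n\Bigr\}
\]
and $L_*(\eps)=\max_c\sup_{\eps'\in[\eps,4\eps_0]}L^{(c)}(\eps')$, and proves the self-referential inequality $L^{(c)}(\eps)\le W^{(c)}+\delta\,L_*(\eps)$ (Lemma~\ref{lem:recbound}). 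The tail after the binding period is not estimated directly; instead, if $f_t^M(x)$ lands in $\tB(c_*;\eps_0)$, the remaining sum is bounded by $|Df_t^{M+1}(x)|^{-1}$ times a quantity of the same type at scale $\eps_*=\dist(f_t^{M+1}(x),f(c_*))$, which Lemma~\ref{lem:anyentereps} controls by $4L_*(\eps)(\eps_*/\eps)^{1-\ell_{\max}^{-1}}$. The factor $(\eps_*/\eps)^{1-\ell_{\max}^{-1}}$ exactly cancels against the derivative gain $|Df_t^{M+1}(x)|\ge\Lambda_0(\eps)(\eps_*/\eps)^{1-\ell_{\max}^{-1}}$ from Lemma~\ref{lem:psorbretpre}, leaving a contribution $\le 4\Lambda_0(\eps)^{-1}L_*(\eps)$, which is $\le\delta L_*(\eps)$ since $\Lambda_0(\eps)\to\infty$. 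A short contradiction argument then shows $L_*(\eps)$ is uniformly bounded, and plugging back into the self-referential inequality gives the Proposition. The recursion through $L_*$ is the missing ingredient in your outline; without it the tension you describe cannot be resolved.
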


Before we prove this proposition, let us state a corollary.
\begin{cor}\label{cor:Xnepsbound} Given $\theta>0$ and $C>0$ the following holds provided that $\eps>0$ is small enough: for each $t\in X_{n,\eps}(C)$ and $c\in\Crit$, if $S_n$ is the $n$-th return time of $f_t(c)$ into $\tB(\eps)$, then
$$\sum_{i=0}^{S_n}|Df_t^i(f_t(c))|^{-1}\le W^{(c)} +\theta. $$
\end{cor}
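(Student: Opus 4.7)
The plan is to decompose the sum along the returns of the orbit to $\tB(\eps)$, apply Proposition~\ref{lemma-sum-bounded} on each free excursion, and then bound the resulting multiplicative prefactors using Lemma~\ref{lem:Dfnq} combined with the defining inequality of $X_{n,\eps}(C)$. Concretely, I set $S_0=-1$, let $c_0=c$, and for $1\le k\le n-1$ let $c_k\in\Crit$ be the critical point with $f_t^{S_k+1}(c)\in\tB(c_k;\eps)$. Writing $y_k:=f_t^{S_k+1}(f_t(c))=f_t^{S_k+2}(c)$ (so $y_0=f_t(c)$) and factoring along the chain rule give
$$\sum_{i=0}^{S_n}|Df_t^i(f_t(c))|^{-1} \;=\; \sum_{k=0}^{n-1}|Df_t^{S_k+1}(f_t(c))|^{-1}\sum_{j=0}^{S_{k+1}-S_k-1}|Df_t^j(y_k)|^{-1}.$$

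For each inner sum, the normalization properties (ii)--(iii) give $\dist(y_k,f(c_k))=O(\eps)$ (combining $|f_t^{S_k+1}(c)-c_k|^{\ell(c_k)}\le\eps$ with $|f_t-f_0|\le|t|\le\eps$), and the intermediate orbit $\{f_t^j(y_k)\}_{j=0}^{S_{k+1}-S_k-2}$ is disjoint from $\tB(\eps)$ by the very definition of the return times $S_k$. Hence Proposition~\ref{lemma-sum-bounded} applied with $c\mapsto c_k$ bounds each inner sum by $W^{(c_k)}+\delta$, for any preassigned $\delta>0$, once $\eps$ is small enough.

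For the prefactors, I apply Lemma~\ref{lem:Dfnq} to $y=c\in\tB(\eps)$ with $n=S_k+1$: the return count is $m=k$ and the depth sum collapses to $\sum_{l=1}^k d_l^{(c)}(t;\eps)$, which is at most $Ck$ because $t\in X_{n,\eps}(C)$. This yields $|Df_t^{S_k+1}(f_t(c))|^{-1}\le \bigl(e^{\ell_{\max}C}/\Lambda(\eps)\bigr)^k$ for $1\le k\le n-1$, and trivially equals $1$ for $k=0$. Summing and using $\Lambda(\eps)\to\infty$ as $\eps\to 0$,
$$\sum_{i=0}^{S_n}|Df_t^i(f_t(c))|^{-1} \;\le\; W^{(c)}+\delta + (W_{\max}+\delta)\sum_{k=1}^\infty\bigl(e^{\ell_{\max}C}/\Lambda(\eps)\bigr)^k,$$
where $W_{\max}:=\max_{c'\in\Crit}W^{(c')}$; choosing $\delta=\theta/2$ and then $\eps$ sufficiently small produces the desired bound $W^{(c)}+\theta$. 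The conceptual point is that Proposition~\ref{lemma-sum-bounded} only controls one excursion, so stitching $n$ of them together requires the prefactors to contract geometrically — this is exactly what the $X_{n,\eps}(C)$ hypothesis combined with the large-$\Lambda(\eps)$ regime of Proposition~\ref{prop:lemma-lambda} secures. Beyond careful indexing at the endpoint $k=0$ there is no substantive obstacle.
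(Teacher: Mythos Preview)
Your proof is correct and follows essentially the same approach as the paper's: both decompose the sum along the return times (setting $S_0=-1$), bound each free excursion by Proposition~\ref{lemma-sum-bounded}, and control the prefactors $|Df_t^{S_k+1}(f_t(c))|^{-1}$ via Lemma~\ref{lem:Dfnq} together with the depth bound from $t\in X_{n,\eps}(C)$, so that the tail is a convergent geometric series once $\Lambda(\eps)>e^{\ell_{\max}C}$. The only cosmetic difference is that the paper fixes a constant $\Lambda>(W+\theta)/(\theta-\delta)$ in advance and then takes $\eps$ small, whereas you phrase it as choosing $\delta=\theta/2$ and then shrinking $\eps$; these are equivalent.
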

\begin{proof} Denote $W=\max\limits_{c\in\Crit}W^{(c)}$ and fix constants $\delta\in (0,\theta)$ and $\Lambda> (W+\theta)/(\theta-\delta)$.
Let $S_0=-1$, and for each $j\ge 1$, let $S_j$ be the $j$-th return time of $f_t(c)$ into $\tB(\eps)$. Write $y_j=f_t^{S_j+1}(c)$, $x_j=f_t(y_j)$.
Provided that $\eps>0$ is small enough, by Proposition~\ref{lemma-sum-bounded}, we have
$$H_k:=\sum_{i=0}^{S_{k+1}-S_k-1} |Df_t^i(x_k)|^{-1}\le W+\delta,$$
for each $k=1,2,\ldots$.
Moreover, by Lemma~\ref{lem:Dfnq}, we have $|Df_t^{S_k+1}(f_t(c))|\ge \Lambda^k$.
Thus
$$\sum_{i=0}^{S_n}|Df_t^i(f_t(c))|^{-1} \leq W^{(c)}+\delta  + \sum_{k=1}^{n-1}|Df_t^{S_k+1}(f_t(c))|^{-1}H_k< W^{(c)}+\theta.$$
\end{proof}

Fix $\eps_0>0$ small such that Propositions~\ref{prop:lemma-lambda} holds for all $\eps\in (0,4\eps_0]$ with $\Lambda(\eps)\ge 4$.
For each $\eps,\eps'\in (0,4\eps_0]$ and $c\in\Crit$, let $\mathcal{D}^{(c)}(\eps,\eps')$
be the collection of all triples $(x, t, n)$ with the following properties:
$|x-f(c)|\leq 4 \eps '$,  $|t|\le \eps$, and $n$ is a non-negative integer such that $f_t^j(x)\not\in \tB(\eps)$ for all $0\le j<n$, and let
$$\widehat{L}^{(c)}(\eps,\eps')= \sup\left\{\sum_{i=0}^n |Df_t^i(x)|^{-1}: (x,t, n)\in\mathcal{D}^{(c)}(\eps,\eps')\right\}.$$

Moreover, let
$$L^{(c)}(\eps)=\widehat{L}^{(c)}(\eps,\eps),$$
$$L^{(c)}_*(\eps)=\sup\{{L}^{(c)}(\eps'): \eps'\in [\eps, 4\eps_0]\},\,\,L_*(\eps)=\max_{c\in\Crit} L^{(c)}_*(\eps),$$
and  $$\widehat{L}(\eps,\eps')=\max_{c\in\Crit}\widehat{L}^{(c)}(\eps,\eps').$$
By Proposition~\ref{prop:lemma-lambda} (ii), $1\le L_*(\eps)<\infty$ for each $\eps>0$.

\begin{lem}\label{lem:anyentereps}
For any $0<\eps\leq\eps'\leq 2\eps_0$, we have
\begin{equation}\label{eqn:L*gehL}
\widehat{L}(\eps,\eps')\leq 4 L_*(\eps)\left(\frac{\eps'}{\eps}\right)^{1-\ell_{max}^{-1}}
\end{equation}
\end{lem}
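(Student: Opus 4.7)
My plan is to fix a triple $(x,t,n)\in\mathcal{D}^{(c)}(\eps,\eps')$ and decompose the orbit by its successive visits to $\tB(\eps')$. Let $0\le n_0<n_1<\cdots<n_k\le n$ enumerate all indices $j$ with $f_t^j(x)\in\tB(\eps')$; since the orbit avoids $\tB(\eps)$, each $f_t^{n_i}(x)$ sits in the annulus $\tB(c_i;\eps')\setminus\tB(c_i;\eps)$ for some $c_i\in\Crit$. In the easy case, where no such visit occurs, the triple already belongs to $\mathcal{D}^{(c)}(\eps',\eps')$, so the sum is directly bounded by $L^{(c)}(\eps')\le L_*(\eps)$, which is at most the desired right-hand side. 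Otherwise I write the total sum as a pre-entry part $\sum_{i<n_0}$ plus pieces $\sum_{i=n_j}^{n_{j+1}-1}$ with the convention $n_{k+1}:=n+1$.

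For each piece I peel off the initial term $|Df_t^{n_j}(x)|^{-1}$ and rewrite the remainder as $|Df_t^{n_j+1}(x)|^{-1}\sum_{i=0}^{n_{j+1}-n_j-2}|Df_t^i(f_t^{n_j+1}(x))|^{-1}$. The normalized form near $c_j$ combined with $|t|\le\eps$ yields $|f_t^{n_j+1}(x)-f(c_j)|\le\eps'+\eps\le 2\eps'$, and the shifted orbit avoids $\tB(\eps')$ strictly between consecutive returns, so $(f_t^{n_j+1}(x),t,n_{j+1}-n_j-1)\in\mathcal{D}^{(c_j)}(\eps',\eps')$ and the inner sum is at most $L^{(c_j)}(\eps')\le L_*(\eps)$; the pre-entry sum is bounded identically. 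I then control the prefactors by iterating Proposition~\ref{prop:lemma-lambda}(i) at scale $\eps'$ between consecutive returns, which gives $|Df_t^{n_j-n_{j-1}-1}(f_t^{n_{j-1}+1}(x))|\ge\Lambda(\eps')/D_{c_j}(\eps')$, while $f_t^{n_{j-1}}(x)\notin\tB(\eps)$ gives $|Df_t(f_t^{n_{j-1}}(x))|\ge 2\ell_{min}D_{c_{j-1}}(\eps)$. Multiplying telescopically and using the key identity $D_c(\eps)/D_c(\eps')=(\eps/\eps')^{1-\ell(c)^{-1}}\ge(\eps/\eps')^{1-\ell_{\max}^{-1}}$ produces
$$|Df_t^{n_j+1}(x)|^{-1}\le\rho^{j+1},\qquad \rho:=\frac{(\eps'/\eps)^{1-\ell_{\max}^{-1}}}{2\ell_{min}\Lambda(\eps')},$$
together with the parallel estimate $|Df_t^{n_j}(x)|^{-1}\le \rho^j/(2\Lambda(\eps'))$.

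Summing the geometric series in $\rho$ over $j$ and combining with the pre-entry term gives a total bounded by $L_*(\eps)+1/(2\Lambda(\eps')(1-\rho))+L_*(\eps)\rho/(1-\rho)$, which closes to $4L_*(\eps)(\eps'/\eps)^{1-\ell_{\max}^{-1}}$ as soon as $\rho\le 1/2$; this happens automatically when $\eps'/\eps$ is comparable to $1$, since $\Lambda(\eps')\ge 4$ by the choice of $\eps_0$. The main obstacle is the regime $\rho>1/2$, which occurs once $\eps'\gg\eps$ and the naive geometric summation fails to converge. I expect to handle this by a dyadic induction on $\lceil\log_2(\eps'/\eps)\rceil$: assuming the claimed bound at scale $\eps'/2$, rerun the decomposition via visits to $\tB(\eps'/2)$ (in place of $\tB(\eps')$) and apply the inductive hypothesis to each tail piece, which now lives in $\mathcal{D}^{(c_j)}(\eps,\eps'/2)$. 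The contraction factor $2^{\ell_{\max}^{-1}-1}<1$ in the growth per halving, together with the slack in the constant $4$, provides exactly the room needed to absorb the constants picked up at one dyadic step and close the recursion.
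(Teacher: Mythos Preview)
Your dyadic induction instinct is exactly right, and the paper does proceed by induction on $k=\lceil\log_2(\eps'/\eps)\rceil$. But the induction step you describe has a genuine flaw: you propose to decompose by visits to $\tB(\eps'/2)$ and apply the inductive bound $\widehat L(\eps,\eps'/2)$ to \emph{each} piece between visits. This fails for two reasons. First, the pre-entry segment (before the first visit to $\tB(\eps'/2)$) lies in $\mathcal D^{(c)}(\eps'/2,\eps')$, not in any class controlled by $L_*(\eps)$ or by the inductive hypothesis; and Proposition~\ref{prop:lemma-lambda}(i) at scale $\eps'/2$ does not apply to it because the starting point is only within $4\eps'$, not $4(\eps'/2)$, of $\CV$. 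Second, even granting bounds on the pieces, the prefactors $|Df_t^{n_j+1}(x)|^{-1}$ still form a geometric series with ratio comparable to $(\eps'/\eps)^{1-\ell_{\max}^{-1}}/\Lambda$, which is exactly the same $\rho$ you already identified as problematic --- no contraction is gained by passing to the half scale. The handwave about the factor $2^{\ell_{\max}^{-1}-1}$ does not repair this: you would need the recursion to involve a \emph{single} application of the inductive bound, not a sum of many.

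The fix, which is what the paper does, is to decompose by the \emph{first} visit $m$ to $\tB(\eps')$ (the outer scale, not $\eps'/2$). The pre-visit segment is then genuinely in $\mathcal D^{(c)}(\eps',\eps')$, bounded by $L_*(\eps)$. After the visit, the image lands at the \emph{actual} distance $\eps_*:=|f_t^{m+1}(x)-f_t(c_*)|\in[\eps,\eps']$ from $\CV$, so the entire tail lies in $\mathcal D^{(c_*)}(\eps,\eps_*/2)$ and the induction hypothesis applies at scale $\eps_*/2$. The crucial cancellation is that Proposition~\ref{prop:lemma-lambda}(i) at scale $\eps'$ gives $|Df_t^{m+1}(x)|\ge 4(\eps_*/\eps')^{1-\ell_{\max}^{-1}}$, and this $\eps_*$-dependence exactly matches the $\eps_*$-dependence of the inductive bound, yielding a clean contribution $\le L_*(\eps)(\eps'/2\eps)^{1-\ell_{\max}^{-1}}$ independent of $\eps_*$. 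Your scheme loses this cancellation by committing prematurely to the fixed scale $\eps'/2$.
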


\begin{proof}
It suffices to prove that for any integer $k\geq 0$ such that $2^k\eps \leq 4\eps_0$, we have
\begin{equation}\label{eqn:inductionK}
\widehat{L}(\eps,2^k\eps)\leq 2 L_*(\eps) \cdot 2^{k(1-\ell_{max}^{-1})}.
\end{equation}
Indeed, this implies that for any $\eps'\in[2^{k-1}\eps,2^k\eps]$, we have
\begin{equation}\label{eqn:induciton}
\widehat{L}(\eps,\eps')\leq \widehat{L}(\eps,2^{k}\eps)\leq 2 L_*(\eps)\cdot 2^{k(1-\ell_{max}^{-1})}
< 4 L_*(\eps)\left(\frac{\eps'}{\eps}\right)^{1-\ell_{max}^{-1}}.
\end{equation}

Let us prove (\ref{eqn:inductionK}) by induction on $k$. By definition, the case $k=0$ is clear. Assume (\ref{eqn:inductionK})
holds for all $k$ not greater than some $j$. Let us consider the case $k=j+1$ with $2^{j+1}\eps\leq 4\eps_0$.
For $c\in\Crit$ and $(x,t,n)\in \mathcal{D}^{(c)}(\eps,2^{j+1}\eps)$, we need to prove that
\begin{equation}\label{eqn:hLepsb}
\sum\limits_{i=0}^n |Df_t^i(x)|^{-1}\leq 2 L_*(\eps)\cdot 2^{(j+1)(1-\ell_{max}^{-1})}.
\end{equation}

If $f_t^i(x)\notin\widetilde{B}(2^{j+1}\eps)$ holds for all
$0\leq i<n$, then $(x,t,n)\in\mathcal{D}^{(c)}(2^{j+1}\eps,2^{j+1}\eps)$, so (\ref{eqn:hLepsb}) holds by definition of $L_*$. Otherwise, let
$m\in\{1,2,\ldots, n-1\}$ be minimal such that $f_t^m(x)\in \widetilde{B}(2^{j+1}\eps)$. Then
$$\sum\limits_{i=0}^m |Df_t^i(x)|^{-1}\leq L_*(2^{j+1}\eps)\leq L_*(\eps).$$
Let $c_*\in\Crit$ be the critical point closest to $f_t^m(x)$ and $\eps_*=|f_t^{m+1}(x)-f_t(c_*)|$.
By Proposition~\ref{prop:lemma-lambda} (i), we have
$$|Df_t^{m+1}(x)|\geq \Lambda(2^{j+1}\eps)\left(\frac{\eps_*}{2^{j+1}\eps}\right)^{1-\ell(c_*)^{-1}}
\geq 4\left(\frac{\eps_*}{2^{j+1}\eps}\right)^{1-\ell_{max}^{-1}}.$$
Notice that $|f_t^{m+1}(x)-f(c_*)|\leq 2\eps_*$. If $\eps_*/2\leq \eps$, we have that
$$\sum\limits_{i=0}^{n-m-1}|Df_t^i(f_t^{m+1}(x))|^{-1}\leq L_*(\eps)\le
4L_*(\eps)\left(\frac{\eps_*}{2\eps}\right)^{1-\ell_{max}^{-1}}
,$$
where for the last inequality we have used $\eps_*\ge \eps$.
Otherwise, $\eps<\eps_*/2\leq 2^j\eps$, by induction and (\ref{eqn:induciton}), we have
$$\sum\limits_{i=0}^{n-m-1}|Df_t^i(f_t^{m+1}(x))|^{-1}\leq \widehat{L}(\eps,\eps_*/2)
\leq 4L_*(\eps)\left(\frac{\eps_*}{2\eps}\right)^{1-\ell_{max}^{-1}}.$$

Thus,
\begin{eqnarray*}
\sum\limits_{i=0}^n |Df_t^i(x)|^{-1}
&=&\sum\limits_{i=0}^m |Df_t^i(x)|^{-1}+|Df_t^{m+1}(x)|^{-1}\sum\limits_{i=0}^{n-m-1}|Df_t^i(f_t^{m+1}(x))|^{-1}\\
&< &  2L_*(\eps) 2^{(j+1)(1-\ell_{max}^{-1})}  .
\end{eqnarray*}
%
%
\end{proof}

To complete the proof, we shall need the following result which is a reformulation of~\cite[Proposition 5.2]{S}.

\begin{lem}\label{lem:psorbretpre}
For $\eps>0$ sufficiently small and each $c\in \Crit$, there exists a constant $\Lambda_0(\eps)>0$ and
a positive integer $M=M_c(\eps)\ge 1$ such that
$\lim_{\eps\to 0}\Lambda_0(\eps)=\infty$ and such that
the following holds:
for any $t\in [-\eps,\eps]$ and $y\in [0,1]$ with $|y- f(c)|\le 4 \eps$, we have
\begin{align}
\label{eqn:yjposition}
y_j :=f_t^j(y)& \not\in \tB(2\eps)\mbox{ for all }0\le j<M;\\
\label{eqn:der}
e^{-1} |Df^j(f(c))|& \le |Df_t^j(y)|\le e|Df^j(f(c))|\mbox{ for all } 0\le j\le M.
\end{align}
If $f_t^M(y)\notin\widetilde{B}(\eps_0)$, then
\begin{equation}\label{eqn:notineps_0}
|Df_t^{M+1}(y)|\geq \Lambda_0(\eps)\left(\frac{\eps_0}{\eps}\right)^{1-\ell_{max}^{-1}};
\end{equation}
If $f_t^M(y)\in\widetilde{B}(\eps_0)$ and $f_t^{M}(y)\notin\widetilde{B}(\eps)$, then
\begin{equation}\label{eqn:ineps_0}
|Df_t^{M+1}(y)|\geq \Lambda_0(\eps)\left(\frac{\dist(f_t^{M+1}(y),\CV)}{\eps}\right)^{1-\ell_{max}^{-1}}.
\end{equation}
\end{lem}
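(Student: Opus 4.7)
The plan is to identify this statement with Proposition~5.2 of~\cite{S}. Since $F$ is a normalized regular family it is admissible in the sense of~\cite{S}, so that result applies directly; the remaining task is to repackage its conclusion in the present notation. Concretely, I would choose $M = M_c(\eps)$ as essentially the first positive integer such that $z_M := f^M(f(c))$ lies in $\tB(3\eps)$, where $z_j = f^j(f(c))$ denotes the unperturbed postcritical orbit. The summability condition $\sum_j |Df^j(f(c))|^{-1} < \infty$ implies that only finitely many $z_j$ lie in $\tB(r)$ for each fixed $r > 0$, with the rate controlled by the tail of the summable series; consequently $M_c(\eps) \to \infty$ as $\eps \to 0$.

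Once $M$ is fixed, items (\ref{eqn:yjposition}) and (\ref{eqn:der}) follow from a shadowing argument. For $|t|\le\eps$ and $|y - f(c)| \le 4\eps$, the bounds $|\partial_t F| \le 1$ (from the normalization) together with bounded distortion of $f$ off $\tB(2\eps)$ (valid because $f$ is $C^3$ outside $\Crit$) and the growth of $|Df^j(f(c))|$ combine to give inductively
\[
|f_t^j(y) - z_j| \;\le\; C\,\eps\,|Df^j(f(c))|\,\sum_{i=0}^{j-1}|Df^i(f(c))|^{-1} \;=\; O\!\left(\eps\,|Df^j(f(c))|\right)
\]
for $j \le M$. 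By the defining property of $M$, this error is far smaller than $\dist(z_j,\Crit)$, so $f_t^j(y)\notin\tB(2\eps)$, and the multiplicative distortion estimate (\ref{eqn:der}) then follows from the standard $C^2$ bounded-distortion principle along orbits avoiding critical neighbourhoods.

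For the lower bounds (\ref{eqn:notineps_0}) and (\ref{eqn:ineps_0}) past the shadowing window, I would split $|Df_t^{M+1}(y)| = |Df_t^M(y)| \cdot |Df_t(f_t^M(y))|$. The first factor is comparable to $|Df^M(f(c))|$ by (\ref{eqn:der}). The second, for the critical point $c'$ closest to $f_t^M(y)$, equals $\ell(c')|f_t^M(y) - c'|^{\ell(c')-1}$ by normalization~(ii), which is a constant multiple of $|f_t^{M+1}(y) - f_t(c')|^{1 - 1/\ell(c')}$. Combining gives the stated lower bounds with $\Lambda_0(\eps)$ essentially equal to $\min_{c\in\Crit} |Df^M(f(c))|\cdot\eps$, up to bounded constants; the two cases (\ref{eqn:notineps_0}) and (\ref{eqn:ineps_0}) correspond to whether $f_t^M(y)$ lies outside $\tB(\eps_0)$ or inside the annulus $\tB(\eps_0)\setminus\tB(\eps)$.

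The main obstacle is verifying $\Lambda_0(\eps)\to\infty$ as $\eps\to 0$. Raw summability gives only $|Df^j(f(c))|\to\infty$, whereas we require the quantitative statement $|Df^M(f(c))|\cdot\eps\to\infty$ where $M = M_c(\eps)$ is tied to the depth of the first deep return. The mechanism is that $z_M\in\tB(3\eps)$ forces $|Df(z_M)|$ to be small of order $\eps^{1-1/\ell_{\max}}$, so the summable tail must compensate by providing enough expansion before step $M$; making this rigorous is precisely the delicate content of Proposition~5.2 of~\cite{S}, which I would invoke rather than reprove.
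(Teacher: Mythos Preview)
Your proposal is correct and matches the paper's approach exactly: the paper states this lemma without proof, introducing it simply as ``a reformulation of~\cite[Proposition 5.2]{S}'', which is precisely what you do. Your additional sketch of the shadowing mechanism and the identification of $\Lambda_0(\eps)\to\infty$ as the delicate point (deferred to~\cite{S}) is accurate supplementary commentary; the only minor imprecision is that $\Lambda_0(\eps)$ should scale like $|Df^{M}(f(c))|\cdot\eps^{1-\ell_{\max}^{-1}}$ rather than $|Df^{M}(f(c))|\cdot\eps$, but this does not affect the argument.
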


\begin{lem}\label{lem:recbound}
Let $\delta>0$ be given. Then for $\eps>0$ small enough, and any $c\in\Crit$,
$$L^{(c)}(\eps)\le W^{(c)} + \delta L_*(\eps).$$
\end{lem}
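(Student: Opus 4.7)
The plan is to combine a shadowing argument on the initial segment with a case analysis on the tail. Given $(x,t,n)\in\mathcal{D}^{(c)}(\eps,\eps)$, I apply Lemma~\ref{lem:psorbretpre} to obtain the shadow time $M=M_c(\eps)$. Using the summability $W^{(c)}<\infty$, I pick $N$ so large that $\sum_{i>N}|Df^i(f(c))|^{-1}<\delta/(4e)$; $C^2$ continuity of $F$ in $(x,t)$ then controls the first $N$ terms to within $\delta/4$ of the corresponding $f_0$-orbit terms for small $\eps$, and the shadow derivative estimate $|Df_t^i(x)|\ge e^{-1}|Df^i(f(c))|$ handles $N<i\le M$. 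Together these give $\sum_{i=0}^{\min(n,M)}|Df_t^i(x)|^{-1}\le W^{(c)}+\delta/2$.

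When $n>M$, I write the remainder as $T=|Df_t^{M+1}(x)|^{-1}S$ with $S=\sum_{j=0}^{n-M-1}|Df_t^j(z)|^{-1}$, $z=f_t^{M+1}(x)\notin\tB(\eps)$. Set $d=\dist(z,\CV)$, pick $c_1\in\Crit$ realizing this minimum, and let $\eps_1=\max(d/4,\eps)$; then $(z,t,n-M-1)\in\mathcal{D}^{(c_1)}(\eps,\eps_1)$ and Lemma~\ref{lem:anyentereps} yields $S\le 4L_*(\eps)(\eps_1/\eps)^{1-\ell_{\max}^{-1}}$ provided $\eps_1\le 2\eps_0$, which automatically holds whenever $y_M\in\tB(\eps_0)$ since then $|z-f(c_1)|\le \eps_0+\eps$. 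In the ``regular'' subcases---$y_M=f_t^M(x)\notin\tB(\eps_0)$, or $y_M\in\tB(\eps_0)\setminus\tB(\eps)$ with $d\ge\eps$---the shadow-lemma lower bound on $|Df_t^{M+1}(x)|$ cancels $(\eps_1/\eps)^{1-\ell_{\max}^{-1}}$ up to an absolute constant, giving $T=O(\Lambda_0(\eps)^{-1}L_*(\eps))$, so $T\le \delta L_*(\eps)/4$ for $\eps$ sufficiently small.

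The main obstacle is the residual subcase $y_M\in\tB(\eps_0)\setminus\tB(\eps)$ with $d<\eps$, where the shadow bound $|Df_t^{M+1}(x)|\ge \Lambda_0(\eps)(d/\eps)^{1-\ell_{\max}^{-1}}$ degenerates as $d\to 0$. I resolve it by geometry: shrink $\eps_0$ so that any two distinct critical values of $f$ are at distance greater than $4\eps_0$. Let $c^\ast$ be the unique critical point with $y_M\in B(c^\ast,\eps_0^{1/\ell(c^\ast)})$; then $|z-f(c^\ast)|\le 2\eps_0$ while the triangle inequality forces $|z-f(c')|>2\eps_0>\eps$ for every $c'$ with $f(c')\ne f(c^\ast)$. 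Hence $d<\eps$ implies $d=|z-f(c^\ast)|<\eps$, and combined with $|f(c^\ast)-f_t(c^\ast)|\le \eps$ this yields $|z-f_t(c^\ast)|<2\eps$, i.e., $y_M\in\tB(c^\ast;2\eps)\setminus\tB(c^\ast;\eps)$.

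In this reduced situation, Proposition~\ref{prop:lemma-lambda}(i) applies at $s=M$ with the critical point $c^\ast$ (its hypotheses $\dist(x,\CV)\le 4\eps$ and $f_t^j(x)\notin\tB(\eps)$ for $0\le j<M$ being supplied by $(x,t,n)\in\mathcal{D}^{(c)}(\eps,\eps)$ and Lemma~\ref{lem:psorbretpre}), giving $|Df_t^M(x)|\ge 2\Lambda(\eps)\eps^{-(1-\ell(c^\ast)^{-1})}$. Multiplying by the critical-point factor $|Df_t(y_M)|\ge \ell(c^\ast)\eps^{1-\ell(c^\ast)^{-1}}$ yields $|Df_t^{M+1}(x)|\ge 2\ell_{\min}\Lambda(\eps)$, which tends to infinity. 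Since $\eps_1=\eps$ and $S\le 4L_*(\eps)$ here, $T=O(\Lambda(\eps)^{-1}L_*(\eps))$ is again $o(L_*(\eps))$. Combining the shadow estimate with the tail bound $T\le \delta L_*(\eps)/2$ and using $L_*(\eps)\ge 1$, one concludes $\sum_{i=0}^n|Df_t^i(x)|^{-1}\le W^{(c)}+\delta L_*(\eps)$ for $\eps$ sufficiently small.
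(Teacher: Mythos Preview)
Your argument has a genuine gap in the subcase $y_M=f_t^M(x)\notin\tB(\eps_0)$. You write the tail as $T=|Df_t^{M+1}(x)|^{-1}S$ and invoke Lemma~\ref{lem:anyentereps} to bound $S\le 4L_*(\eps)(\eps_1/\eps)^{1-\ell_{\max}^{-1}}$, but that lemma requires $\eps\le\eps_1\le 2\eps_0$, and you only verify this hypothesis when $y_M\in\tB(\eps_0)$. When $y_M\notin\tB(\eps_0)$, the point $z=f_t(y_M)$ need not be close to any critical value: $d=\dist(z,\CV)$ can be of order~$1$, so $\eps_1=\max(d/4,\eps)$ can exceed $2\eps_0$, and Lemma~\ref{lem:anyentereps} simply does not apply. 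Your cancellation claim---that the shadow bound $|Df_t^{M+1}(x)|\ge\Lambda_0(\eps)(\eps_0/\eps)^{1-\ell_{\max}^{-1}}$ absorbs $(\eps_1/\eps)^{1-\ell_{\max}^{-1}}$ up to an absolute constant---is therefore unsupported, since you have no bound on $S$ in terms of $L_*(\eps)$ to begin with.

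The paper's proof closes this gap by inserting an extra step in its Case~2: it lets $k\in(M,n]$ be the first time $f_t^k(x)\in\tB(\eps_0)$, and uses Proposition~\ref{prop:lemma-lambda}(ii) at scale $\eps_0$ (not $\eps$) to bound both $\sum_{j=M+1}^k|Df_t^j(x)|^{-1}$ and $|Df_t^{k-M-1}(f_t^{M+1}(x))|$ by constants depending only on $\eps_0$. Only after time $k$, when the orbit is genuinely $\eps_0$-close to a critical value, does it invoke Lemma~\ref{lem:anyentereps}. You should add exactly this intermediate iteration to your argument. Incidentally, your careful treatment of the residual subcase $d<\eps$ (via Proposition~\ref{prop:lemma-lambda}(i)) is correct and arguably more precise than the paper, which tacitly writes $\eps_*\in[\eps,2\eps_0]$ in its Case~1; but this refinement does not compensate for the missing step above.
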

\begin{proof} In the following, we assume $\eps>0$ small.  We need to prove
that for each $(x,t,n)\in \mathcal{D}^{(c)}(\eps,\eps)$,
\begin{equation}\label{eqn:LleL*}
\sum_{j=0}^{n} |Df_t^j(x)|^{-1}\le W^{(c)}+\delta L_*(\eps).
\end{equation}
Let $M=M_c(\eps)$ be as in Lemma~\ref{lem:psorbretpre}. We first prove
\begin{equation}\label{eqn:bindbound}
\sum_{j=0}^{\min (n,M)} |Df_t^j(x)|^{-1}\le W^{(c)}+\delta L_*(\eps) /2.
\end{equation}
Take $N$ large enough such that
$$\sum_{j=0}^N |Df^j(f(c))|^{-1}\ge W^{(c)}-\delta/(4e).$$
By continuity, we have
\begin{equation}\label{eqn:continuitybound}
\sum_{j=0}^{\min (n,N)} |Df_t^j(x)|^{-1}\le W^{(c)}+\delta/4.
\end{equation}
So (\ref{eqn:bindbound}) holds when $\min(n,M)\le N$. If $\min(n,M)>N$, then
by (\ref{eqn:der}),
\begin{equation}\label{eqn:bindingbound}
\sum_{j=N+1}^{\min(n,M)} |Df_t^j(x)|^{-1}\le e \sum_{j=N+1}^M |Df^j(f(c))|^{-1}\le \delta/4\le \delta L_*(\eps)/4,
\end{equation}
since $L_*(\eps)\ge 1$.
Together with (\ref{eqn:continuitybound}), this implies (\ref{eqn:bindbound}).

In particular, (\ref{eqn:LleL*}) holds if $n\le M$. Let us assume now that $n>M$, so that
$f_t^M(x)\notin\widetilde{B}(\eps)$. To complete the proof, we need to prove that
\begin{equation}\label{eqn:M+12n}
\sum_{j=M+1}^n |Df_t^j(x)|^{-1}\le \delta L_*(\eps)/2.
\end{equation}
We distinguish two cases.

\noindent
{\em Case 1.} Assume $f_t^M(x)\in\widetilde{B}(c_*;\eps_0)$ for some $c_*\in\Crit$. Let $\eps_*:=\dist(f_t^{M+1}(x),f(c_*))$. Then
$\eps_*\in [\eps, 2\eps_0]$ and $(f_t^{M+1}(x), t, n-M-1)\in \mathcal{D}^{(c_*)}(\eps,\eps_*)$.
By Lemma~\ref{lem:anyentereps},
we have
$$\sum\limits_{j=M+1}^n |Df_t^j(x)|^{-1}\leq \frac{\widehat{L}(\eps,\eps_*)}{|Df_t^{M+1}(x)|}\le  \frac{4L_*(\eps)}{|Df_t^{M+1}(x)|} \left(\frac{\eps_*}{\eps}\right)^{1-\ell_{max}^{-1}}.$$
Together with (\ref{eqn:ineps_0}), this implies that
$$\sum_{j=M+1}^n |Df_t^j(x)|^{-1}\le 4\Lambda_0(\eps)^{-1} L_*(\eps)<\delta L_*(\eps)/2.$$

\noindent
{\em Case 2.} Assume $f_t^{M}(x)\notin\widetilde{B}(\eps_0)$. Let $k$ be the maximal integer with $M<k\le n$ and such
that $f_t^j(x)\not\in\widetilde{B}(\eps_0)$ for all $M<j< k$. By Proposition~\ref{prop:lemma-lambda} (ii), there exists a constant
$C>0$ such that
\begin{equation}\label{eqn:sumM+12k}
\sum\limits_{j=0}^{k-M-1}|Df_t^j(f_t^{M+1}(x))|^{-1}\leq C
\end{equation}
\begin{equation}\label{eqn:derM+12k}
|Df_t^{k-M-1}(f_t^{M+1}(x))|\ge 1/C .
\end{equation}
Thus by (\ref{eqn:notineps_0}),
\begin{equation}\label{eqn:sumM+12k'}
\sum_{j=M+1}^k |Df_t^j(x)|^{-1}\le C |Df_t^{M+1}(x)|^{-1}\le \frac{C}{\Lambda_0(\eps)}\left(\frac{\eps}{\eps_0}\right)^{1-\ell_{\max}^{-1}}<\delta L_*(\eps)/4.
\end{equation}
In particular, (\ref{eqn:M+12n}) holds if $k=n$. Assume that $k<n$. Then there exists $c_*\in\Crit$ such that $f_t^k(x)\in \tB(c_*;\eps_0)$. Let $\eps_*:=\dist(f_t^{k+1}(x), f_t(c_*))\in [\eps,\eps_0]$. Then  $$(f_t^{k+1}(x), t, n-k-1)\in \mathcal{D}^{(c_*)}(\eps,\eps_*).$$ So by Lemma~\ref{lem:anyentereps}
$$\sum_{j=k+1}^n |Df_t^j(x)|^{-1}\le |Df_t^{k+1}(x)|^{-1} 4L_*(\eps) \left(\frac{\eps_*}{\eps}\right)^{1-\ell_{\max}^{-1}}.$$
On the other hand,
$$|Df_t(f_t^k(x))|=\ell_{c_*} \eps_*^{1-\ell_{c_*}^{-1}}\ge \eps_*^{1-\ell_{\max}^{-1}},$$
so by (\ref{eqn:derM+12k}) and (\ref{eqn:notineps_0}),
\begin{align*}
|Df_t^{k+1}(x)|  = |Df_t^{M+1}(x)| |Df_t^{k-M-1}(f_t^{M+1}(x))| |Df_t(f_t^k(x))|
\ge \frac{\Lambda_0(\eps)\eps_0^{1-\ell_{\max}^{-1}}}{C} \left(\frac{\eps_*}{\eps}\right)^{1-\ell_{\max}^{-1}}.
\end{align*}
Therefore,
$$
\sum\limits_{j=k+1}^n|Df_t^j(x)|^{-1}\le \frac{4CL_*(\eps)}{\Lambda_0(\eps)\eps_0^{1-\ell_{\max}^{-1}}} \le \delta L_*(\eps) /{4}.
$$
Together with (\ref{eqn:sumM+12k'}), this implies (\ref{eqn:M+12n}). This completes the proof of the lemma.
\end{proof}


\begin{proof}[Completion of proof of Proposition~\ref{lemma-sum-bounded}]
By Lemma~\ref{lem:recbound}, it suffices to show that $L_*(\eps)$ is uniformly bounded. Arguing by contradiction, assume that this is not the case. As $L^{(c)}_*(\eps)$ is monotone decreasing in $\eps$ for each $c$,
it follows that $L_*(\eps)\to\infty$ as $\eps\to 0$. By definition of $L^{(c)}(\eps)$, this implies that there exists $\eps_k\to 0$ and $c\in\Crit$
such that $2 L^{(c)}(\eps_k)\ge L_*(\eps_k)$. However, by Lemma~\ref{lem:recbound},
we have $$L_*(\eps_k)\le 2 L^{(c)}(\eps_k)\le 2W^{(c)}+ \frac{1}{2} L_*(\eps_k),$$
provided that $k$ is large enough. It follows that $L_*(\eps_k)\le 4W^{(c)}$, a contradiction.
\end{proof}

\subsection{Essential returns}

\begin{defi}
We say that $S_n^{(c)}(t;\eps)$ is an {\em essential return time} of $f_t(c)$ into $\tB(\eps)$ if
$$P_{n}^{(c)}(t;\eps)\ge 3^{n-k} P_{k}^{(c)}(t;\eps), \text{ for all } 1\le k<n. $$
Given $C_0>0$, we define
$$\mathcal{T}_\ess^{(c)} (t;\eps)=\{k\ge 1: S_k^{(c)}(t;\eps)\text{ is an essential return time  of } f_t(c) \text{ into } \tB(\eps)\},$$
and
$$\widehat{\mathcal{T}}_\ess^{(c)}(C_0,t;\eps)=\{k\in\mathcal{T}_\ess^{(c)}(t;\eps): \widetilde{p}^{(c)}_k(t;\eps)>C_0\}.$$
\end{defi}
Refer to the end of section 2 for the definition of the notations $P_n^{(c)}$, $\widetilde{p}_k^{(c)}$, etc.

The goal of this section is to prove the following:
\begin{pro} \label{prop:totaldepth}
Given $C>0,C_0>0, \tau>1$ and $\gamma\in (0,1)$, the following hold provided that $\eps>0$ is small enough:
\begin{enumerate}
\item[(i)] For $t\in X_{n,\eps}(C)\setminus X_{n+1,\eps}(C)$, $n=1,2,\ldots$, there exists $c\in\Crit$ such that
$$\sum_{k\in \widehat{\mathcal{T}}_\ess^{(c)}(C_0,t;\eps), k\le n} \widetilde{p}^{(c)}_k(t;\eps)
\ge (\gamma C -C_0)n.$$
\item[(ii)] For $t\in Y^m_{\eps}(C,\tau)\setminus Y^{m+1}_{\eps}(C,\tau)$, $m=0,1,2,\ldots$, there exists $c\in\Crit$ and
$n\in \widehat{\mathcal{T}}_\ess^{(c)}(C_0,t;\eps)$ such that $m=S_n^{(c)}(t;\eps)$ and
$$p_n^{(c)}(t;\eps)\ge \gamma \tau \log (m+1).
$$
\end{enumerate}
\end{pro}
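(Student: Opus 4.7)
The plan is to leverage Lemma~\ref{lem:Dfnq}, Proposition~\ref{lemma-sum-bounded} and Corollary~\ref{cor:Xnepsbound} together with the essential/non-essential return dichotomy; the non-degeneracy condition $(\NVT)$ plays no role in this section. The backbone of both parts is the comparison $p_k^{(c)}\geq d_k^{(c)}$ at essential returns, derived by splitting $A(f_t(c),t,S_k^{(c)})$ into return terms (each equal to $P_j^{(c)}\leq 3^{j-k}P_k^{(c)}$ by the definition of essential) and non-return terms (controlled via Corollary~\ref{cor:Xnepsbound} and partial summation), then combining with the identity $\log P_k^{(c)}=\log|Df_t^{S_k^{(c)}}(f_t(c))|+d_k^{(c)}+(1/\ell(c_k))\log\eps^{-1}+O(1)$. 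In particular $\widetilde p_k^{(c)}=d_k^{(c)}$ at every essential return modulo an absolute constant.

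For \textbf{part (i)}, choose $c$ with $\sum_{j=1}^n d_j^{(c)}(t;\eps)>Cn$; this exists because $t\in X_{n,\eps}(C)\setminus X_{n+1,\eps}(C)$. Strong induction on $k\leq n$ using the definition of essential return yields that for every $k$ there is an essential $k^{\ast}\leq k$ with $P_k^{(c)}\leq 3^{k-k^{\ast}}P_{k^{\ast}}^{(c)}$ (take $k^{\ast}=k$ if $k$ is essential; otherwise reduce to a strictly smaller index and iterate). Converting this into a depth statement via Lemma~\ref{lem:Dfnq} gives, for non-essential $k$, $d_k^{(c)}\leq d_{k^{\ast}}^{(c)}-(k-k^{\ast})(\log\Lambda(\eps)-\log 3-O(1))$; since $\Lambda(\eps)\to\infty$, summing the resulting triangular sum over the intervals between consecutive essential indices caps the non-essential contribution to $\sum_{j\leq n}d_j^{(c)}$ by $\eta n$ for any prescribed $\eta>0$, once $\eps$ is small. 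Hence $\sum_{k\in\mathcal T_\ess^{(c)},\,k\leq n}d_k^{(c)}\geq(1-\eta)Cn\geq\gamma Cn$, and by the backbone $\sum_{k\in\mathcal T_\ess^{(c)},\,k\leq n}\widetilde p_k^{(c)}\geq\gamma Cn$ as well. Discarding the at most $n$ indices with $\widetilde p_k^{(c)}\leq C_0$ loses at most $C_0n$, and delivers the asserted $(\gamma C-C_0)n$ bound.

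For \textbf{part (ii)}, the assumption $t\in Y_\eps^m(C,\tau)\setminus Y_\eps^{m+1}(C,\tau)$ produces $c_0,c_1\in\Crit$ with $\dist(f_t^{m+1}(c_0),c_1)<\eps^{1/\ell(c_1)}(m+1)^{-\tau}$, while membership in $Y_\eps^m$ forces $d_\ell^{(c_0)}(t;\eps)\leq\tau\log m+O(1)$ for every return index $\ell<k$, where $k$ denotes the index satisfying $S_k^{(c_0)}(t;\eps)=m$. The first inequality gives $d_k^{(c_0)}>\tau\log(m+1)-O(1)$, which exceeds all earlier depths. I would verify that this $k$-th return is essential for $c_0$ by comparing $P_k^{(c_0)}$ to each prior $P_j^{(c_0)}$: Lemma~\ref{lem:Dfnq} bounds the derivative ratio below by $\Lambda(\eps)^{k-j-1}\exp(-\ell_{\max}\sum_{i=j+1}^{k-1}d_i^{(c_0)})$, and the depth-loss exponent is at most $\ell_{\max}(k-j)\tau\log m$ by the $Y_\eps^m$ bound; combined with $\dist_j^{(c_0)}/\dist_k^{(c_0)}\gtrsim e^{d_k^{(c_0)}-d_j^{(c_0)}}$ and $\log\Lambda(\eps)\gg 1$, a choice of $\eps$ sufficiently small forces $P_k^{(c_0)}\geq 3^{k-j}P_j^{(c_0)}$ for every $j<k$, so $k\in\mathcal T_\ess^{(c_0)}$. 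The backbone then yields $p_k^{(c_0)}\geq d_k^{(c_0)}\geq\gamma\tau\log(m+1)$ and $\widetilde p_k^{(c_0)}>C_0$ for $m$ large; the finite range of small $m$ is absorbed by shrinking $\eps$ so that no orbit can approach any critical point closer than $\eps^{1/\ell(c)}(m+1)^{-\tau}$. Setting $c=c_0$ and $n=k$ completes the proof.

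The hard part will be quantifying the essentiality check in part~(ii) uniformly in $m$: both the $X_\eps(C)$ sum bound $\sum d_i^{(c_0)}\leq Ck$ and the pointwise $Y_\eps^m$ bound $d_\ell^{(c_0)}\leq\tau\log m$ must be combined with Lemma~\ref{lem:Dfnq}'s return-count estimate $k=O(m/\log\Lambda(\eps))$ so that $(k-j)\log\Lambda(\eps)$ dominates $(k-j)(\log 3+\ell_{\max}\tau\log m)+O(\log\eps^{-1})$ on the full range $j<k\leq n$. Balancing these quantities against $\log\Lambda(\eps)$ is the technical heart of the section and dictates the threshold for $\eps$ in terms of $C,C_0,\tau,\gamma$.
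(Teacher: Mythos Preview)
Your backbone claim—that essentiality alone yields $p_k^{(c)}\ge d_k^{(c)}$ up to an absolute constant—is where the argument breaks, and the gap propagates through both parts. Essentiality of $S_k^{(c)}$ gives $P_j^{(c)}\le 3^{j-k}P_k^{(c)}$ for every $j<k$, hence $\sum_{j<k}P_j^{(c)}\le\tfrac12 P_k^{(c)}$; together with Lemma~\ref{lemma-local-bounded-distortion} on the non-return stretches this yields only $\A(f_t(c),t,S_k^{(c)})\le C'P_k^{(c)}$, i.e.\ $p_k^{(c)}\ge-\log C'$, \emph{not} $p_k^{(c)}\ge d_k^{(c)}$. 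Indeed nothing in the definition prevents $P_{k-1}^{(c)}=\tfrac13 P_k^{(c)}$, in which case $\A\ge P_{k-1}^{(c)}$ forces $p_k^{(c)}\le\log 3$ however large $d_k^{(c)}$ may be. The same missing factor of $e^{d_k}$ spoils your depth conversion in part~(i): translating $P_k<3^{k-k^\ast}P_{k^\ast}$ through Lemma~\ref{lem:Dfnq} leaves an uncontrolled term $\ell_{\max}\sum_{k^\ast\le l<k}d_l$ in the exponent, so the resulting inequality is a Gronwall-type recursion on the partial depth sums that does not force $\sum_{\text{non-ess}}d_k\le\eta n$. Your direct essentiality check in part~(ii) fails for the identical reason: the competing exponent $\ell_{\max}(k-j)\tau\log m$ cannot be dominated by $(k-j)\log\Lambda(\eps)$ uniformly in $m$, since $\Lambda(\eps)$ is fixed once $\eps$ is chosen while $m$ is unbounded.

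The paper closes this gap by introducing \emph{free returns}, defined through a binding time $\hS_i^{(c)}$. The key Lemma~\ref{lemma-estimate-depth} shows that between a return $S_i$ and the next free return $S_j=\tS_i^{(c)}$ one has simultaneously $\sum_{i<k<j}d_k\le\rho_1 d_i$ and the strengthened ratio $\log(P_j^{(c)}/P_i^{(c)})\ge d_j-\rho_1 d_i+(\log 3)(j-i)$; the extra $d_j$ here is exactly what essentiality by itself does not supply. Since essential returns are free (Lemma~\ref{lem:deep2free}), Lemma~\ref{lem:deepdepth} then gives $\sum_{i<k<j}d_k\le\rho\, d_i$ and $p_j^{(c)}\ge d_j-\rho\, d_i$ for consecutive essential indices $i<j$. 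Part~(i) follows by telescoping these inequalities, and for part~(ii) the deepest-so-far property $d_n>d_k$ for all $k<n$ is incompatible with $d_n\le\rho\, d_{n_0}$, so the last essential index $n_0\le n$ must equal $n$; then $p_n^{(c)}\ge(1-\rho)d_n^{(c)}\ge\gamma\tau\log(m+1)$.
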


We shall need the following lemma which is~\cite[Proposition 5.6]{S}.
\begin{lem}\label{lemma-local-bounded-distortion}
For any $\varepsilon>0$ small enough, there exists a constant $\kappa(\varepsilon)>0$ such that for $|t|\le \eps$ and $x\in [0,1]$,
if $n$ is an integer such that $f_t^j(x)\notin\widetilde{B}(\eps)$ for $0\leq j <n $ and $f_t^n(x)\in\widetilde{B}(c;\eps)$ for some
$c\in\Crit$, then
\begin{equation}\label{eqa-local}
\A(x,t,n)\leq \kappa(\varepsilon)\cdot\frac{|Df_t^n(x)|}{|\widetilde{B}(c;\eps)|}
\end{equation}
and such that
\begin{equation}\label{eqa-kappa}
\kappa(\varepsilon)\to 0\text{ as }\varepsilon\to 0
\end{equation}
\end{lem}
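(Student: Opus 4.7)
The plan is to rewrite the sum using backward expansion and then bound each term via Proposition~\ref{prop:lemma-lambda}. Setting $y_j:=f_t^j(x)$ and factoring out $|Df_t^n(x)|$, we have
$$\A(x,t,n)=|Df_t^n(x)|\sum_{j=0}^{n-1}\frac{1}{|Df_t^{n-j}(y_j)|\,\dist(y_j,\Crit)},$$
so the target inequality is equivalent to showing
$$S:=\sum_{j=0}^{n-1}\frac{1}{|Df_t^{n-j}(y_j)|\,\dist(y_j,\Crit)}\leq\frac{\kappa(\eps)}{|\tB(c;\eps)|}$$
with $\kappa(\eps)\to 0$. Since $y_n\in\tB(c;2\eps)$ and $y_{j+1},\ldots,y_{n-1}\notin\tB(\eps)$, Proposition~\ref{prop:lemma-lambda}(i) is ready to deliver the strong lower bound $|Df_t^{n-j}(y_j)|\ge\Lambda(\eps)D_c(\eps)^{-1}\exp(\eps^{\alpha(\eps)}(n-j))$ whenever the starting point lies within $4\eps$ of $\CV$, while (ii) gives the weaker unconditional bound of order $\eps^{1-\ell_{\max}^{-1}}\exp(\eps^{\alpha(\eps)}(n-j))$.

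I would partition $\{0,\ldots,n-1\}$ into three classes and handle them separately. First, when $y_j\in\tB(c';2\eps)$ for some $c'\in\Crit$, the image $y_{j+1}$ lies within $2\eps$ of $f_t(c')\in\CV$; applying (i) to $y_{j+1}$ with $s=n-j-1$ and combining with $|Df_t(y_j)|\gtrsim\dist(y_j,c')^{\ell(c')-1}$ and $\dist(y_j,c')^{\ell(c')}\ge\eps$, bounds the $j$-th term by $CD_c(\eps)\eps^{-1}\Lambda(\eps)^{-1}\exp(-\eps^{\alpha(\eps)}(n-j-1))$, and the geometric sum is $O((\Lambda(\eps)\eps^{\alpha(\eps)})^{-1})/|\tB(c;\eps)|$. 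Second, when $y_j\notin\tB(2\eps)$ and $\dist(y_j,\CV)\le 4\eps$, Proposition~\ref{prop:lemma-lambda}(i) applies to $y_j$ directly, and the surplus factor $\eps^{1-1/\ell_{\min}}$, coming from $\dist(y_j,\Crit)\ge\eps^{1/\ell_{\min}}$ measured against the target scale $\eps^{1/\ell(c)}$ and using $\ell_{\min}>1$, produces a contribution $\kappa_2(\eps)/|\tB(c;\eps)|$ with $\kappa_2(\eps)\to 0$. The remaining class consists of indices $j$ with $y_j\notin\tB(2\eps)$ and $\dist(y_j,\CV)>4\eps$; here a term-by-term estimate via (ii) fails because the prefactor $A\eps^{1-\ell_{\max}^{-1}}$ is too weak to match the scale $|\tB(c;\eps)|=2\eps^{1/\ell(c)}$.

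The main obstacle is this third class. To handle it I would pull back the interval $\tB(c;\eps)$ through the orbit to obtain intervals $V_j\ni y_j$ with $f_t^{n-j}(V_j)\subset\tB(c;2\eps)$, and exploit the hypothesis $y_0,\ldots,y_{n-1}\notin\tB(\eps)$ to argue that each $V_j$ avoids $\Crit$, so that $f_t^{n-j}|_{V_j}$ is a diffeomorphism. Combined with a Koebe/cross-ratio bounded-distortion estimate this yields $|V_j|\lesssim|\tB(c;\eps)|/|Df_t^{n-j}(y_j)|$ with distortion constants controlled uniformly in $\eps$; bounded overlap of the $V_j$ together with $\sum|V_j|\le 1$ then collapses the remaining contribution to $O(1)/|\tB(c;\eps)|$ times a factor that tends to zero as $\eps\to 0$ (using again that $\Lambda(\eps)\to\infty$ and $\eps^{\alpha(\eps)}\to 1$). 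This is precisely the argument executed in Proposition~5.6 of~\cite{S}, which the present lemma quotes; once invoked, it combines with the first two class estimates to yield $S\le\kappa(\eps)/|\tB(c;\eps)|$ with $\kappa(\eps)\to 0$, completing the proof.
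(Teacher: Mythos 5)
The paper gives no proof of this lemma at all: it is quoted verbatim as Proposition~5.6 of~\cite{S}, and since your argument also ends by invoking that very proposition for the decisive third class, your treatment ultimately coincides with the paper's (a citation). As a self-contained argument your sketch would still need repair --- the class-one bound $O\bigl((\Lambda(\eps)\eps^{\alpha(\eps)})^{-1}\bigr)$ only helps if $\Lambda(\eps)\eps^{\alpha(\eps)}\to\infty$, which is not guaranteed and should be replaced by the $\Lambda(\eps)^{m}$-chaining between consecutive returns as in Lemma~\ref{lem:Dfnq}, and the class-three step as written (Koebe plus $\sum_j |V_j|\le 1$) yields only $O\bigl(\eps^{-1/\ell_{min}}\bigr)\cdot|\tB(c;\eps)|^{-1}$ rather than a factor tending to $0$ --- but relative to the paper this is moot, since the lemma is simply cited there.
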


We shall also need the following lemma which is~\cite[Lemma 2.1]{S}.
\begin{lem} \label{lem:theta0}
There exists a constant $\theta_0>0$ such that
for any $(x,t)\in [0,1]\times [-1,1]$ and any integer $n\ge 1$ with $\A(x,t, n)<\infty$, putting
$$J=\left[x-\frac{\theta_0} {\A(x,t, n)}, x+\frac{\theta_0}{ \A(x,t, n)}\right]\cap [0,1],$$ we have that $f_t^n|J$ is a diffeomorphism and
$$e^{-1} |Df_t^j(x)|\le |Df_t^j(y)|\le e \, |Df_t^j(x)|$$
holds for all $y\in J$ and $0\le j\le n.$
\end{lem}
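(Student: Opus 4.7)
The plan is to reduce both parts to an amortized comparison between depth sums and $\widetilde{p}$-values at essential returns. For (i), the hypothesis $t \in X_{n,\eps}(C) \setminus X_{n+1,\eps}(C)$ picks out a critical point $c$ with $\sum_{j=1}^n d_j^{(c)}(t;\eps) > Cn$, and the key reduction is an amortized inequality of the form
\[
\sum_{j=1}^n d_j^{(c)}(t;\eps) \le (1 + o_\eps(1))\sum_{k \in \mathcal{T}_\ess^{(c)}(t;\eps),\, k \le n} \widetilde{p}_k^{(c)}(t;\eps) + o_\eps(1)\cdot n.
\]
This yields $\sum_{k \in \mathcal{T}_\ess^{(c)}} \widetilde{p}_k \ge \gamma Cn$ for $\eps$ small; discarding essential returns with $\widetilde{p}_k \le C_0$ costs at most $C_0 n$, giving the stated bound $(\gamma C - C_0)n$.

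To establish the amortized inequality, two pieces are needed. First, for non-essential $j$, iterating the defining property (if $j$ is not essential, some $l<j$ has $P_j < 3^{j-l}P_l$; iterate on $l$) produces an essential $k \le j$ with $P_j \le 3^{j-k}P_k$. Using $P_j \asymp |Df_t^{S_j}(f_t(c))|\cdot e^{d_j}/\eps^{1/\ell}$ and the lower bound $|Df_t^{S_j}(f_t(c))|/|Df_t^{S_k}(f_t(c))| \gtrsim \Lambda(\eps)^{j-k}\exp(-\ell_{\max}\sum_{l=k+1}^j d_l)$ from Lemma~\ref{lem:Dfnq}, this gives
\[
d_j - d_k \le (j-k)\bigl(\log 3 - \log\Lambda(\eps)\bigr) + \ell_{\max}\sum_{l=k+1}^j d_l + O(\log 1/\eps);
\]
for $\eps$ small so that $\log\Lambda(\eps) \gg \log 3 + C\ell_{\max}$, careful summation over inter-essential blocks absorbs the depth accumulation. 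Second, for essential $k$, I bound $\A(f_t(c),t,S_k)$ by splitting over inter-return blocks $[S_{i-1}+1, S_i-1]$ (with $S_0 := -1$) plus return-time contributions $P_i$ ($i<k$). Essentiality gives $\sum_{i<k}P_i \le P_k/2$. Lemma~\ref{lemma-local-bounded-distortion} applied block-by-block bounds the $i$-th block by $\kappa(\eps)|Df_t^{S_i}(f_t(c))|/|\tB(c^{(i)};\eps)| \lesssim \kappa(\eps)e^{-d_i}P_i$, summing geometrically to $O(\kappa(\eps))P_k$. Hence $\A \le (\tfrac12 + o_\eps(1))P_k$, so $p_k \ge \log 2 - o_\eps(1)$; a refinement that tracks the $e^{-d_k}$ factor at the last block sharpens this to $p_k \ge d_k - o_\eps(1)$ when $d_k$ is large, so $\widetilde{p}_k = d_k + o_\eps(1)$.

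For (ii), the hypothesis yields $c, c' \in \Crit$ with $\dist(f_t^{m+1}(c'), c) < \eps^{1/\ell(c)}/(m+1)^\tau$, so $q_\eps(f_t^{m+1}(c')) \ge \tau\log(m+1) - O(1)$. Let $n$ be the return index with $S_n^{(c')}(t;\eps) = m$; then $d_n^{(c')} \ge \tau\log(m+1) - O(1)$. The return $n$ is essential because $P_n^{(c')} \gtrsim (m+1)^\tau |Df_t^m(f_t(c'))|/\eps^{1/\ell}$ is enormous, while earlier $P_k^{(c')}$ are polynomially bounded in $m$ (using $t \in X_\eps(C)$ together with Lemma~\ref{lem:Dfnq}'s derivative growth). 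The same distortion analysis as in (i), now applied to $\A(f_t(c'), t, S_n)$, gives $p_n \ge d_n - O(\log 1/\eps) \ge \gamma\tau\log(m+1)$ for $\eps$ small, and $\widetilde{p}_n > C_0$ holds once $m$ exceeds a threshold depending only on $C_0$ and $\eps$; the finitely many small-$m$ cases are absorbed by choosing $\eps$ small enough that $d_n^{(c')} \ge \tau\log(m+1)-O(1)$ already exceeds $C_0$ whenever the hypothesis is nonvacuous.

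The main obstacle is the amortized inequality in (i): balancing the derivative-growth gain $\Lambda(\eps)^{j-k}$ against the depth accumulation $\ell_{\max}\sum d_l$ and the non-essentiality loss $3^{j-k}$ requires a carefully structured inductive summation over inter-essential blocks, while keeping $\eps$ small enough that $\log\Lambda(\eps)$ dominates but the $o_\eps(1)$ error terms remain negligible. A secondary difficulty is sharpening the essential-return bound from $p_k \ge \log 2 - o_\eps(1)$ (immediate from distortion) to the $p_k \approx d_k$ needed for the amortization to be tight, which requires both the shrinking distortion constant $\kappa(\eps) \to 0$ and the block-wise $e^{-d_i}$ savings to combine favorably.
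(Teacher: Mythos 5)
Your proposal does not address the statement of Lemma~\ref{lem:theta0} at all: what you have written is an argument for Proposition~\ref{prop:totaldepth} (the essential-return depth estimates for parameters in $X_{n,\eps}(C)\setminus X_{n+1,\eps}(C)$ and $Y^m_\eps(C,\tau)$), whereas the statement you were asked to prove is the bounded-distortion lemma: there is a universal $\theta_0>0$ such that on the interval $J$ of radius $\theta_0/\A(x,t,n)$ around $x$, the map $f_t^n|J$ is a diffeomorphism and $|Df_t^j(y)|$ stays within a factor $e$ of $|Df_t^j(x)|$ for all $0\le j\le n$. Nothing in your text engages with this: there is no control of $\log|Df_t^j(y)|-\log|Df_t^j(x)|$, no use of the quantity $\A(x,t,n)$ as a distortion bound, and no argument that $f_t^n$ is injective on $J$. (For orientation: in the paper this lemma is not proved but quoted from~\cite[Lemma 2.1]{S}, while the material you wrote about corresponds to the paper's proof of Proposition~\ref{prop:totaldepth} via Lemmas~\ref{lemma-estimate-depth} and~\ref{lem:deepdepth}.)

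A proof of the actual statement runs along standard lines and is quite short: by non-flatness of the critical points there is a constant $C_1$ with $|D^2f_t(z)|\le C_1|Df_t(z)|/\dist(z,\Crit)$ away from $\Crit$, uniformly in $t\in[-1,1]$. Let $J'$ be the maximal subinterval of $J$ containing $x$ on which the asserted distortion bounds hold for all $j\le n$ (a continuity/maximality argument). For $y\in J'$ one estimates $\bigl|\log|Df_t^j(y)|-\log|Df_t^j(x)|\bigr|\le \sum_{i<j} C_1\,|f_t^i(y)-f_t^i(x)|/\dist(f_t^i(x),\Crit)'$, up to a bounded factor coming from comparing $\dist(f_t^i(y),\Crit)$ with $\dist(f_t^i(x),\Crit)$, and then uses the inductive bound $|f_t^i(y)-f_t^i(x)|\le e\,|Df_t^i(x)|\,|y-x|$ to get a total of at most $C_2\, e\,\A(x,t,n)\,|y-x|\le C_2 e\,\theta_0$. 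Choosing $\theta_0$ small makes this strictly less than $1$, which both closes the maximality argument (so $J'=J$) and forces $Df_t^j$ to be nonvanishing on $J$, hence $f_t^n|J$ is a diffeomorphism. Some argument of this shape is what is required; the essential-return machinery you developed is irrelevant here (indeed Lemma~\ref{lem:theta0} is an ingredient used to prove those later results, so invoking them would also be circular).
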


In the following, fix $C>1$, $\gamma\in (0,1)$ and denote $\rho= 1-\sqrt{\gamma}$, $\rho_1=\rho/4$, $\rho_2=\rho_1/(2\ell_{max})$.
Let $\eps>0$ denote a small constant and we fix a parameter $t\in [-\eps,\eps]$.
For simplicity, we shall drop $t, \eps$ from the notations. So
$S_i^{(c)}=S_i^{(c)}(t;\eps)$, $d^{(c)}_i= d^{(c)}_i(t;\eps)$, etc.

\subsubsection{Free returns}
Define
$$\hS_i^{(c)}=\sup\{S>S_i^{(c)}: \A(f_t^{S_i^{(c)}+2}(c), t, S-S_i^{(c)}) \le \theta_0 e^{(d_i^{(c)}-1)\ell(c') }\eps^{-1}\},$$
and
$$\tS_i^{(c)}=\inf\{S> \hS_i^{(c)}: f_t^{S+1}(c)\in \tB(\eps)\},$$
where $c'$ denote the critical point of $f$ which is closest to $f_t^{S_i^{(c)}+1}(c)$.

\begin{lem}\label{lemma-estimate-depth}
Consider $t\in X_{n,\eps}(C)$, $c\in\Crit$ and $1\le i<n$.
Then \begin{equation}\label{equation-depth-bound}
\sum\limits_{k=S_i^{(c)}+2}^{\hS_i^{(c)}+1} q_\eps(f_t^{k}(c))<\rho_1\cdot d^{(c)}_i.
\end{equation}
Moreover, if there exists $j\le n$ such that $S_j^{(c)}=\tS_i^{(c)}$, then
\begin{equation}\label{eqn:estQ}
\log \frac{P_{j}^{(c)}}{P_{i}^{(c)}}> d^{(c)}_j -\rho_1 d^{(c)}_i+ (\log 3) \cdot(j-i).
\end{equation}
\end{lem}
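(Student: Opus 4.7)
Both parts of the Lemma rest on a common mechanism: the definition of $\hS_i^{(c)}$ is engineered so that the $A$-threshold converts, via Lemma~\ref{lem:theta0}, into distortion control on a definite scale, and that distortion control makes the binding portion of the orbit of $f_t(c)$ shadow the orbit of $f_t(c')$, where $c'$ is the critical point closest to $f_t^{S_i^{(c)}+1}(c)$. The summability of $f$ at $c'$ then supplies all needed bookkeeping.

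For part~(i) I would set $x_0 = f_t^{S_i^{(c)}+2}(c)$ and $n = \hS_i^{(c)} - S_i^{(c)}$. The bound $A(x_0,t,n) \leq \theta_0 e^{(d_i^{(c)}-1)\ell(c')}\eps^{-1}$ inserted into Lemma~\ref{lem:theta0} yields an interval $J$ around $x_0$ of half-length $\theta_0/A \geq \eps\, e^{-(d_i^{(c)}-1)\ell(c')}$, on which $f_t^n|_J$ is a diffeomorphism with distortion at most $e$. The normalized form of $f_t$ near $c'$ gives $|x_0 - f_t(c')| = |f_t^{S_i^{(c)}+1}(c) - c'|^{\ell(c')} \leq e^{-(d_i^{(c)}-1)\ell(c')}\eps$, so $f_t(c') \in J$, and throughout the binding period the orbits of $x_0$ and $f_t(c')$ remain comparable in derivative. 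Using Lemma~\ref{lem:psorbretpre} I would compare $|Df_t^k(f_t(c'))|$ with $|Df^k(f(c'))|$ and then invoke Proposition~\ref{lemma-sum-bounded} to bound $\sum_k |Df_t^k(x_0)|^{-1}$ by $W^{(c')}+\delta$. A return of depth $Q$ at step $k$ forces $|Df_t(\cdot)| \lesssim e^{-Q(\ell-1)}\eps^{(\ell-1)/\ell}$, hence multiplies the next term of the summable series by $e^{Q(\ell-1)}\eps^{-(\ell-1)/\ell}$; tallying these multiplicative jumps against the threshold $e^{(d_i^{(c)}-1)\ell(c')}\eps^{-1}$ and using $\Lambda(\eps)\to\infty$ yields $\sum_s Q_s < \rho_1 d_i^{(c)}$ for $\eps$ small.

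For part~(ii), I would start from the telescoping
$$\log\frac{P_j^{(c)}}{P_i^{(c)}} = \log|Df_t^{S_j^{(c)} - S_i^{(c)}}(f_t^{S_i^{(c)}+1}(c))| + \log\dist(f_t^{S_i^{(c)}+1}(c),\Crit) - \log\dist(f_t^{S_j^{(c)}+1}(c),\Crit),$$
splitting the derivative as $|Df_t(f_t^{S_i^{(c)}+1}(c))| \cdot |Df_t^{S_j^{(c)} - S_i^{(c)}}(f_t^{S_i^{(c)}+2}(c))|/|Df_t(f_t^{S_j^{(c)}+1}(c))|$, so that the $\log\eps$ pieces arising from all four distance/derivative factors at the two returns cancel identically. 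Lemma~\ref{lem:Dfnq} applied to $y = f_t^{S_i^{(c)}+1}(c)$ with $n = S_j^{(c)} - S_i^{(c)}$ (second inequality, since $f_t^n(y) = f_t^{S_j^{(c)}+1}(c) \in \tB(\eps)$) gives
$$|Df_t^n(f_t(y))| \geq \Lambda(\eps)^{j-i}\exp\bigl(-\ell_{\max}(d_j^{(c)} + \textstyle\sum_s Q_s)\bigr).$$
Inserting $\sum_s Q_s < \rho_1 d_i^{(c)}$ from part~(i), using $\Lambda(\eps) \geq 3$ for $\eps$ small, and tracking the depth-weighted powers of $\ell(c'), \ell(c''_j)$ in the remaining factors so that the coefficient of $d_j^{(c)}$ works out to $1$, the terms assemble into $\log P_j^{(c)}/P_i^{(c)} > d_j^{(c)} - \rho_1 d_i^{(c)} + (\log 3)(j-i)$.

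The main obstacle is the depth bound in part~(i): summability bounds $\sum |Df^j|^{-1}$ but gives no direct bound on depths, so multiplicative drops in $|Df|^{-1}$ at deep returns must be translated into an additive bound on $\sum_s Q_s$. Producing the specific small constant $\rho_1 = (1-\sqrt{\gamma})/4$ -- rather than the cruder $O(d_i)$ a naive estimate yields -- forces $\Lambda(\eps)$ to dominate the distortion constants $\kappa(\eps)$ of Lemma~\ref{lemma-local-bounded-distortion}, the constant $\theta_0$ of Lemma~\ref{lem:theta0}, and the near-critical exponent $\ell_{\max}$, which is possible only for $\eps$ sufficiently small.
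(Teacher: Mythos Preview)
Your proposal has real gaps in both parts, and in each case the missing idea is the same: the hypothesis $t\in X_{n,\eps}(C)$ must be applied to the \emph{shadowing} critical point $c'$ (the paper's $c_i$), not just to $c$.

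\textbf{Part (i).} You propose to bound $\sum_s Q_s$ via the summability of the $x_0$-orbit (Proposition~\ref{lemma-sum-bounded}) together with the $A$-threshold. But summability only controls $\sum_k|Df_t^k(x_0)|^{-1}$; it does not bound depths. A single extremely deep return followed by rapid recovery is perfectly compatible with both a small summable series and a moderate $A$, so ``tallying multiplicative jumps'' against the threshold cannot produce a bound of the form $\sum_s Q_s<\rho_1 d_i$. The paper's argument is different: from the $A$-threshold and Lemma~\ref{lem:Dfnq} applied to the \emph{$c'$-orbit} (whose depths are a~priori bounded since $t\in X_{n,\eps'}(C+a+1)$), one first bounds the \emph{number} $M$ of returns of the $c'$-orbit in the binding window by $\rho_2 d_i/(C+a+1)$; then the $X_{n,\eps}(C)$ hypothesis (for the critical point $c'$) gives $\sum_k q_{\eps'}(f_t^{k+1}(c'))\le (C+a+1)M\le \rho_2 d_i$; finally distortion transfers this to the $x_0$-orbit. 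You never use $X_{n,\eps}(C)$ for $c'$, and without it the argument cannot close.

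\textbf{Part (ii).} Applying Lemma~\ref{lem:Dfnq} directly gives
\[
|Df_t^{S_j-S_i}(x)|\ge \Lambda(\eps)^{j-i}\exp\Bigl(-\ell_{\max}\sum_{k=i+1}^{j}d_k\Bigr),
\]
and combining with $|Df_t(y)||y-c_i|/\bigl(|Df_t(z)||z-c_j|\bigr)\asymp e^{\ell(c_j)d_j-\ell(c_i)d_i}$ (where $z=f_t^{S_j+1}(c)$) yields a coefficient $\ell(c_j)-\ell_{\max}\le 0$ in front of $d_j$, not $+1$ as claimed. The ``$\log\eps$ pieces cancel identically'' but the depth pieces do not. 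The paper therefore proves a separate inequality (the paper's (3.16)) and, in the case where $d_j$ is large, abandons the chain-rule estimate entirely: it uses that the shadowing point $f_t^{S_j-S_i-1}(v_i)$ has \emph{shallow} depth (at most $\rho_2 d_i$, from part (i) applied to $c'$), hence is far from $c_j$, while $z=f_t^{S_j-S_i-1}(x)$ has depth $d_j$, hence is close to $c_j$; the resulting separation, divided by $|x-v_i|\asymp e^{-\ell(c_i)d_i}\eps$, gives the required lower bound on $|Df_t^{S_j-S_i-1}(x)|$ with no $d_j$-loss. This geometric step is absent from your outline and cannot be replaced by Lemma~\ref{lem:Dfnq}.
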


\begin{proof}
Assume $\eps>0$ small and let $a=2\ell_{max}/(\ell_{min}-1)$, $\eps'=e^a \eps$.
Let $c_k$ denote the critical point of $f$ which is closest to $f_t^{S_k^{(c)}+1}(c)$.
For simplicity of notation, we shall write $S_k=S_k^{(c)}$, 
$\hS_k=\hS_k^{(c)}$ and $d_k=d_k^{(c)}$ for each $k$.
Let $y= f_t^{S_i+1}(c)$, $x=f_t(y)$, $v=f_t(c)$ and $v_i=f_t(c_i)$.
Note that $\A(x, t, \hS_i-S_i)\le \theta_0/|v_i-x|.$
So by Lemma \ref{lem:theta0}, for $0\le k< \hS_i-S_i$,
we have
\begin{equation}\label{eqn:derv2hatv}
e^{-1} |Df_t^{k+1}(x)|\le |Df_t^{k+1}(v_i)|\le e |Df_t^{k+1}(x)|,
\end{equation}
and
\begin{equation}\label{eqn:derlength}
|Df_t^{k+1}(x)|\ge e^{-1} \frac{\dist(f_t^{k+1}(v_i), f_t^{k+1}(x))}{|v_i-x|}.
\end{equation}

We shall first prove that
\begin{equation}\label{eqn:Mets}
M:=\#\{1\le k\le \hS_i-S_i: f_t^k(v_i)\in \tB(\eps')\}\le \rho_2 d_i (C+a+1)^{-1}<n.
\end{equation}
and
\begin{equation}\label{eqn:qesp'c'}
\sum_{k=0}^{\hS_i-S_i} q_{\eps'}(f_t^{k+1}(c_i))\le (C+a+1) M\le \rho_2 d_i.
\end{equation}

Indeed, $q_{\eps'}(z)\le q_\eps(z) +a+1$ holds for each $z\in [0,1]$, thus
\begin{equation}\label{eqn:Xeps}
t\in X_{n,\eps}(C)\subset X_{n,\eps'}(C+a+1).
\end{equation}
Therefore (\ref{eqn:qesp'c'}) will follow once we prove (\ref{eqn:Mets}).
Let $T_1<T_2<\cdots$ be all the positive integers such that $f_t^{T_k+1}(c_i)\in \tB(\eps')$ and $p_k$ be the critical point of $f$
which is closest to $f_t^{T_k+1}(c_i)$. Then
for each $1\le m< n$, by Lemma~\ref{lem:Dfnq}, we have
\begin{equation}\label{eqn:DfTm}
|Df_t^{T_m+1}(f_t(c_i))|\ge \left(\Lambda(\eps') e^{-\ell_{max}(C+a+1)}\right)^m,
\end{equation}
hence
\begin{multline*}
\A(v_i,t,T_m+1)  \ge \frac{|Df_t^{T_m}(f_t(c_i))|}{|f_t^{T_m+1}(c_i)-p_m|} =\frac{|Df_t^{T_m+1}(f_t(c_i))|}{|Df_t(f_t^{T_m+1}(c_i))||f_t^{T_m+1}(c_i)-p_m|}\\
\ge |Df_t^{T_m+1}(f_t(c_i))|(\eps')^{-1} \ge \left(\Lambda(\eps') e^{-\ell_{max}(C+a+1)}\right)^m (\eps')^{-1}.
\end{multline*}
%
On the other hand, by Lemma~\ref{lem:theta0}, we have $$\A(v_i,t, \hS_i-S_i)\asymp \A(x, t, \hS_i-S_i)\le \theta_0 e^{(d_i-1)\ell(c_i)} \eps^{-1}.$$
The inequality (\ref{eqn:Mets}) follows.

Let us now prove (\ref{equation-depth-bound}). Indeed, by (\ref{eqn:derv2hatv}), for each $0\le k<\hS_i-S_i$, we have
$|Df_t(f_t^k(v_i))|\le e^2 |Df_t(f_t^k(x))|$, so $q_{\eps'}(f_t^k(v_i))\ge q_{\eps}(f_t^k(x))$.
Thus
\begin{equation}\label{eqn:deptbdstrong}
\sum_{i<k<j} d_k\le \sum_{k=0}^{\hS_i-S_i} q_{\eps'}(f_t^{k+1}(c_i))\le \rho_2 d_i,
\end{equation}
which implies (\ref{equation-depth-bound}) since $\rho_2<\rho_1$.

To obtain (\ref{eqn:estQ}) it suffices to prove the following two inequalities:
\begin{equation}\label{eqn:DSjsimple}
|Df_t^{S_j-S_i-1}(x)|\ge \Lambda_1(\eps)^{j-i} \left(D_{c_j}(\eps)\right)^{-1},
\end{equation}
and
\begin{equation}\label{eqn:estimate-derivative}
|Df_t^{S_j-S_i-1}(x)|\ge \kappa \exp\left(\ell(c_i)d_i-\rho_2\ell_{max}d_i\right)\left(D_{c_j}(\eps)\right)^{-1},
\end{equation}
where $\Lambda_1(\eps)\to \infty$ as $\eps\to 0$ and $\kappa>0$ is a constant.

Indeed, combining these two inequalities, we obtain
$$U:=|Df_t^{S_j-S_i-1}(x)| D_{c_j}(\eps) \ge 3^{j-i} \exp \left(\ell(c_i) d_i-\rho_1 d_i+\ell_{\max}\right).$$
Since
$$\frac{P_j^{(c)}}{P_i^{(c)}}= U \frac{|Df_t(y)||y-c_i|}{|f_t^{S_j+1}(c)-c_j|D_{c_j}(\eps)}\ge U \exp\left(-\ell(c_i) d_i + d_j-1\right) ,$$
the inequality (\ref{eqn:estQ}) follows.

Let us prove (\ref{eqn:DSjsimple}). Applying Proposition~\ref{prop:lemma-lambda} (i),  we obtain
$$|Df_t^{S_{j}-S_{j-1}-1}(f_t^{S_{j-1}-S_i}(x))|\ge \Lambda(\eps)/D_{c_j}(\eps).$$
Thus (\ref{eqn:DSjsimple}) holds  with $\Lambda_1(\eps)=\Lambda(\eps)$ if $j=i+1$.
When $j>i+1$, $S_{j-1}-S_i$ is of the form $T_m+1$ for some $j-i-1\le m\le M<n$, so combining (\ref{eqn:DfTm}) with the last inequality, we obtain that (\ref{eqn:DSjsimple}) holds with a suitable choice of $\Lambda_1(\eps)$.

Finally let us prove (\ref{eqn:estimate-derivative}). We may certainly assume $(\ell_{max}-1)\rho_2d_i\ge 2$. Let
$$A_k=\frac{|Df_t^{S_{k}-S_i-1}(x)|}{\dist(f_t^{S_{k}-S_i-1}(x),\mathcal{C})}, \text{ and }
A_k'=\frac{|Df_t^{S_{k}-S_i-1}(x)|}{|\widetilde{B}(c_k;\eps)|}$$
for $i< k\le j$. Clearly, $A_k\ge A_k'$.
By Proposition~\ref{prop:lemma-lambda} (i), we have
$$\frac{A_j'}{A_k}=|Df_t^{S_j-S_k}(f_t^{S_k+1}(c))|\frac{\dist(f_t^{S_k+1}(c),\Crit)}{|\widetilde{B}(c_j;\eps)|}
\ge \Lambda(\eps)^{j-k}\exp \left(-\ell_{max}\sum_{k\le l<j} d_l \right),$$
which, by (\ref{eqn:deptbdstrong}), implies
\begin{equation}\label{eqn:Aj'vsAk}
\frac{A_j'}{A_k} \ge \Lambda(\eps)^{j-k} e^{-\rho_2 \ell_{max}d_i}.
\end{equation}
Let $\theta=\theta_0/(2e^{\ell_{max}})$. We distinguish two cases.

{\em Case 1.} Assume $\A(x, t, S_j-S_i-1)\ge \theta e^{d_i\cdot\ell(c_i)} \eps^{-1}.$
Then by Lemma~\ref{lemma-local-bounded-distortion}, we have
\begin{equation*}
\sum_{k=i+1}^{j-1} A_k+A_j'\ge \frac{1}{1+\kappa(\eps)}\A(x, t, S_j-S_i-1)\ge \theta e^{d_i\cdot\ell(c_i)} (2\eps)^{-1}.
\end{equation*}
Together with (\ref{eqn:Aj'vsAk}), this implies
$A_j'\ge \theta \exp(\ell(c_i)d_i-\rho_2\ell_{max}d_i)(4\eps)^{-1},$
provided that $\eps>0$ is small enough.
Thus (\ref{eqn:estimate-derivative}) holds in this case.

{\em Case 2.} Assume $\A(x, t, S_j-S_i-1)< \theta e^{\ell(c_i)\cdot d_i} \eps^{-1}.$
In particular we have $S_j-1\le\hS_i$ which implies $\hS_i=S_j-1$.
By maximality of $\hS_i$ we have
$$A_j=A(x, t, S_j-S_i) -A(x, t, \hS_i-S_i) \ge \theta e^{\ell(c_i)\cdot d_i} \eps^{-1}.$$
So (\ref{eqn:estimate-derivative}) holds if $d_j\le \rho_2\ell_{max} d_i$. Assume $d_j> \rho_2\ell_{max} d_i$. By (\ref{eqn:qesp'c'}),
$$q_{\eps'} (f_t^{S_j-S_i-1}(v_i))\le \rho_2 d_i\leq \rho_2 \ell_{max} d_i-2.$$ Thus there exists a constant $\kappa_1>0$ such that
$$\dist(f_t^{S_j-S_i-1}(v_i), f_t^{S_j-S_i-1}(x))\ge \kappa_1 e^{-\rho_2\ell_{max} d_i} |\widetilde{B}(c_j;\eps)|$$
Thus, by (\ref{eqn:derlength}),
\begin{eqnarray*}
|Df_t^{S_j-S_i-1}(x)|\ge e^{-1} \frac{\dist(f_t^{S_j-S_i-1}(v_i), f_t^{S_j-S_i-1}(x))}{|v_i-x|}
\ge\frac{\kappa_1  \exp(\ell(c_i)d_i-\rho_2\ell_{max} d_i)}{D_{c_j}(\eps)}.
\end{eqnarray*}
So the inequality (\ref{eqn:estimate-derivative}) holds.
\end{proof}

Given $c\in\Crit$, we define positive integers $i_1<i_2<\cdots$ in the following way:
$i_1=1$. Once $i_k$ and $S_{i_k}^{(c)}$ are both well-defined, let $i_{k+1}$ be such that
$\tS_{i_k}^{(c)}=S_{i_{k+1}}^{(c)}$. The procedure stops whenever $S_{i_k}^{(c)}$ or $\tS_{i_k}^{(c)}$ is not well-defined.
The positive integers $S_{i_k}, k=1,2,\ldots$ are called {\em free return times} of $f_t(c)$ into $\tB(\eps)$.

\begin{lem}\label{lem:deep2free}
An essential return time is a free return time.
\end{lem}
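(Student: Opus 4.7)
The plan is to argue by contradiction. Suppose $S_n^{(c)}$ is an essential return time that is not a free return time. Since $i_1=1$, we have $n\ge 2$, and from the inductive construction of the sequence $i_1<i_2<\cdots$ there is a (unique) $k$ with $i_k<n<i_{k+1}$. Because $S_{i_{k+1}}^{(c)}=\tS_{i_k}^{(c)}$ is by definition the first return of $f_t(c)$ into $\tB(\eps)$ strictly after $\hS_{i_k}^{(c)}$, the chain of inequalities $S_{i_k}^{(c)}<S_n^{(c)}<\tS_{i_k}^{(c)}$ forces $S_n^{(c)}\le \hS_{i_k}^{(c)}$. (The degenerate case in which the sequence $(i_j)$ terminates at $i_k$ is ruled out: this would require $\tS_{i_k}^{(c)}$ to be undefined, while $S_n^{(c)}$ itself already supplies a return after $\hS_{i_k}^{(c)}$.)

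The strategy is then to exploit this inequality together with the defining property of $\hS_{i_k}^{(c)}$ to make the ratio $P_n^{(c)}/P_{i_k}^{(c)}$ too small to be compatible with the essential-return condition $P_n^{(c)}\ge 3^{n-i_k}P_{i_k}^{(c)}\ge 3 P_{i_k}^{(c)}$. Set $y=f_t^{S_{i_k}^{(c)}+1}(c)$, $x=f_t(y)$, and let $c'$ be the critical point closest to $y$, so that $d_{i_k}^{(c)}=q_\eps(y)$. A short algebraic manipulation rewrites
\[
\frac{P_n^{(c)}}{P_{i_k}^{(c)}}=|Df_t(y)|\cdot \dist(y,\Crit)\cdot \frac{|Df_t^{S_n^{(c)}-S_{i_k}^{(c)}-1}(x)|}{\dist(f_t^{S_n^{(c)}+1}(c),\Crit)}.
\]
The last factor is precisely the final (non-negative) summand in $\A(x,t,S_n^{(c)}-S_{i_k}^{(c)})$ after noting that $f_t^{S_n^{(c)}-S_{i_k}^{(c)}-1}(x)=f_t^{S_n^{(c)}+1}(c)$, so by the defining property of $\hS_{i_k}^{(c)}$ and $S_n^{(c)}\le \hS_{i_k}^{(c)}$ it is bounded above by $\theta_0 e^{(d_{i_k}^{(c)}-1)\ell(c')}\eps^{-1}$.

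For the remaining two factors I would use the normalized form $|f_t(u)-f_t(c')|=|u-c'|^{\ell(c')}$, which gives $|Df_t(y)|\cdot\dist(y,\Crit)\le \ell_{\max}|y-c'|^{\ell(c')}$, together with the depth bound $|y-c'|<e^{-(d_{i_k}^{(c)}-1)}\eps^{1/\ell(c')}$ supplied by $q_\eps(y)=d_{i_k}^{(c)}$, to conclude
\[
|Df_t(y)|\cdot\dist(y,\Crit)<\ell_{\max}\,e^{-(d_{i_k}^{(c)}-1)\ell(c')}\eps.
\]
Multiplying the two estimates, the exponential factors and the powers of $\eps$ cancel exactly, leaving
\[
\frac{P_n^{(c)}}{P_{i_k}^{(c)}}<\ell_{\max}\,\theta_0.
\]

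To close the argument I would invoke the flexibility in Lemma~\ref{lem:theta0}: making $\theta_0>0$ smaller only shrinks the interval $J$ and strengthens the conclusion of that lemma, so we may assume from the outset $\theta_0<3/\ell_{\max}$. Then $P_n^{(c)}/P_{i_k}^{(c)}<3$, contradicting $P_n^{(c)}\ge 3^{n-i_k}P_{i_k}^{(c)}\ge 3P_{i_k}^{(c)}$. The only genuinely delicate step is the bookkeeping that identifies the third factor in the displayed identity with the last term of $\A(x,t,S_n^{(c)}-S_{i_k}^{(c)})$; once the index substitutions $f_t^j(x)=f_t^{S_{i_k}^{(c)}+2+j}(c)$ are chased through, it falls out of the definitions, and the rest of the argument is forced by the choice of constants.
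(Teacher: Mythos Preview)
Your argument is correct and is exactly the computation the paper's one-line proof (``By definition, for any consecutive free return times $S_i<S_j$, we have $P_k<P_i$ for all $i<k<j$'') is suppressing: you unpack the bound $\A(x,t,S_n-S_{i_k})\le \theta_0 e^{(d_{i_k}-1)\ell(c')}\eps^{-1}$ from the definition of $\hS_{i_k}$, multiply against $|Df_t(y)|\cdot\dist(y,\Crit)=\ell(c')|y-c'|^{\ell(c')}$, and observe the cancellation. The paper states the resulting inequality as $P_k<P_i$ rather than your $P_n/P_{i_k}<\ell_{\max}\theta_0<3$, but both versions require (and tacitly use) the freedom to take $\theta_0$ small, which you make explicit.

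One small slip: your parenthetical dismissal of the degenerate case is backwards. If the sequence $(i_j)$ terminates at $i_k$, it is \emph{not} because $S_n$ furnishes a return after $\hS_{i_k}$; rather, termination means there is no return after $\hS_{i_k}$, which forces $S_n\le \hS_{i_k}$ directly --- precisely the inequality you want. So the degenerate case is not ``ruled out'' but rather handled by the same computation, and no separate argument is needed.
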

\begin{proof} By definition, for any consecutive free return times $S_i<S_j$, we have
$P_{k}<P_i$ for all $i<k<j$. So $S_k$ is not an essential return time. The lemma follows.
\end{proof}

\begin{lem} \label{lem:deepdepth}
Assume $t\in X_{n,\eps}(C)$. Let $c\in\Crit$. If $1\le i<j\le n$ are such that $S_i^{(c)}<S_j^{(c)}$ are consecutive essential return times of $f_t(c)$ into $\tB(\eps)$, then
\begin{equation}\label{eqn:dkij}
\sum_{i<k<j} d_k^{(c)} \le \rho d_i^{(c)},
\end{equation}
and
\begin{equation}\label{eqn:QjA}
 p_j^{(c)}\ge d_j^{(c)}-\rho d_i^{(c)}.
\end{equation}
Moreover, if $n_0$ is the largest integer in $\{1,2,\ldots, n\}$ such that $S_{n_0}^{(c)}$ is an essential return time of $f_t(c)$ into $\tB(\eps)$, then
\begin{equation}\label{eqn:dkn0}
\sum_{n_0<k\le n} d_k^{(c)}\le \rho d_{n_0}^{(c)}.
\end{equation}
\end{lem}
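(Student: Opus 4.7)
The plan is to handle the three inequalities separately, relying on Lemma~\ref{lem:deep2free} and on both inequalities of Lemma~\ref{lemma-estimate-depth} as the main tools.

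For~(\ref{eqn:dkij}), I will enumerate the free return indices $i = k_0 < k_1 < \cdots < k_r = j$ in $[i,j]$. By Lemma~\ref{lem:deep2free} both endpoints lie in this list, while each intermediate $k_s$ (for $0<s<r$) is a free but non-essential return. Applying inequality~(\ref{equation-depth-bound}) of Lemma~\ref{lemma-estimate-depth} to every consecutive pair $(k_s, k_{s+1})$ and summing gives $\sum_{i<k<j}d_k^{(c)} < \rho_1 d_i^{(c)} + (1+\rho_1)T$, where $T := \sum_{0<s<r}d_{k_s}^{(c)}$. The crucial step in bounding $T$ is the observation that the running maximum of $P_l^{(c)}\cdot 3^{-l}$ over $l\le k_s$ must be attained at the most recent essential return, which here is $S_i^{(c)}$; hence $P_{k_s}^{(c)} \le 3^{k_s-i}P_i^{(c)}$ for every $0<s<r$. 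Combined with the telescoped version of~(\ref{eqn:estQ}) from $k_0$ to $k_s$, this forces $d_{k_s}^{(c)} < \rho_1 d_i^{(c)} - (1-\rho_1)T_{s-1}$, where $T_s := \sum_{0<u\le s}d_{k_u}^{(c)}$. An induction then yields $T < \rho_1 d_i^{(c)}/(1-\rho_1)$, and hence $\sum_{i<k<j}d_k^{(c)} < 2\rho_1 d_i^{(c)}/(1-\rho_1) \le \rho d_i^{(c)}$ by the choice $\rho_1 = \rho/4$.

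For~(\ref{eqn:QjA}), telescoping~(\ref{eqn:estQ}) gives $\log(P_j^{(c)}/P_i^{(c)}) > d_j^{(c)} - \rho_1 d_i^{(c)} + (\log 3)(j-i)$, so $P_i^{(c)} < P_j^{(c)}e^{-d_j^{(c)}+\rho_1 d_i^{(c)}}3^{i-j}$. The running-maximum bound $P_m^{(c)} \le 3^{m-i}P_i^{(c)}$ --- which applies to every return in $(i,j)$, free or not --- then combines with this to yield $\sum_{m=1}^{j-1}P_m^{(c)} \le \tfrac{1}{2}P_j^{(c)}e^{-d_j^{(c)}+\rho_1 d_i^{(c)}}$. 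Next I estimate $\A(f_t(c), t, S_j^{(c)})$ by decomposing its defining sum over the segments between consecutive returns and applying Lemma~\ref{lemma-local-bounded-distortion} to each, obtaining $\A \le (1+\kappa(\eps)/2)\sum_m P_m^{(c)} + (\kappa(\eps)/2)P_j^{(c)}e^{-d_j^{(c)}}$, which for $\eps$ small enough is at most $P_j^{(c)}e^{-d_j^{(c)}+\rho_1 d_i^{(c)}}$. Therefore $p_j^{(c)} = \log(P_j^{(c)}/\A) \ge d_j^{(c)} - \rho_1 d_i^{(c)} \ge d_j^{(c)} - \rho d_i^{(c)}$, since $\rho_1 < \rho$.

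For~(\ref{eqn:dkn0}), let $n_0 = k_0 < k_1 < \cdots < k_{r'}$ be the free returns in $[n_0, n]$; by the maximality of $n_0$, each $k_s$ with $s>0$ is non-essential. Apply~(\ref{equation-depth-bound}) to each consecutive pair of these, and once more with $i = k_{r'}$ to bound $\sum_{k_{r'}<k\le n}d_k^{(c)} < \rho_1 d_{k_{r'}}^{(c)}$ --- all returns in $(k_{r'}, n]$ lie in the binding period of $k_{r'}$, since the next free return (if it exists) exceeds $n$. The same induction as in~(\ref{eqn:dkij}), now with $d_{k_{r'}}^{(c)}$ included in the running sum $T$, gives the desired estimate. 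I expect the main obstacle to be establishing the running-maximum observation and using it to close the recursion $T_s < \rho_1 T_{s-1} + \rho_1 d_i^{(c)}$ into a geometric series; without that observation, a naive iteration of~(\ref{eqn:estQ}) bounds each $d_{k_s}^{(c)}$ only by $\rho_1$ times the maximum of earlier depths, which is insufficient for summing to a constant multiple of $d_i^{(c)}$.
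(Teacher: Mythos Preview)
Your proposal is correct and follows essentially the same route as the paper: enumerate the free return indices between consecutive essential returns, telescope inequality~(\ref{eqn:estQ}), invoke the running-maximum characterization of essential returns to bound $P_{k_s}^{(c)}\le 3^{k_s-i}P_i^{(c)}$, then combine with~(\ref{equation-depth-bound}) and Lemma~\ref{lemma-local-bounded-distortion}. The only cosmetic difference is that the paper telescopes once (to the last intermediate free return $k_{m-1}$) and bounds $\sum_{l} d_{k_l}^{(c)}$ in a single stroke, whereas you telescope to each $k_s$ and close the recursion $T_s<\rho_1 T_{s-1}+\rho_1 d_i^{(c)}$ inductively---both yield the same bound $T<\rho_1 d_i^{(c)}/(1-\rho_1)$; one small point to tighten is that for~(\ref{eqn:QjA}) you need $P_m^{(c)}\le 3^{m-i}P_i^{(c)}$ for all $1\le m<j$, and for $m\le i$ this comes directly from $S_i^{(c)}$ being essential rather than from the running-maximum argument.
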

\begin{proof}
By Lemma~\ref{lem:deep2free}, $S_i^{(c)}$ and $S_j^{(c)}$ are both free return times. Let $i=k_0<k_1<\ldots<k_m=j$ be all the positive integers such that $S_{k_l}^{(c)}$ are free return times.
%
%
Then by Lemma~\ref{lemma-estimate-depth}, for each $0\le l<m$,
we have
\begin{equation}\label{eqn:QKl}
\log \frac{P^{(c)}_{k_{l+1}}}{P^{(c)}_{k_l}}\ge d_{k_{l+1}}^{(c)}-\rho_1 d_{k_{l}}^{(c)}+ (\log 3) (k_{l+1}-k_l),
\end{equation}
and
\begin{equation}\label{eqn:dkl}
\sum_{k_l<k<k_{l+1}} d_k^{(c)}\le \rho_1 d_{k_l}^{(c)}.
\end{equation}
Summing up both sides of (\ref{eqn:QKl}) for $0\le l<m-1$, we obtain
$$\log \frac{P^{(c)}_{k_{m-1}}}{P^{(c)}_{k_0}}
\ge d_{k_{m-1}}^{(c)}+(1-\rho_1) d_{k_{m-2}}^{(c)}+\cdots+ (1-\rho_1) d_{k_1}^{(c)}-\rho_1 d_{k_0}^{(c)}
+(\log 3) (k_{m-1}-k_0).$$
Since the left hand side is smaller than $(\log 3) (k_{m-1}-k_0)$, we obtain
$$d_{k_{m-1}}^{(c)}+(1-\rho_1) d_{k_{m-2}}^{(c)}+\cdots+ (1-\rho_1) d_{k_1}^{(c)}\le \rho_1 d_{k_0}^{(c)},$$
which, together with (\ref{eqn:dkl}), implies
$$\sum_{i<k<j} d_k^{(c)} \le\rho_1 d_i^{(c)}+ (1+\rho_1) \rho_1 (1-\rho_1)^{-1} d_i^{(c)}\le \rho d_i^{(c)}.$$
This proves (\ref{eqn:dkij}). Summing up both sides of (\ref{eqn:QKl}) for $0\le l<m$,
$$\log \frac{P^{(c)}_j}{P^{(c)}_i}\ge d^{(c)}_j+ (1-\rho_1) \sum_{l=1}^{m-1} d^{(c)}_{k_l} -\rho_1 d^{(c)}_{k_0} + (\log 3) (j-i)
\ge d^{(c)}_j- \rho_1 d_i^{(c)} + (\log 3) (j-i),$$
which is equivalent to
\begin{equation}\label{eqn:PivsPj}
P^{(c)}_i\le e^{-(d_j^{(c)}-\rho_1 d_i^{(c)})} 3^{i-j} P_j^{(c)}.
\end{equation}

Let us now prove that
\begin{equation}\label{eqn:PkvsPi}
P_k^{(c)}\leq 3^{k-i}P_i^{(c)}\mbox{ for any }1\leq k\leq j-1.
\end{equation}
Indeed, for $1\le k<i$, this inequality follows from that fact that $S_i^{(c)}$ is
an essential return times, while for $i<k<j$, it follows from the fact that $S_k^{(c)}$ is not an essential return time.

Combining (\ref{eqn:PivsPj}) and (\ref{eqn:PkvsPi}), we obtain
$$\sum_{k=1}^{j-1} P_k^{(c)}\le e^{-(d_j^{(c)}-\rho_1 d_i^{(c)})} 2^{-1} P_j^{(c)}.$$
By Lemma~\ref{lemma-local-bounded-distortion}, we have
$$\A(f_t(c), t, S_j)\le (1+\kappa(\eps))\sum_{k=1}^{j-1} P_k^{(c)} +\kappa(\eps)\frac{|Df_t^{S_j}(f_t(c))|}{|\widetilde{B}(c_j;\eps)|},$$
where $c_j$ be the critical point of $f$ which is closest to $f_t^{S_j^{(c)}+1}(c)$ and $\kappa(\eps)\to 0$ as $\eps \to 0$. So when $\eps>0$ is small, we obtain
$$\A(f_t(c), t, S_j)\le e^{-(d_j-\rho_1 d_i)} P_j^{(c)}.$$
The inequality (\ref{eqn:QjA}) follows.

The inequality (\ref{eqn:dkn0}) can be proved in a similar way.
\end{proof}

\begin{proof}[Proof of Proposition~\ref{prop:totaldepth}]
(i) By definition, there exists $c$ such that $\sum_{k=1}^n d_k^{(c)} \ge Cn.$
Let $i_1<i_2<\cdots<i_m$ be all the integers in $\{1,2,\ldots, n\}$ such that
$S_{i_j}$ is an essential return times of $f_t(c)$ into $\tB(\eps)$.  Then $i_1=1$.
For convenience of notations, we regard $i_0=0$ and $d^{(c)}_{i_0}=0$.

By (\ref{eqn:dkij}) and (\ref{eqn:dkn0}) in Lemma~\ref{lem:deepdepth}, we have
$$\sum_{j=1}^m d_{i_j}^{(c)} \ge (1-\rho) \sum_{j=1}^n d_j^{(c)} \ge (1-\rho)Cn.$$
Thus
$$\sum_{j=1}^m \big(d_{i_j}^{(c)} -\rho d_{i_{j-1}}^{(c)}\big) \ge (1-\rho)^2 Cn.$$

By (\ref{eqn:QjA}) in Lemma~\ref{lem:deepdepth},
for each $2\le j\le m$ we have
$$p_{i_j}^{(c)} \ge d_{i_{j}}^{(c)}-\rho d_{i_{j-1}}^{(c)}.$$
By Lemma~\ref{lemma-local-bounded-distortion}, this estimate is also true for $j=1$.
Thus $$\widetilde{p}^{(c)}_{i_j}\ge d_{i_{j}}^{(c)}-\rho d_{i_{j-1}}^{(c)}$$ holds for all $j=1,2,\ldots, m$,
which implies
$$\sum_{j=1}^m \widetilde{p}_{i_j}^{(c)} \ge \sum_{j=1}^m \big(d_{i_j}^{(c)} -\rho d_{i_{j-1}}^{(c)}\big) \ge (1-\rho)^2 Cn.$$
Consequently,
$$\sum\limits_{k\in\widehat{\mathcal{T}}_\ess^{(c)}(C_0,t;\eps),k\leq n}\widetilde{p}^{(c)}_k
\ge \sum\limits_{j=1}^m \widetilde{p}^{(c)}_{i_j} -C_0n
\ge (1-\rho)^2Cn -C_0n=(\gamma C-C_0)n.$$

(ii) By definition, there exists $c, c'\in\Crit$ such that $\dist(f_t^{m+1}(c), c')\le \eps^{1/\ell(c')} (m+1)^{-\tau}$. So there exists $n\ge 1$ such that $m=S_n^{(c)}$ and
$d_n^{(c)}\ge \tau \log (S_n^{(c)}+1)$. Since $d_k^{(c)}< d_n^{(c)}$ holds for each $1\le k<n$, by (\ref{eqn:dkij}) in Lemma~\ref{lem:deepdepth} it follows that
$n$ is an essential return time of $f_t(c)$ into $\tB(\eps)$ and hence
$p^{(c)}_n\ge (1-\rho)d^{(c)}_n>C_0$.
The statement is proved.
\end{proof}

\section{Harvest in the parameter space}\label{sec:harvest}
In this section, we transfer the estimates in phase space to the parameter space and prove the Reduced Main Theorem.
The phase and parameter spaces are related through the maps $\xi_n^{(c)}(t)=f_t^{n+1}(c)$.
In \S~\ref{subsec:parabox}, we define parameter boxes.  In \S~\ref{subsec:conclusion}, we prove the Reduced Main Theorem by showing that the bad parameters are contained in certain families of parameter boxes with large total depth. Proposition~\ref{lemma-sum-bounded} will be used to construct the parameter boxes and Proposition~\ref{prop:totaldepth} will be used to estimate the total depth.  The parameter boxes which we use are always mapped into $\tB(\eps)$ and they form special families of balls. In \S~\ref{subsec:specialfamily}, an abstract lemma about sets of points lying deeply in a special family of balls is proved.

\subsection{Parameter boxes}\label{subsec:parabox}
Recall that
$$\frac{(\xi_n^{(c)}(t))'}{Df_t^n(f_t(c))}=\sum\limits_{j=0}^n\frac{\partial_t F(f_t^j(c),t)}{Df_t^j(f_t(c))}=: M_n^{(c)}(t).$$

\begin{defi}
Given $m\ge 0$, $\lambda>1$ and $c\in\Crit$, we say that
a ball $B(t_0, r)$ in the parameter space is a {\em $\lambda$-bounded $c$-parameter box of order $m$} if
the following hold:
\begin{itemize}
\item $\xi_m^{(c)}: B(t_0, r)\to [0,1]$ is a diffeomorphism onto its image such that
$$\sup_{t, s\in B(t_0, r)}\frac{(\xi_m^{(c)})'(t)}{(\xi_m^{(c)})'(s)}\le \lambda.$$
\item For any $t\in B(t_0,r)$, we have $$\lambda^{-1} |a^{(c)}| \le \left|M_m^{(c)}(t)\right|\le \lambda|a^{(c)}|.$$
\item for each $k=0,1,\ldots, m$, we have
$$\sup_{t,s \in B(t_0,r)}\frac{|Df_t^{k}(f_t(c))|}{|Df_{s}^k (f_{s}(c))|}\le \lambda.$$
\end{itemize}
\end{defi}

The goal of this section is to provide an estimate of the size of a parameter box centered at a given parameter $t_0$.

\begin{pro}\label{prop:parabox}
Given $\lambda>1$, there exist $\theta>0$ and $N\ge 1$ such that the following holds. Let $|t_0|\le \theta$,
$c\in\Crit$ and $m> N$ be such that
$$\sum_{i=0}^m |Df_{t_0}^i(f_{t_0}(c))|^{-1}\le W^{(c)} +\theta,$$
then putting
$$r=\theta /\A(f_{t_0}(c), t_0, m),$$
$B(t_0,r)$ is a $\lambda$-bounded $c$-parameter box of order $m$.
\end{pro}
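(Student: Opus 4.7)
The strategy is to first establish the three defining properties at the center $t_0$, and then propagate them across $B(t_0,r)$ by an inductive distortion argument, exploiting the fact that $r=\theta/\A(f_{t_0}(c),t_0,m)$ is precisely the Koebe-type radius dictated by Lemma~\ref{lem:theta0}. For the center, the first and third box properties are trivial, and only the bound $|M_m^{(c)}(t_0)|\asymp |a^{(c)}|$ requires work. I would split the sum
$M_m^{(c)}(t_0)=\sum_{j=0}^{m}\partial_tF(f_{t_0}^j(c),t_0)/Df_{t_0}^j(f_{t_0}(c))$
at some large but fixed index $N_0=N_0(\lambda)$, chosen so that the tail $\sum_{j>N_0}|Df^j(f(c))|^{-1}$ is small compared to $|a^{(c)}|$. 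The first $N_0$ terms converge to the first $N_0$ terms of $a^{(c)}$ as $\theta\to 0$ by continuity of $F$ and the dynamics on a bounded window. For the tail $N_0<j\le m$, the normalization $|\partial_tF|\le 1$ combined with the hypothesis $\sum_{i=0}^m|Df_{t_0}^i|^{-1}\le W^{(c)}+\theta$ shows that $\sum_{j>N_0}|Df_{t_0}^j|^{-1}$ is small, because the head $\sum_{j=0}^{N_0}|Df_{t_0}^j|^{-1}$ is close to $\sum_{j=0}^{N_0}|Df^j(f(c))|^{-1}$ which is close to $W^{(c)}$. Thus $|M_m^{(c)}(t_0)-a^{(c)}|$ can be made arbitrarily small by choosing $\theta$ small and $N$ large.

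For the propagation, write $y_j(t)=f_t^{j+1}(c)$ and $y_j^0=y_j(t_0)$. I would prove by induction on $k=0,1,\ldots,m$ that for every $t\in B(t_0,r)$: (a) $|y_k(t)-y_k^0|\le\tfrac12\dist(y_k^0,\Crit)$, (b) $e^{-C\theta}\le|Df_t^{k+1}(f_t(c))|/|Df_{t_0}^{k+1}(f_{t_0}(c))|\le e^{C\theta}$, and (c) $|M_k^{(c)}(t)-M_k^{(c)}(t_0)|\le C\theta|a^{(c)}|$. The inductive step uses $(\xi_k^{(c)})'(t)=Df_t^k(f_t(c))M_k^{(c)}(t)$ and the mean value theorem:
\begin{equation*}
|y_k(t)-y_k^0|\le r\cdot 2|Df_{t_0}^k(f_{t_0}(c))|\cdot 2|a^{(c)}|\le 4|a^{(c)}|\theta\,\dist(y_k^0,\Crit),
\end{equation*}
where the last inequality uses $|Df_{t_0}^k(f_{t_0}(c))|/\dist(y_k^0,\Crit)\le\A(f_{t_0}(c),t_0,m)$ (a single term of $\A$). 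This is (a) for $\theta$ small. For (b), telescope
\begin{equation*}
\log\frac{|Df_t^{k+1}(f_t(c))|}{|Df_{t_0}^{k+1}(f_{t_0}(c))|}=\sum_{j=0}^{k}\log\frac{|Df_t(y_j(t))|}{|Df_{t_0}(y_j^0)|},
\end{equation*}
and bound each term by $O(|y_j(t)-y_j^0|/\dist(y_j^0,\Crit))+O(|t-t_0|)$, using the normalization $|g_t(x)-g_t(c')|=|x-c'|^{\ell(c')}$ so that $Dg_t=Dg_{t_0}$ near critical points (killing the $t$-variation there) and standard $C^2$ estimates away from $\Crit$. The first contribution summed over $j$ is bounded by $\A\cdot r\cdot 4|a^{(c)}|=4|a^{(c)}|\theta$ by the calculation above; the second contribution is bounded by $mr$, which by Cauchy--Schwarz applied to the summability hypothesis gives $\A\ge\sum|Df_{t_0}^j|\ge m^2/(W^{(c)}+\theta)$, hence $mr\le(W^{(c)}+\theta)\theta/m$ is small once $m>N$ with $N$ large. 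Estimate (c) is handled in parallel, applying the same ideas term-by-term to $M_k^{(c)}(t)$.

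Given Parts 1 and 2, property (iii) of the parameter box is immediate from (b), property (ii) follows by combining (c) with $M_m^{(c)}(t_0)\approx a^{(c)}$, and property (i) follows from the derivative formula $(\xi_m^{(c)})'(t)=Df_t^m(f_t(c))M_m^{(c)}(t)$: the ratio $(\xi_m^{(c)})'(t)/(\xi_m^{(c)})'(s)$ is within $\lambda$ by (b) and (c), and since $(\xi_m^{(c)})'$ does not change sign on $B(t_0,r)$, the map $\xi_m^{(c)}$ is a diffeomorphism onto its image. The main obstacle is Part~2, namely keeping the cumulative relative error $\sum_j|y_j(t)-y_j^0|/\dist(y_j^0,\Crit)$ bounded uniformly in $m$. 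The resolution is structural: the radius $r=\theta/\A$ is designed so that each orbit perturbation is proportional to the local scale $\dist(y_j^0,\Crit)$, making the cumulative sum telescope into $\theta\cdot\A\cdot r=\theta^2$, a constant independent of $m$. The secondary danger, the $t$-variation giving $m|t-t_0|$, is tamed by the summability hypothesis via the Cauchy--Schwarz bound $\A\ge m^2/(W^{(c)}+\theta)$.
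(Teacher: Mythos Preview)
Your proposal is correct and follows the same route as the paper: the head--tail split for $M_m^{(c)}(t_0)$ is exactly the paper's Lemma~\ref{lem:Mest}, and the propagation of the derivative comparison across $B(t_0,\theta/\A)$ via the cumulative bound $\sum_j|y_j(t)-y_j^0|/\dist(y_j^0,\Crit)\lesssim r\cdot\A=\theta$ is precisely the core computation in the paper's proof (the paper organizes it as a maximal-radius continuity argument---let $r_0$ be the largest radius on which the ratio stays in $[\lambda_1^{-1},\lambda_1]$, then show $r_0\A$ is bounded below---rather than forward induction on $k$, but the estimate is the same). One small difference worth noting: for the pure $t$-variation term that you bound by $mr$ and then control via Cauchy--Schwarz ($\A\ge m^2/(W^{(c)}+\theta)$), the paper instead absorbs each $O(1)$ contribution directly into the $\A$ sum, using that $|D_i^{(c)}(t_0)|\ge (W^{(c)}+\theta)^{-1}$ together with $\dist(\xi_i^{(c)}(t_0),\Crit)\le 1$, so that the constant is dominated by the corresponding summand of $\A$; your Cauchy--Schwarz bound is a clean alternative.
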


Write
$D_n^{(c)}(t)=Df_t^n(f_t(c)).$

\begin{lem}\label{lem:Mest}
Given $\lambda>1$ there exist $\eta=\eta(\lambda)>0$ and an integer $N=N(\lambda)\ge 1$ such that the following holds.
Let $t\in [-\eta,\eta]$, $c\in\Crit$ and let $m> N$ be a positive integer.
Assume
\begin{equation}\label{eqn:sumnot2big}
\sum_{i=0}^m |D_i^{(c)}(t)|^{-1}\le W^{(c)}+\eta.
\end{equation}
Then $$\lambda^{-1} |a^{(c)}|< |M_m^{(c)}(t)|< \lambda |a^{(c)}|.$$
\end{lem}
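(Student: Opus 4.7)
The plan is to show that $M_m^{(c)}(t)$ is in fact close to $a^{(c)}$, so that the conclusion follows from $a^{(c)}\neq 0$. Writing
$$M_m^{(c)}(t)-a^{(c)}=\sum_{j=0}^{N}\!\left(\frac{\partial_t F(f_t^j(c),t)}{Df_t^j(f_t(c))}-\frac{\partial_t F(f^j(c),0)}{Df^j(f(c))}\right)+\sum_{j=N+1}^{m}\frac{\partial_t F(f_t^j(c),t)}{Df_t^j(f_t(c))}-\sum_{j=N+1}^{\infty}\frac{\partial_t F(f^j(c),0)}{Df^j(f(c))},$$
I split the error into three pieces and bound each by a small multiple of $|a^{(c)}|$ (say, each by $\tfrac13 (1-\lambda^{-1})|a^{(c)}|$). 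Using $|\partial_t F|\le 1$ from the normalization, the third piece is at most $\sum_{j>N}|Df^j(f(c))|^{-1}$, which can be made as small as we wish by choosing $N$ large, thanks to (SC).

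Fix such an $N=N(\lambda)$. The first piece is a finite sum of $N+1$ terms that depend continuously on $t$ at $t=0$: indeed in the normalized family the critical points are constant in $t$ and each orbit point $f_t^j(c)$ with $1\le j\le N$ stays bounded away from $\Crit$ for $t$ small (since $f^j(c)\notin\Crit$ by (SC)), so $Df_t^j(f_t(c))$ and $\partial_t F(f_t^j(c),t)$ are joint continuous in $(t,j)$ over $|t|\le \eta$, $0\le j\le N$. By choosing $\eta=\eta(\lambda,N)>0$ small we can make the first piece as small as needed.

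For the middle piece, I use $|\partial_t F|\le 1$ together with the hypothesis to write
$$\left|\sum_{j=N+1}^{m}\frac{\partial_t F(f_t^j(c),t)}{Df_t^j(f_t(c))}\right|\le \sum_{j=N+1}^{m}|Df_t^j(f_t(c))|^{-1}\le \left(W^{(c)}+\eta\right)-\sum_{j=0}^{N}|Df_t^j(f_t(c))|^{-1}.$$
By the same continuity used for the first piece (and shrinking $\eta$ further if needed), the finite sum $\sum_{j=0}^{N}|Df_t^j(f_t(c))|^{-1}$ is within $\eta$ of $\sum_{j=0}^{N}|Df^j(f(c))|^{-1}$, which is within $\eta$ of $W^{(c)}$ by our choice of $N$. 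Hence the middle piece is at most $3\eta$, which is small once $\eta$ is small.

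Combining the three estimates gives $|M_m^{(c)}(t)-a^{(c)}|<(1-\lambda^{-1})|a^{(c)}|$, and therefore $\lambda^{-1}|a^{(c)}|<|M_m^{(c)}(t)|<\lambda|a^{(c)}|$ for every $m>N$ and every $|t|\le\eta$. The main point, and the only nontrivial one, is that the uniform upper bound $W^{(c)}+\eta$ on the tail is genuinely tight once $N$ is large, which is what forces the tail contribution of $M_m^{(c)}(t)$ to be small \emph{uniformly in $m$}; there is no obstacle beyond bookkeeping the three error terms.
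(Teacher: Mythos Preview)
Your proof is correct and follows essentially the same approach as the paper's: choose $N$ large so that the tail of the series defining $a^{(c)}$ is small, use continuity at $t=0$ for the first $N+1$ terms, and use the hypothesis \eqref{eqn:sumnot2big} together with $|\partial_t F|\le 1$ to squeeze the remaining $t$-tail. The paper simply packages your first and third pieces together as the single estimate $|M_N^{(c)}(t)-a^{(c)}|<\delta$, but this is only a cosmetic difference.
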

\begin{proof}
Take $\delta>0$ small. Let $N$ be large such that
$$\sum_{i=0}^N |Df^i(f(c))|^{-1}> W^{(c)}-\delta, \text{ and } \left|M_N^{(c)}(0)-a^{(c)}\right|<\delta.$$
By continuity, there exists $\eta_0>0$ such that for any $t\in [-\eta_0, \eta_0]$, we have
$$\sum_{i=0}^N |D^{(c)}_i(t)|^{-1}> W^{(c)}-\delta, \text{ and } \left|M_N^{(c)}(t)-a^{(c)}\right|<\delta.$$
Now let $\eta=\min (\delta, \eta_0)$. If (\ref{eqn:sumnot2big}) holds, then we have
$$\sum_{i=N+1}^m |D^{(c)}_i(t)|^{-1}<\delta +\eta\le 2\delta.$$
Since $|\partial_t F|\le 1$, it follows that
$$\left|M_m^{(c)}(t)-a^{(c)}\right|\le \left|M_N^{(c)}(t)-a^{(c)}\right|+\sum_{i=N+1}^m |D^{(c)}_i(t)|^{-1} <3\delta.$$
The desired inequality follows since $\min\limits_{c\in\Crit}|a^{(c)}|>0$.
\end{proof}
\begin{proof}[Proof of Proposition~\ref{prop:parabox}]
Fix $\lambda>1$. Let $\lambda_0=\lambda^{1/4}$ and let $\eta=\eta(\lambda_0), N=N(\lambda_0)$ be given by Lemma~\ref{lem:Mest}. Let $\theta\in (0,\eta/2)$ and $\lambda_1\in (1,\lambda_0)$ be such that
$$\lambda_1 (W^{(c)}+\theta) \le W^{(c)}+\eta,$$
holds for each $c\in\Crit$.

Now let $t_0, c, m$ be as in the assumption of this proposition.
Then by continuity, there exists a maximal $r_0\in (0,\theta]$ such that for each $t\in \overline{B(t_0,r_0)}$ and any $1\le j\le m$, we have
\begin{equation}\label{eqn:compD}
\frac{1}{\lambda_1}\le \frac{D_j^{(c)}(t)}{D_j^{(c)}(t_0)} \le \lambda_1.
\end{equation}
So
$$\sum_{i=0}^m |D^{(c)}_i(t)|^{-1}\le \lambda_1\sum_{i=0}^m |D^{(c)}_i(t_0)|^{-1} \le \lambda_1 (W^{(c)}+\theta) \le W^{(c)}+\eta,$$
which implies by Lemma~\ref{lem:Mest} that
$$\lambda_0^{-1} |a^{(c)}| \le |M_m^{(c)}(t)|\le \lambda_0 |a^{(c)}|.$$
It follows that $B(t_0,r_0)$ is a $\lambda$-bounded $c$-parameter box of order $m$. So
it suffices to prove that $\theta_0:=r_0\cdot\A(f_{t_0}(c), t_0, m)$ is bounded away from zero. To this end, we only need to show that
there exists a constant $C>0$ such that for each $1\le j\le m$,
\begin{equation}\label{eqn:relateDtheta}
\left|\log \frac{D_j^{(c)}(t)}{D_j^{(c)}(t_0)}\right| \le C\theta_0.
\end{equation}
Indeed, if $r_0=\theta$ then $\theta_0\ge r_0= \theta$, and if $r_0<\theta$, then by maximality of $r_0$, there exists $t_1\in \overline{B(t_0,r_0)}$ and $j\in \{1,2,\ldots, m\}$ such that
\begin{equation}\label{eqn:extrem}
\text{either } D_{j}^{(c)}(t_1)=\lambda_1 D_j^{(c)}(t_0) \text{ or }D_{j}^{(c)}(t_0)=\lambda_1 D_j^{(c)}(t_1).
\end{equation}
Thus (\ref{eqn:relateDtheta}) implies that
$\theta_0\ge \min\left(\log\lambda_1/C, \theta\right).$

Let us prove (\ref{eqn:relateDtheta}).  First note that there exists $C_1>0$ such that for each $1\le j\le m$ and any $t\in\overline{B(t_0, r_0)}$,
we have $|M_j^{(c)}(t)|\le C_1$, so $$|(\xi_j^{(c)}(t))'|=|M_j^{(c)}(t) D^{(c)}_j(t)|\le C_1 |D^{(c)}_j(t)|.$$
Since $F$ is a normalized regular family, there exists a constant $C_2>0$ such that
$$\left|\frac{\partial^2 F}{\partial t\partial x}(x,t)\right|\le C_2 |Df_t(x)|,$$ 
for all $(x,t)$. (Indeed, for $x$ close to $\Crit$ the left hand side of this inequality is zero.) By (\ref{eqn:compD}), for each $0\le i<m$,
$$\frac{|Df_t(\xi^{(c)}_{i}(t))|}{|Df_{t_0}(\xi^{(c)}_{i}(t_0))|}
=\frac{|D^{(c)}_{i+1}(t)|}{|D^{(c)}_{i+1}(t_0)|}\frac{|D^{(c)}_{i}(t_0)|}{|D^{(c)}_{i}(t)|}\in [\lambda^{-1}, \lambda].$$
By non-flatness of the critical points, it follows that there exist constants $C_3$ and $C_4$ such that
$$\frac{|D^2 f_t(\xi_i^{(c)}(t))|}{|Df_t(\xi^{(c)}_{i}(t))|}
\le C_3\frac{|D^2 f_{t_0}(\xi_i^{(c)}(t_0))|}{|Df_{t_0}(\xi^{(c)}_{i}(t_0))|}\le \frac{C_4}{\dist(\xi_{i}^{(c)}(t_0), \Crit)}.$$
Since $\dist(\xi_{i}^{(c)}(t_0), \Crit)\le 1$,
and $|D_i^{(c)}(t)|\geq \lambda_1^{-1} |D_i^{(c)}(t_0)|$ is bounded away from zero,
there exists a constant $C_5>0$ such that
\begin{multline*}
\left|\frac{d Df_t(\xi_i^{(c)}(t))}{dt}\right|=\left|\frac{\partial^2 F}{\partial t\partial x} (\xi_i^{(c)}(t),t)+ D^2f_t(\xi_i^{(c)}(t)) (\xi_i^{(c)}(t))'\right|
\\\le C_2 |Df_t(\xi_i^{(c)}(t))|+C_4C_1 |Df_t(\xi_i^{(c)}(t)|\frac{|D^{(c)}_i(t)|} {\dist(\xi_{i}^{(c)}(t_0), \Crit)}\le C_5 |Df_t(\xi_i^{(c)}(t)|\frac{|D^{(c)}_i(t)|}{\dist(\xi_{i}^{(c)}(t_0), \Crit)}. 
\end{multline*}
Thus
\begin{multline*}
\left|\frac{ d \log |D_j^{(c)}(t)|}{dt}\right|= \left|\sum_{i=0}^{j-1} \frac{d Df_t(\xi_i^{(c)}(t))/dt}{Df_t(\xi_i^{(c)}(t))}\right|
\\
\le C_5\sum_{i=0}^{j-1}\frac{|D_i^{(c)}(t)|}{\dist(\xi_{i}^{(c)}(t_0), \Crit)}
\le C\A(f_{t_0}(c), t_0, j),
\end{multline*}
where $C=C_5\lambda_1$.
Since
$$\log \frac{D^{(c)}_j (t)}{D^{(c)}_j(t_0)}=\int_{t_0}^t  d \log |D_j^{(c)}(t)|,$$
the inequality (\ref{eqn:relateDtheta}) follows.
\end{proof}

\subsection{Special family of balls}\label{subsec:specialfamily}
Given $B=B(a,r)$ and $x\in\R$, we define
$$ \dep (x|B) =
   \left\{
  \begin{array}{rl}
    \inf\{k\in \mathbb{N}: |x-a|\ge e^{-k} r\}, & \hbox{if $|x-a|< e^{-2}r$;} \\
    0, & \hbox{otherwise.}
  \end{array}
\right.  $$
Moreover, for each $k\in\mathbb{Z}$, let
\begin{equation}
B^{(k)}=B(a, e^{-k}r).
\end{equation}

A finite family $\mathcal{M}=\{B_i=B(a_i,r_i)\}_{i\in\mathcal{I}}$ is called {\em special} if the following holds:
For any $i, j\in\mathcal{I}$, if $a_i\in B_j^{(1)}$ then there exists $k=k(i,j)\ge 1$ such that
$B_i\subset B_j^{(k-1)}\setminus B_j^{(k+1)}.$
In particular, the centers $a_i, i\in\mathcal{I}$ are pairwise distinct.

Given a special family as above, define
$$\mathcal{I}_0=\{i\in\mathcal{I}: \text{ for any } j\in\mathcal{I}, j\not=i, \text{ we have }a_i\not\in B_j^{(1)}\},$$
and for each $k\ge 1$, let
$$\mathcal{I}_k=\left\{i\in \mathcal{I}\setminus \bigcup\limits_{m=0}^{k-1}\mathcal{I}_{m}: \text{ for any } j\in \mathcal{I}\setminus \bigcup\limits_{m=0}^{k-1}\mathcal{I}_{m}, j\not=i, \text{ we have }a_i\not\in B_j^{(1)}\right\}.$$
The minimal integer $n\ge 0$ for which $\mathcal{I}_n=\emptyset$ is called the {\em height} of $\mathcal{M}$.
The support of $\mathcal{M}$ is defined as the union of all the elements of $\mathcal{M}$.

We shall use the next lemma to estimate measure of sets of bad parameters.
\begin{lem}\label{lem:measureest}
For each $0<\kappa<1$ there exists $K=K(\kappa)>1$ such that if $\mathcal{M}=\{B_i\}_{i\in\mathcal{I}}$ is a special family of height at most $n$ and
$$X_{\mathcal{M}}(N)=\left\{x\in \supp(\mathcal{M}): \sum_{i\in\mathcal{I}} \dep (x| B_i)\ge N\right\}, N=0,1,\ldots,$$
then
$$\left|X_{\mathcal{M}}(N)\right|\le K^n e^{-(1-\kappa)N} \left|\supp (\mathcal{M})\right|.$$
\end{lem}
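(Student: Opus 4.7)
The plan is to prove the lemma by induction on the height $n$ of the special family $\mathcal{M}$. The base case $n=0$ is vacuous (empty family). For the inductive step, the first step is to establish two structural facts about $\mathcal{I}_k$ that make the induction work.

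First I would show that for each level $k$, the shrunken balls $\{B_i^{(2)}\}_{i\in \mathcal{I}_k}$ are pairwise disjoint. The reason is that distinct $i, j \in \mathcal{I}_k$ satisfy $a_i \notin B_j^{(1)}$ and $a_j \notin B_i^{(1)}$ (by the very definition of $\mathcal{I}_k$ applied in the sub-family $\mathcal{I}\setminus\bigcup_{l<k}\mathcal{I}_l$), giving $|a_i-a_j|\ge e^{-1}\max(r_i,r_j) > 2e^{-2}\max(r_i,r_j)\ge e^{-2}r_i+e^{-2}r_j$. A consequence is that for each $x\in\supp(\mathcal{M})$ and each level $k$, at most one ball in $\mathcal{I}_k$ contributes positive depth at $x$; denote it $B_{i_k(x)}$ when it exists. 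A Vitali-style argument using the disjoint interior balls $B(a_i,e^{-1}r_i/2)\subset\supp(\mathcal{M})$ then yields $\sum_{i\in\mathcal{I}_k}|B_i|\le 2e\,|\supp(\mathcal{M})|$. Second, I would establish the nesting principle: if $x\in B_{i_0}^{(2)}$ with $i_0\in\mathcal{I}_0$ and $\dep(x|B_j)>0$ for some $j\ne i_0$, then $B_j\subset B_{i_0}$; otherwise $r_j>r_{i_0}$ and $a_{i_0}\in B_j^{(1)}$ would contradict $i_0\in\mathcal{I}_0$.

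With these tools in hand, I would prefer to prove the sharper exponential-moment bound
$$\int_{\supp(\mathcal{M})} e^{s\sum_i \dep(x|B_i)}\,dx \le C(s)^n\,|\supp(\mathcal{M})|\qquad (0<s<1),$$
since Markov's inequality then yields the lemma with $K(\kappa)=C(1-\kappa)$. This is proved by induction on $n$. Decompose $\supp(\mathcal{M}) = \big(\supp(\mathcal{M})\setminus \bigcup_{i_0\in\mathcal{I}_0}B_{i_0}^{(2)}\big)\sqcup\bigsqcup_{i_0\in\mathcal{I}_0}B_{i_0}^{(2)}$. On the first piece, $\sum_i \dep(x|B_i)$ equals the sum over the sub-family $\mathcal{M}\setminus\mathcal{M}_0$, which has height $\le n-1$, and the inductive hypothesis applies directly. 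On $B_{i_0}^{(2)}$, by the nesting principle the sum splits as $\dep(x|B_{i_0})+\sum_{j\in\mathcal{M}_{i_0}^*}\dep(x|B_j)$, where $\mathcal{M}_{i_0}^*=\{B_j\in\mathcal{M}\setminus\{B_{i_0}\}: B_j\subset B_{i_0}\}$ is again a special family of height $\le n-1$ and supported in $B_{i_0}$. Decomposing $B_{i_0}^{(2)}$ into shells $B_{i_0}^{(k-1)}\setminus B_{i_0}^{(k)}$ turns the factor $e^{s\dep(x|B_{i_0})}$ into $e^{sk}$; since shell $k$ has measure at most $e\cdot e^{-k}|B_{i_0}|$, the geometric series $\sum_k e^{(s-1)k}$ is summable for $s<1$, and combining with the shell-restricted inductive bound on $\mathcal{M}_{i_0}^*$ yields a bound of the form $C(s)\cdot C(s)^{n-1}|B_{i_0}|$. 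Summing over $i_0\in\mathcal{I}_0$ via the Vitali estimate gives $C(s)^n|\supp(\mathcal{M})|$.

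The main obstacle is the shell step: applied naively, the global inductive bound $\int e^{sD_{i_0}^*(x)}\,dx\le C(s)^{n-1}|B_{i_0}|$ is independent of $k$ and, when multiplied by $\sum_k e^{sk}$, diverges. To overcome this one must obtain a shell-local version of the moment bound, which is where the "special" property is used crucially: by that property, every $B_j\in\mathcal{M}_{i_0}^*$ with $a_j\in B_{i_0}^{(1)}$ sits in a precisely determined shell $B_{i_0}^{(m-1)}\setminus B_{i_0}^{(m+1)}$ of $B_{i_0}$ with $r_j$ comparable to $e^{-m}r_{i_0}$, so the balls in $\mathcal{M}_{i_0}^*$ that can touch the shell $B_{i_0}^{(k-1)}\setminus B_{i_0}^{(k)}$ have total support of measure $O(|B_{i_0}^{(k-1)}|)$. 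This yields a shell-localized version of the inductive hypothesis, after which the geometric-series computation closes cleanly with $C(s)$ depending only on $s$.
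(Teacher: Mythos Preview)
Your approach is essentially the paper's: induction on height, peel off $\mathcal{I}_0$, use disjointness of $\{B_i^{(2)}\}_{i\in\mathcal{I}_0}$, and invoke the special property to localize the remaining balls into shells. The paper works directly with level sets --- setting $q_0(x)=\sum_{i\in\mathcal{I}_0}\dep(x|B_i)$ and $q'(x)=\sum_{i\in\mathcal{I}'}\dep(x|B_i)$, it bounds $|\{x:q_0(x)=q_0,\,q'(x)\ge q'\}|$ by applying the inductive hypothesis to $\mathcal{M}'_{q_0}=\{B_{i'}:i'\in\mathcal{I}',\,B_{i'}^{(2)}\cap V(q_0)\ne\emptyset\}$ (your shell-restricted family, aggregated over all $i_0\in\mathcal{I}_0$) and then sums a geometric series in $q_0$ --- whereas you repackage the same computation as an exponential moment followed by Markov. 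Both produce $K\asymp(1-e^{-\kappa})^{-1}$.

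Two places to tighten. Your one-line justification of the nesting principle (``otherwise $r_j>r_{i_0}$ and $a_{i_0}\in B_j^{(1)}$'') is not a valid derivation; the correct route is to combine $|a_{i_0}-a_j|\ge e^{-1}r_j$ (from $i_0\in\mathcal{I}_0$) with $|a_{i_0}-a_j|<e^{-2}(r_{i_0}+r_j)$ (from $x\in B_{i_0}^{(2)}\cap B_j^{(2)}$) to get $r_j\le r_{i_0}/(e-1)$, hence $a_j\in B_{i_0}^{(1)}$, and then the special property itself gives $B_j\subset B_{i_0}$. Second, with your definition $\mathcal{M}_{i_0}^*=\{B_j\subset B_{i_0},\,j\ne i_0\}$ the height need not drop to $n-1$, since balls from $\mathcal{I}_0$ may be present; however, because the $B_j^{(2)}$ for $j\in\mathcal{I}_0\setminus\{i_0\}$ are disjoint from $B_{i_0}^{(2)}$, only indices $j\in\mathcal{I}'$ actually contribute depth on $B_{i_0}^{(2)}$, and any subfamily of $\{B_j:j\in\mathcal{I}'\}$ does have height $\le n-1$.
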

\begin{proof}
Fix $0<\kappa<1$ and let $K=K(\kappa)= e^{5}/(1-e^{-\kappa})$. We shall prove the lemma by induction on the height $n$.
We take the trivial case $n=0$ for the starting step.
Now let $n_0$ be a positive integer and assume that the lemma holds for $n<n_0$.
Let us consider the case $n=n_0$.
Let $\mathcal{I}_0$ be defined as above, and let $\mathcal{I}'=\mathcal{I}\setminus \mathcal{I}_0$.
Let $$q_0(x)=\sum_{i\in \mathcal{I}_0} \dep (x|B_i),\,\,  q'(x)=\sum_{i'\in\mathcal{I}'} \dep (x|B_{i'}).$$
For each $q_0\ge 0$, and $q'\ge 0$, let $V(q_0)=\{x\in \supp(\mathcal{M}): q_0(x)=q_0\}$ and
$U(q_0, q')=\{x\in V(q_0): q'(x)\ge q'\}.$
Let us prove that
\begin{equation}\label{eqn:Xq0q'}
\left|U(q_0, q')\right|\le e^{-q_0+5} K^{n-1} e^{-(1-\kappa) q'} \left|\supp(\mathcal{M})\right|.
\end{equation}

To this end, we first note that the balls $B_i^{(2)}$, $i\in\mathcal{I}_0$, are pairwise disjoint. Thus
\begin{equation}\label{eqn:disVq0}
V(q_0)\subset \bigcup_{i\in\mathcal{I}_0} B_i^{(q_0-1)},
\end{equation}
and
for each $k=0,1,\ldots,$ we have
\begin{equation}\label{eqn:Vq0}
\sum_{i\in\mathcal{I}_0} |B_i^{(k)}|\le e^{-k+2} |\supp (\mathcal{M})|.
\end{equation}
In particular, $|V(q_0)|\le e^{-q_0+3}|\supp (\mathcal{M})|$, so
 the inequality (\ref{eqn:Xq0q'}) holds when $q'=0$. Assume now that $q'>0$ and 
let
$$\mathcal{M}'_{q_0}=\{B_{i'}: i'\in\mathcal{I}', B_{i'}^{(2)}\cap V(q_0)\not=\emptyset\}.$$
Then $\mathcal{M}'_{q_0}$ is a special family of height $<n$ and $U(q_0,q')\subset X_{\mathcal{M}_{q_0}'}(q')$. By the induction hypothesis, we have
\begin{equation}\label{eqn:Uq0q'}
\left|U(q_0,q')\right|\le \left|X_{\mathcal{M}'_{q_0}}(q')\right|\le K^{n-1} e^{-(1-\kappa) q'} \left|\supp (\mathcal{M}'_{q_0})\right|.
\end{equation}
Next, let us show
\begin{equation}\label{eqn:suppM'}
\supp (\mathcal{M}'_{q_0}) \subset \bigcup_{i\in\mathcal{I}_0} B_i^{(q_0-3)}.
\end{equation}
In fact, since $\mathcal{M}'_{q_0}\subset \mathcal{M}$,  (\ref{eqn:suppM'}) holds when $q_0\le 3$. Assume $q_0>3$.  By (\ref{eqn:disVq0}), for each $B_{i'}\in\mathcal{M}'_{q_0}$ there exists $i\in\mathcal{I}_0$ such that $B_{i'}^{(2)}\cap B_i^{(q_0-1)}\not=\emptyset$. By definition of special family, we have $B_{i'}\subset B_i^{(q_0-3)}$. Thus (\ref{eqn:suppM'}) holds.
By (\ref{eqn:Vq0}), it follows that
$$\big| \supp (\mathcal{M}'_{q_0}) \big|\le e^{-q_0+5}|\supp (\mathcal{M})|,$$
which, together with (\ref{eqn:Uq0q'}), implies (\ref{eqn:Xq0q'}).


Now let us complete the induction step. Fix $N\ge 0$ and
for each $q_0\ge 0$, let $q_0'=\max(N-q_0, 0)$. So $q_0+q_0'\ge N$. Since
$$X_{\mathcal{M}}(N)\subset \bigcup_{q_0=0}^\infty U(q_0,q_0'),$$
by (\ref{eqn:Xq0q'}), we obtain
\begin{eqnarray*}
|X_{\mathcal{M}}(N)|\le \sum_{q_0= 0}^\infty  |U(q_0,q_0')|&\le& K^{n-1}e^5 |\supp(\mathcal{M})| \sum_{q_0=0}^{\infty} e^{-\kappa q_0} e^{-(1-\kappa)N}\\
&=& K^n e^{-(1-\kappa)N} |\supp(\mathcal{M})|.
\end{eqnarray*}
This completes the proof.
\end{proof}

\subsection{Proof of the Reduced Main Theorem}\label{subsec:conclusion}
For $c\in\Crit$ and $m\ge 0$, let $\sC_m^{(c)}$ denote the set of parameters $t\in [0,1]$ for which the following hold:
\begin{itemize}
\item $f_t^{m+1}(c)\in\Crit$;
\item $f_t^j(c)\cap \Crit=\emptyset$ for all $1\le j\le m$.
\end{itemize}
A $c$-parameter-box $B(t,r)$ of order $m$ is called {\em pre-critical} if $t\in\sC_m^{(c)}$.

For $m\ge 0$, $c\in \Crit$, $t_*\in \sC_m^{(c)}$ and $\lambda>1$, let
$r_{\lambda}(t_*,\eps)$ be the maximal number $r$ which satisfy the following properties:
\begin{enumerate}
\item[(i)] $r\in (0,\eps]$ and $\xi^{(c)}_m(B(t_*, r))\subset \tB(\eps)$;
\item[(ii)] $B(t_*,r)$ is a $\lambda$-bounded pre-critical $c$-parameter boxes of order $m$.
\end{enumerate}
Given a positive integer $n$,
let $$\mathcal{M}_{n,\lambda}^{(c)}(\eps)=\left\{B(t_*, r_{\lambda}(t_*,\eps))\mid
\begin{matrix}
t_* \in \sC_m^{(c)}\text{ for some } m\ge 0
\text{ and } \\\text{ there exists } t\in B(t_*, r_\lambda(t_*,\eps))\text{ such that }\\
\#\{0\le j\le m: f_t^{j+1}(c)\in \tB(\eps)\}\le n.
\end{matrix}
\right\}.$$

\begin{lem} \label{lem:parapuzzlecombinatorics}
There exists $\lambda>1$ such that for each $c\in\Crit$, each $n\ge 1$ and each $\eps>0$ small,
$\mathcal{M}_{n,\lambda}^{(c)}(\eps)$ is a special family of height at most $n$.
\end{lem}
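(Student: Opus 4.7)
The plan is to choose $\lambda>1$ large (depending on $\ell_{\max}$ and $\min_{c\in\Crit}|a^{(c)}|$) and $\eps>0$ small so that Proposition~\ref{prop:parabox} applies and the bounded-distortion estimates built into the definition of $r_\lambda$ leave room for the arguments below. For $B_i=B(a_i,r_i)\in\mathcal{M}_{n,\lambda}^{(c)}(\eps)$, write $m_i$ for its order and $c'_i:=\xi_{m_i}^{(c)}(a_i)\in\Crit$, noting $r_i\cdot|(\xi_{m_i}^{(c)})'(a_i)|\asymp|\tB(c'_i;\eps)|$. The first step is to show that if $a_i\in B_j^{(1)}$ with $i\neq j$, then $m_i>m_j$. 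Since $\xi_{m_j}^{(c)}$ is a bounded-distortion diffeomorphism on $B_j$ with $a_j$ its unique preimage of $c'_j$, and for $\eps$ small the image $\xi_{m_j}^{(c)}(B_j)$ lies inside the single component $\tB(c'_j;\eps)$ (the components of $\tB(\eps)$ being pairwise well-separated), we have $\xi_{m_j}^{(c)}(a_i)\in\tB(\eps)\setminus\Crit$, so $m_i\neq m_j$. To rule out $m_i<m_j$, one combines $\xi_k^{(c)}(a_j)\notin\Crit$ for $k<m_j$ with the $\lambda$-bounded distortion of $\xi_k^{(c)}$ on $B_j$ at each intermediate time (together with the shrinking of $r_j$ forced by the growth of $|(\xi_{m_j}^{(c)})'(a_j)|$ under summability), which keeps $\xi_k^{(c)}(a_i)$ close enough to $\xi_k^{(c)}(a_j)$ to avoid $\Crit$.

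For the specialness condition, set $k=\dep(a_i|B_j)\geq 1$ so that $|a_i-a_j|\asymp e^{-k}r_j$, and I will show $r_i\leq|a_i-a_j|/2$; an elementary geometric calculation then produces an integer $k(i,j)\geq 1$ with $B_i\subset B_j^{(k(i,j)-1)}\setminus B_j^{(k(i,j)+1)}$. Using the $\lambda$-boundedness of both boxes together with $|M_m^{(c)}|\asymp|a^{(c)}|$ and the chain-rule factorization $D_{m_i}^{(c)}(a_i)=D_{m_j}^{(c)}(a_i)\cdot Df_{a_i}^{m_i-m_j}(\xi_{m_j}^{(c)}(a_i))$, I obtain
\begin{equation*}
\frac{r_i}{r_j}\;\asymp\;\frac{|\tB(c'_i;\eps)|}{|\tB(c'_j;\eps)|}\cdot\frac{1}{\bigl|Df_{a_i}^{m_i-m_j}(\xi_{m_j}^{(c)}(a_i))\bigr|}.
\end{equation*}
The first iterate of that derivative is controlled by critical non-flatness, $|Df_{a_i}(\xi_{m_j}^{(c)}(a_i))|\asymp e^{-k(\ell(c'_j)-1)}D_{c'_j}(\eps)$, since $\dist(\xi_{m_j}^{(c)}(a_i),c'_j)\asymp e^{-k}\eps^{1/\ell(c'_j)}$ by bounded distortion on $B_j$. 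The remaining $m_i-m_j-1$ iterates are bounded below by applying Lemma~\ref{lem:Dfnq} (or Proposition~\ref{prop:lemma-lambda}(i) piecewise between returns) to the orbit from $z=\xi_{m_j}^{(c)}(a_i)$, which ends at $c'_i\in\Crit\subset\tB(\eps)$. Plugging these into the displayed ratio yields $r_i\leq|a_i-a_j|/2$ once $\eps$ is small enough that $\Lambda(\eps)$ is sufficiently large.

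For the height bound, consider any nested chain $B_{i_0},B_{i_1},\ldots,B_{i_l}$ with $a_{i_{s-1}}\in B_{i_s}^{(1)}$ for $1\leq s\leq l$. By the previous step, $B_{i_0}\subset B_{i_s}$ for all $s$, and property (i) of $r_\lambda$ then gives $\xi_{m_{i_s}}^{(c)}(t)=f_t^{m_{i_s}+1}(c)\in\tB(\eps)$ for every $t\in B_{i_0}$. Combined with the strict inequalities $m_{i_0}>m_{i_1}>\cdots>m_{i_l}$ from the first step, this produces $l+1$ distinct visits of the $f_t$-orbit of $c$ to $\tB(\eps)$ within the first $m_{i_0}+1$ iterates. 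The defining property of $\mathcal{M}_{n,\lambda}^{(c)}(\eps)$ forces $l+1\leq n$ for some $t\in B_{i_0}$, so every chain has at most $n$ boxes and the height is at most $n$. The principal technical obstacle is the derivative lower bound in the middle step: the small first-iterate factor $e^{-k(\ell(c'_j)-1)}$, together with possible intermediate revisits to $\tB(\eps)$ (each costing $\exp(-\ell_{\max}q_\eps)$ in Lemma~\ref{lem:Dfnq}), must be dominated uniformly in $k$ by the $\Lambda(\eps)$-factors from free returns and by the expansion arising during the "binding period" over which the orbit of $z$ shadows the summable orbit of $c'_j$; making this quantitative, independently of $k$ and of the length $m_i-m_j$, is where the summability hypothesis on $f_0$ is essential.
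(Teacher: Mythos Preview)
Your height argument and the deduction of $m_i>m_j$ are essentially the paper's, but your specialness argument has a genuine gap, and it stems from taking $\lambda$ large rather than close to $1$.

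The obstacle you isolate is real and you do not overcome it: to bound $r_i/|a_i-a_j|$ you need a lower bound on $|Df_{a_i}^{m_i-m_j}(\xi_{m_j}^{(c)}(a_i))|$ that compensates for the small first factor $e^{-k(\ell(c'_j)-1)}$ and for possible deep intermediate returns. Lemma~\ref{lem:Dfnq} only gives this if $\sum q_\eps$ along the orbit is controlled, but the parameter $a_i$ is an arbitrary element of $\sC_{m_i}^{(c)}$ and is not assumed to lie in any $X_{n,\eps}(C)$; the summability of $f_0$ gives no uniform control of these depths for $f_{a_i}$, so your appeal to ``binding periods'' and $\Lambda(\eps)$-growth is not a proof. (Also, your normalization $r_i\cdot|(\xi_{m_i}^{(c)})'(a_i)|\asymp|\tB(c'_i;\eps)|$ is not automatic, since $r_\lambda$ may be capped by the $\lambda$-boundedness constraint rather than by the image landing in $\tB(\eps)$.)

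The paper avoids all of this by choosing $\lambda>1$ \emph{close to $1$} and using the third clause in the definition of a $\lambda$-bounded box for $B_i$ at levels $m_j$ and $m_j+1$ (legitimate since $m_j+1\le m_i$). This gives, for every $t\in B_i$,
\[
\lambda^{-2}\le \frac{|Df_t(\xi_{m_j}^{(c)}(t))|}{|Df_{a_i}(\xi_{m_j}^{(c)}(a_i))|}\le \lambda^{2},
\]
and the normalized non-flatness $|Df_s(x)|=\ell(c'_j)|x-c'_j|^{\ell(c'_j)-1}$ then forces $|\xi_{m_j}^{(c)}(t)-c'_j|/|\xi_{m_j}^{(c)}(a_i)-c'_j|$ to be as close to $1$ as desired. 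Since $D_{m_j+1}^{(c)}$ does not vanish on $B_i$, the interval $\xi_{m_j}^{(c)}(B_i)$ misses $c'_j$; hence $\xi_{m_j}^{(c)}(B_i)$ lies in a thin one-sided annulus about $c'_j$. The bounded distortion of $\xi_{m_j}^{(c)}|B_j$ transfers this back to parameter space, yielding $B_i\subset B_j^{(k-1)}\setminus B_j^{(k+1)}$ for some $k\ge 1$. No lower bound on the long derivative $|Df_{a_i}^{m_i-m_j}(\cdot)|$ is needed. With $\lambda$ large your derivative ratio above is only bounded, not close to $1$, and the annulus is too fat for specialness; so the choice of $\lambda$ matters and the direction you took blocks the simple route.
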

\begin{proof}
Assume that $\lambda>1$ is very close to $1$. To prove that $\mathcal{M}_{n,\lambda}^{(c)}(\eps)$ is special, let $B_i=B(t_i,r_i)$, $i=1,2,$ be distinct parameter boxes in $\mathcal{M}_{n,\lambda}^{(c)}(\eps)$, of order $m_i$, such that $t_1\in B_2^{(1)}$. We need to prove that $|B_1|/|t_1-t_2|$ is small. Let $c_1, c_2\in\Crit$ be such that
$\xi_{m_i}^{(c)}(B_i)\subset \tB(c_i;\eps)$.
Since $f_{t_1}^{j+1}(c)\not\in \Crit$ for each $0\le j\le m_2$, we have $m_1>m_2$.
By the bounded distortion property of $\xi_{m_2}^{(c)}|B_2$, it suffices to show that
$$\sup_{t\in B_1} \frac{|\xi_{m_2}^{(c)}(t)-\xi_{m_2}^{(c)}(t_1)|}{|\xi_{m_2}^{(c)}(t_1)-c_2|}$$ is sufficiently small.
This is clear: for each $t\in B_1$, and for each $0\le k\le m_2+1\le m_1$,
$$\lambda^{-1}|Df_{t_1}^{k}(f_{t_1}(c))|\le |Df_t^{k}(f_t(c))|\le \lambda|Df_{t_1}^{k}(f_{t_1}(c))|,$$
hence
$\lambda^{-1}|Df_{t_1}(\xi_{m_2}^{(c)}(t_1))|\le |Df_t(\xi_{m_2}^{(c)}(t))| \le \lambda |Df_{t_1}(\xi_{m_2}^{(c)}(t_1))|,$
so the statement follows from the local behavior of $f$ near $c_2$.

Let us prove that the height of $\mathcal{M}_{n,\lambda}^{(c)}(\eps)$ does not exceed $n$. Otherwise, there would exist $B_j\in \mathcal{M}_{n,\lambda}^{(c)}(\eps)$, $0\le j\le n$, such that $B_{n}\subsetneq B_{n-1}\subsetneq \cdots \subsetneq B_0$. Let $m_j$ be the order of $B_j$. Then as above, we would have $m_0<m_1<\cdots< m_n$. Then for $t\in B_n$,
$\{0\le j\le m_n: f_t^{j+1}(c)\in \tB(\eps)\}\supset \{m_0, m_1,\ldots, m_n\}$ would contain at least  $n+1$ elements, a contradiction.
%
\end{proof}

Now we fix a constant $\lambda>1$ so that the conclusion of Lemma~\ref{lem:parapuzzlecombinatorics} holds.
As before, we use $S_j^{(c)}(t;\eps)$ to denote the $j$-th return time of $f_t(c)$ into $\tB(\eps)$.
Then by Proposition~\ref{prop:parabox}, we have

\begin{lem} \label{lem:shiftparabox}
There exists a constant $C_0>0$ such that for any $C>0$ the following holds provided that $\eps>0$ is small enough. For $t\in X_{n,\eps}(C)$, $c\in\Crit$ and $1\le j\le n$,  if $\widetilde{p}_j^{(c)}(t;\eps)>C_0$, then there is a pre-critical $c$-parameter box $B(t_*, r)$ of order $S_j^{(c)}(t;\eps)$ in $\mathcal{M}_{n,\lambda}^{(c)}(\eps)$ such that
$$\dep(t|B(t_*, r))\ge\widetilde{p}_j^{(c)}(t;\eps)-C_0.$$
\end{lem}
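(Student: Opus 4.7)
The plan is to construct $t_*$ by ``shifting'' a parameter box centered at $t$ until its image under $\xi_m^{(c)}$ (where $m=S_j^{(c)}(t;\eps)$) hits a critical point, and then quantify how deeply $t$ sits in the maximal $\lambda$-bounded pre-critical box around $t_*$.

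First, I would fix $\lambda_0\in(1,\lambda)$ close to $1$ and take the constants $\theta>0$, $N\ge 1$ supplied by Proposition~\ref{prop:parabox} for this $\lambda_0$. Applying Corollary~\ref{cor:Xnepsbound} at $t$ with this $\theta$ gives $\sum_{i=0}^m|Df_t^i(f_t(c))|^{-1}\le W^{(c)}+\theta$; the short-orbit case $m\le N$ can be absorbed into $C_0$, since only finitely many values of $m$ enter and all relevant quantities stay bounded. When $m>N$, Proposition~\ref{prop:parabox} yields a $\lambda_0$-bounded $c$-parameter box $B(t,r_0)$ of order $m$ with $r_0=\theta/\A(f_t(c),t,m)$.

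Next, set $y:=\xi_m^{(c)}(t)=f_t^{m+1}(c)$ and let $c_j\in\Crit$ be a critical point nearest $y$, so that $|y-c_j|\asymp e^{-d_j^{(c)}(t;\eps)}\eps^{1/\ell(c_j)}$. The distortion bound on $\xi_m^{(c)}|B(t,r_0)$, combined with $|M_m^{(c)}(t)|\asymp |a^{(c)}|$, shows
$$|\xi_m^{(c)}(B(t,r_0))|\asymp r_0\,|a^{(c)}|\,|Df_t^m(f_t(c))|\asymp \theta |a^{(c)}|e^{p_j^{(c)}(t;\eps)}|y-c_j|.$$
Choosing $C_0$ large enough that $\widetilde{p}_j^{(c)}(t;\eps)>C_0$ forces this length to exceed $4|y-c_j|$, the intermediate value theorem gives a unique $t_*\in B(t,r_0)$ with $\xi_m^{(c)}(t_*)=c_j$, and by bounded distortion $|t-t_*|\asymp|y-c_j|/\bigl(|a^{(c)}|\,|Df_t^m(f_t(c))|\bigr)$. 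Since $t\in X_{n,\eps}(C)$ forbids any critical intersection among $f_t^i(c)$ for $1\le i\le m$ (otherwise the corresponding $d_k^{(c)}(t;\eps)$ would be infinite), continuity together with the distortion bounds on $B(t,r_0)$ yields $t_*\in\sC_m^{(c)}$.

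Finally, I would bound $r_\lambda(t_*,\eps)$ from below. A second application of Proposition~\ref{prop:parabox} at $t_*$ (whose summability hypothesis is carried across from $t$ by the distortion on $B(t,r_0)$, after absorbing a factor $\lambda_0$ into a slightly enlarged $\theta$) produces a $\lambda$-bounded pre-critical $c$-parameter box of radius $\asymp \theta/\A(f_{t_*}(c),t_*,m)\asymp \theta/\A(f_t(c),t,m)$. The other constraint in the definition of $r_\lambda(t_*,\eps)$ is $\xi_m^{(c)}(B(t_*,r))\subset\tB(c_j;\eps)$, which via the same distortion estimate becomes $r\lesssim \eps^{1/\ell(c_j)}/\bigl(|a^{(c)}|\,|Df_t^m(f_t(c))|\bigr)$. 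Combining,
$$\frac{r_\lambda(t_*,\eps)}{|t-t_*|}\gtrsim \min\!\left(\frac{\eps^{1/\ell(c_j)}}{|y-c_j|},\ \frac{\theta|a^{(c)}|\,|Df_t^m(f_t(c))|}{\A(f_t(c),t,m)\,|y-c_j|}\right)\asymp \min\!\bigl(e^{d_j^{(c)}(t;\eps)},e^{p_j^{(c)}(t;\eps)}\bigr)=e^{\widetilde{p}_j^{(c)}(t;\eps)},$$
whence $\dep(t|B(t_*,r_\lambda(t_*,\eps)))\ge \widetilde{p}_j^{(c)}(t;\eps)-C_0$ after absorbing implicit constants into $C_0$. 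Membership in $\mathcal{M}_{n,\lambda}^{(c)}(\eps)$ is witnessed by the original $t$, which lies in the box and has exactly $j\le n$ returns of $f_t(c)$ into $\tB(\eps)$ up to time $m$. The main difficulty will be bookkeeping: threading a single small $\theta$ and distortion parameter $\lambda_0<\lambda$ through two applications of Proposition~\ref{prop:parabox}, and cleanly handling the two regimes — deep returns ($d_j^{(c)}<p_j^{(c)}$, where $r_\lambda$ is pinned by the size of $\tB(c_j;\eps)$) versus shallow ones (where it is pinned by the natural radius $\theta/\A$).
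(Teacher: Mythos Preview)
Your proposal is correct and follows essentially the same route as the paper: build a $\lambda$-bounded box $B(t,r_0)$ at $t$ via Proposition~\ref{prop:parabox} and Corollary~\ref{cor:Xnepsbound}, locate $t_*\in\sC_m^{(c)}$ inside it, and then estimate $r_\lambda(t_*,\eps)/|t-t_*|$ from the two competing constraints to obtain depth $\gtrsim\min(d_j,p_j)=\widetilde{p}_j$. The only differences are cosmetic---the paper splits into the cases $r\ge r_*$ and $r<r_*$ rather than writing a $\min$, and it avoids your second application of Proposition~\ref{prop:parabox} by noting that any sub-ball of $B(t,r_0)$ is already $\lambda$-bounded (also, the case $m\le N$ is in fact vacuous for small $\eps$ since $S_1^{(c)}(t;\eps)\to\infty$, and the paper explicitly rules out the possibility $r=\eps$, a constraint you omit but which is indeed dominated by the containment constraint).
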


\begin{proof}
Fix $C$. By Corollary~\ref{cor:Xnepsbound} and Proposition~\ref{prop:parabox},
provided that $\eps>0$ is small enough, there is a $\lambda$-bounded $c$-parameter box $B(t, r_0)$ of order $S_j:=S_j^{(c)}(t,\eps)$, with $r_0\A(f_t(c), t, S_j)=\theta,$ where $\theta>0$ is a constant independent of $C$.
Let $c'$ be the critical point such that $f_t^{S_j+1}(c)\in\tB(c';\eps)$, $p_j=p_j^{(c)}(t,\eps)$ and $d_j=d_j^{(c)}(t,\eps)$.
Assume that $p_j$ and $d_j$ are large.
Since $|(\xi_{S_j}^{(c)}(t))'|\cdot r_0\asymp |Df_t^{S_j} (f_t(c))| \cdot r_0\asymp e^{p_j}\cdot |f_t^{S_j+1}(c)-c'|$,
there exists a $\lambda$-parameter box $B(t_*, r_*)\subset B(t,r_0)$ of order $S_j$ such that
$r_*\asymp r_0$, $t_*\in\sC_{S_j}^{(c)}$ and $\dep(t|B(t_*,r_*))-p_j$ is bounded away from $-\infty$.
Let $r=r_\lambda(t_*,\eps)$. Clearly, $B(t_*, r)\in \mathcal{M}_{n,\lambda}^{(c)}(\eps)$.
If $r\ge r_*$ then $\dep (t|B(t_*, r))\ge \dep (t|B(t_*, r_*))$ and we are done. So assume
$r<r_*$.

We claim that $\partial(\xi^{(c)}_{S_j}(B(t_*, r)))\cap\partial\tB(c',\eps)\neq\emptyset$. Otherwise we would have $r=\eps$. Since $t\in X_{n,\eps}(C)$, by Lemma~\ref{lem:Dfnq}, we would have that $|Df_{t_1}^{S_j}(f_{t_1}(c))|$ were much bigger than $(D_{c'}(\eps))^{-1}$. It would then follow that $\xi^{(c)}_{S_j}(B(t_*,r))\supsetneq \tB(c',\eps)$, contradicting the definition of $r$.

By the bounded distortion property of $\xi^{(c)}_{S_j}|B(t_*,r)$, it follows that $\xi^{(c)}_{S_j}(B(t_*,r))\supset \tB(c',\eps')$ holds for some $\eps'\asymp \eps.$
Since $|f_t^{S_j+1}(c)-c'|\geq e^{-d_j}|\widetilde{B}(c';\eps)|$,
we conclude that $\dep (t|B(t_*,r))-d_j$ is bounded away from $-\infty$. The lemma is proved.
\end{proof}

\begin{proof}[Proof of the Reduced Main Theorem]
(i)  Let $\lambda>1$ and $C_0$ be as above. We may assume $C>8C_0$.
Consider $t\in X_{n,\eps}(C)\setminus X_{n+1,\eps}(C)$ with $\eps>0$ small.
By Proposition~\ref{prop:totaldepth} (i) (taking $\gamma=1/2$) and Lemma~\ref{lem:shiftparabox}, there exists $c\in\Crit$ such that
\begin{equation}\label{eqn:depMn}
\sum_{B\in\mathcal{M}_{n,\lambda}^{(c)}(\eps)} \dep (t|B)\ge \sum_{k\in \widehat{\mathcal{T}}_\ess^{(c)}(C_0,t;\eps), k\le n} \left(\widetilde{p}^{(c)}_k(t;\eps)-C_0\right)\ge  Cn/4.
\end{equation}
By Lemma~\ref{lem:parapuzzlecombinatorics} and Lemma~\ref{lem:measureest} (taking $\kappa=1/2$), it follows that
$$|X_{n,\eps}(C)\setminus X_{n+1,\eps}(C)|\le \sum_{c\in\Crit}K^n e^{-Cn/8}\left|\supp (\mathcal{M}_{n,\lambda}^{(c)}) \right| \le \left(4\eps\# \Crit\right)\cdot K^n e^{-Cn/8},$$
where $K$ is a constant.

(ii) It follows from Corollary~\ref{cor:Xnepsbound} and Lemma~\ref{prop:parabox}.

(iii) Fix $C>0$, $\tau>\tau_0>1$ and $\kappa>0$. Let $\gamma\in (0,1)$ be a constant such that $\gamma^3 \tau >\tau_0$.
By Proposition~\ref{prop:totaldepth}(ii), for each $t\in Y^m_{\eps}(C,\tau)\setminus Y^{m+1}_{\eps}(C,\tau)$, there exist $c\in\Crit$ and
$n\in\widehat{\mathcal{T}}_\ess^{(c)}(C_0,t;\eps)$ such that $m=S_n^{(c)}(t;\eps)$ and
$$p_n^{(c)}(t;\eps) \ge \gamma d_n^{(c)}(t;\eps)\ge \gamma \tau \log (S_n^{(c)}(t;\eps)+1).$$
We may certainly assume that $Y^m_{\eps}(C,\tau)\setminus Y^{m+1}_{\eps}(C,\tau)\not=\emptyset$. Then $$m\ge S(\eps):=\inf\{S_1^{(c)}(t;\eps): |t|\le \eps, c\in\Crit\}$$
is large provided that $\eps>0$ is small. 
By Lemma~\ref{lem:shiftparabox} there exists $t_*\in \sC_m^{(c)}$ such that
$$\dep (t| B(t_*, r_\lambda(t_*,\eps)))\ge \gamma \tau \log (m+1)-C_0\ge \gamma^2 \tau \log (m+1).$$

Since these parameter boxes $B(t_*, r_\lambda(t_*,\eps))$, $t_*\in \sC_m^{(c)}$, are pairwise disjoint, it follows that
$$|Y^m_{\eps}(C,\tau)\setminus Y^{m+1}_{\eps}(C,\tau)|\le C_1 \eps \# \Crit  \left(m+1\right)^{-\gamma^3\tau}\eps\le \sigma \eps (m+1)^{-\tau_0},$$
since $m$ is large.
\end{proof}

\bibliographystyle{alpha}

\begin{thebibliography}{BRLSvS08}
\bibitem[A]{A}
A. Avila.
\newblock Infinitesimal perturbations of rational maps.
\newblock {\em Nonlinearity} \textbf{15} 695每704 (2002).

\bibitem[AM03] {AM03}
A. Avila and C. G. Moreira.
\newblock Statistical poperties of unimodal maps: smooth families with negative Schwarzian devirative.
\newblock {\em Ast\'erisque} \textbf{268} 81-118 (2003)

\bibitem[AM05]{AM05}
A. Avila and C. G. Moreira.
\newblock Statistical properties of unimodal maps: the quadratic family.
\newblock {\em Ann. of Math. (2)} \textbf{161} 831每881 (2005).

\bibitem[BC1] {BC1}
M. Benedicks and L. Carleson.
\newblock On iterations of $1-ax^2$ on $(-1,1)$.
\newblock {\em Ann. Math. (2)} \textbf{122}, 1-24 (1985)

\bibitem[BC2] {BC2}
M. Benedicks and L. Carleson.
\newblock The dynamics of the H\'enon maps.
\newblock {\em Ann. Math. (2)} \textbf{133}, 73-169 (1991)


\bibitem[CE]{CE}
P. Collet and J. P. Eckmann.
\newblock Positive Lyapunov exponents and absolute continuity for maps of the interval.
\newblock {\em Ergod. Th. Dynam. Sys.} \textbf{3}, 13-46 (1983)

\bibitem[J]{J}
M. Jakobson.
\newblock Absolutely continuous invariant measures for one-parameter families of one-dimensional maps.
\newblock {\em Comm. Math. Phys.} \textbf{81}, 39-88 (1981)

\bibitem[Le1]{Le1}
G. Levin.
\newblock On an analytic approach to the Fatou conjecture.
\newblock {\em Fund. Math.} \textbf{171}, 177-196 (2002).

\bibitem[Le2]{Le2}
G. Levin.
\newblock Perturbations of weakly expanding posrcritical orbits.
\newblock {\em Preprint} 2011.

\bibitem[Le3] {Le3}
G. levin.
\newblock Multipliers of periodic orbits in spaces of rational maps.
\newblock {\em Ergod. Th. Dynam. Sys. } \textbf{31}, 197-243 (2011)

\bibitem[Lu]{Lu}
S. Luzzatto.
\newblock {\em Bounded recurrence of critical points and Jakobson's theorem.}
\newblock The Mandelbrot set, theme and variations, 173每210,
London Math. Soc. Lecture Note Ser., \textbf{274}, Cambridge Univ. Press, Cambridge, (2000).

\bibitem[Ly1]{Ly1}
M. Lyubich.
\newblock Dynamics of quadratic polynomials. III. Parapuzzle and SBR measures.
\newblock {\em Ast\'erisque} \textbf{261} 173每200 (2000).

\bibitem[Ly2]{Ly2}
M. Lyubich.
\newblock  Almost every real quadratic map is either regular or stochastic.
\newblock {\em Ann.  Math.} \textbf{156} 1 - 78(2002).


\bibitem[NV] {NV}
T. Nowicki and S. van Strien.
\newblock Invariant measures exist under a summability conditon for unimodal maps
\newblock {\em Invent. Math. } \textbf{105}, 123-136 (1991)

\bibitem[R]{R}
M. Rees.
\newblock Positive measure sets of ergodic rational maps.
\newblock{\em Ann. Sci. \'ecole Norm. Sup. (4)} \textbf{19} 383每407 (1986).

\bibitem[S] {S}
W. Shen.
\newblock On stochastic stability of non-uniformly expanding interval maps.
\newblock {\em ArXiv:1107.2537.}


\bibitem[TTY]{TTY}
P. Thieullen,  C. Tresser and L.-S. Young.
\newblock Positive Lyapunov exponent for generic one-parameter families of unimodal maps.
\newblock {\em J. Anal. Math.} \textbf{64} 121每172 (1994).

\bibitem[T] {T}
M. Tsujii.
\newblock Positive Lyapunov exponent in families of one demensional dynamical systems.
\newblock {\em Invent. Math. } \textbf{111}, 113-137 (1993)

\bibitem[V]{V}
M. Viana.
\newblock Multidimensional nonhyperbolic attractors
\newblock {\em Publ. Math. IHES.} \textbf{85} 63-96 (1997).


\bibitem[WT]{WT}
Q. Wang and H. Takahasi.
\newblock Nonuniformly expanding 1d maps with logarithmic singularities.
\newblock {\em ArXiv:1106.1707.}


\bibitem[Yo]{Yo}
J.C. Yoccoz.
\newblock Jakobson's Theorem.
Manuscript 1997.
\end{thebibliography}

\end{document}